\newtheorem{theorem}{Theorem}[section]
\theoremstyle{plain}
\newtheorem{corollary}[theorem]{Corollary}
\newtheorem{example}{Example}
\newtheorem{lemma}[theorem]{Lemma}
\newtheorem{proposition}[theorem]{Proposition}
\newtheorem{remark}[theorem]{Remark}
\numberwithin{equation}{section}
\begin{document}
\title[Multiple solutions to a nonlinear Choquard equation]{Positive and sign changing solutions to a nonlinear Choquard equation}
\author{M\'{o}nica Clapp}
\address{Instituto de Matem\'{a}ticas, Universidad Nacional Aut\'{o}noma de M\'{e}xico,
Circuito Exterior, C.U., 04510 M\'{e}xico D.F., Mexico.\smallskip}
\email{M\'{o}nica Clapp $<$mclapp@matem.unam.mx$>$}
\author{Dora Salazar}
\email{Dora Salazar $<$docesalo@gmail.com$>$}
\thanks{Research supported by CONACYT grant 129847 and UNAM-DGAPA-PAPIIT grant
IN106612 (Mexico).}
\date{September 24, 2012}

\begin{abstract}
\noindent We consider the problem%
\[
-\Delta u+W(x)u=\left(  \frac{1}{|x|^{\alpha}}\ast|u|^{p}\right)
|u|^{p-2}u,\text{ \ \ }u\in H_{0}^{1}(\Omega),
\]
where $\Omega$ is an exterior domain in $\mathbb{R}^{N}$, $N\geq3,$ $\alpha
\in(0,N)$, $p\in\lbrack2,\frac{2N-\alpha}{N-2}),$ $W\in\emph{C}^{\,0}%
(\mathbb{R}^{N})$, $\inf_{\mathbb{R}^{N}}W>0,$ and $W(x)\rightarrow V_{\infty
}>0$ as $\left\vert x\right\vert \rightarrow\infty$. Under symmetry
assumptions on $\Omega$ and $W,$ which allow finite symmetries, and some
assumptions on the decay of $W$ at infinity, we establish the existence of a
positive solution and multiple sign changing solutions to this problem, having
small energy.\smallskip\newline\textsc{Key words: }Nonlinear Choquard
equation; nonlocal nonlinearity; exterior domain; positive and sign changing
solutions. \noindent\newline\textsc{MSC2010: }Primary 35J91. Secondary 35A01,
35B06, 35J20, 35Q55.

\end{abstract}
\maketitle

\section{Introduction}

We consider the problem
\begin{equation}
\left\{
\begin{array}
[c]{l}%
-\Delta u+\left(  V_{\infty}+V(x)\right)  u=\left(  \frac{1}{|x|^{\alpha}}%
\ast|u|^{p}\right)  |u|^{p-2}u,\\
u\in H_{0}^{1}(\Omega),
\end{array}
\right. \label{prob}%
\end{equation}
where $N\geq3$, $\alpha\in(0,N)$, $p\in\bigl(\frac{2N-\alpha}{N}%
,\frac{2N-\alpha}{N-2}\bigr)$ and $\Omega$ is an unbounded smooth domain in
$\mathbb{R}^{N}$ whose complement $\mathbb{R}^{N}\smallsetminus\Omega$ is
bounded, possibly empty. We also assume that the potential $V_{\infty}+V$ satisfies

\medskip

\noindent$\mathbf{(}V_{0}\mathbf{)}$ $\ \ V\in\mathcal{C}^{0}(\mathbb{R}^{N})
$, $\ V_{\infty}\in(0,\infty)$, $\ \inf_{x\in\mathbb{R}^{N}}\{V_{\infty
}+V(x)\}>0$, $\lim_{|x|\rightarrow\infty}V(x)=0$.

\medskip A special case of (\ref{prob}), relevant in physical applications, is
the Choquard equation
\begin{equation}
-\Delta u+u=\left(  \frac{1}{|x|}\ast|u|^{2}\right)  u,\quad u\in
H^{1}(\mathbb{R}^{3}),\label{choq}%
\end{equation}
which models an electron trapped in its own hole, and was proposed by Choquard
in 1976 as an approximation to Hartree-Fock theory of a one-component plasma
\cite{lb}. This equation arises in many interesting situations related to the
quantum theory of large systems of nonrelativistic bosonic atoms and
molecules, see for example \cite{f,ls} and the references therein. It was also
proposed by Penrose in 1996 as a model for the self-gravitational collapse of
a quantum mechanical wave-function \cite{p}. In this context, problem
(\ref{choq}) is usually called the nonlinear Schr\"{o}dinger-Newton equation,
see also \cite{mp,mt}.

In 1976 Lieb \cite{lb} proved the existence and uniqueness (modulo
translations) of a minimizer to problem (\ref{choq}) by using symmetric
decreasing rearrangement inequalities. Later, in \cite{ln}, Lions showed the
existence of infinitely many radially symmetric solutions to (\ref{choq}).
Further results for related problems may be found in
\cite{a,ccs2,css,m,n,s,ww} and the references therein.

In 2010, Ma and Zhao \cite{mz} considered the generalized Choquard equation
\begin{equation}
-\Delta u+u=\left(  \frac{1}{|x|^{\alpha}}\ast|u|^{p}\right)  |u|^{p-2}u,\quad
u\in H^{1}(\mathbb{R}^{N}),\label{gchoq}%
\end{equation}
and proved that, for $p\geq2$, every positive solution of it is radially
symmetric and monotone decreasing about some point, under the assumption that
a certain set of real numbers, defined in terms of $N,$ $\alpha$ and $p,$ is
nonempty. Under the same assumption, Cingolani, Clapp and Secchi \cite{ccs}
recently gave some existence and multiplicity results in the electromagnetic
case, and established the regularity and some decay asymptotics at infinity of
the ground states of (\ref{gchoq}). Moroz and van Schaftingen \cite{ms}
eliminated this restriction and showed the regularity, positivity and radial
symmetry of the ground states for the optimal range of parameters, and derived
decay asymptotics at infinity for them, as well. These results will play an
important role in our study.

In this article, we are interested in obtaining positive and sign changing
solutions to problem (\ref{prob}). We study the case where both $\Omega$ and
$V$ have some symmetries. If $\Gamma$ is a closed subgroup of the group $O(N)$
of linear isometries of $\mathbb{R}^{N},$ we denote by $\Gamma x:=\{gx:g\in
\Gamma\}$ the $\Gamma$-orbit of $x$, by $\#\Gamma x$ its cardinality, and by
\[
\ell(\Gamma):=\min\{\#\Gamma x:x\in\mathbb{R}^{N}\smallsetminus\{0\}\}.
\]
We assume that $\Omega$ and $V$ are $\Gamma$-invariant, this means that
$\Gamma x\subset\Omega$ for every $x\in\Omega$ and that $V$ is constant on
$\Gamma x$ for each $x\in\mathbb{R}^{N}.$ We consider a continuous group
homomorphism $\phi:\Gamma\rightarrow\mathbb{Z}/2$ and we look for solutions
which satisfy
\begin{equation}
u(gx)=\phi(g)u(x)\text{\hspace{0.3in}for all }g\in\Gamma\text{ and }x\in
\Omega.\label{equi}%
\end{equation}
A function $u$ with this property will be called $\phi$\emph{-equivariant.} We
denote by
\[
G:=\ker\phi.
\]
Note that, if $u$ satisfies (\ref{equi}), then $u$ is $G$-invariant. Moreover,
$u(\gamma x)=-u(x)$ for every $x\in\Omega$ and $\gamma\in\phi^{-1}(-1).$
Therefore, if $\phi$ is an epimorphism (i.e. if it is surjective), every
nontrivial solution to (\ref{prob}) which satisfies (\ref{equi})\ changes
sign. If $\phi\equiv1$ is the trivial homomorphism, then $\Gamma=G$ and
(\ref{equi}) simply says that $u$ is $G$-invariant.

If $Z$ is a $\Gamma$-invariant subset of $\mathbb{R}^{N}$ and $\phi$ is an
epimorphism, the group $\mathbb{Z}/2$ acts on the $G$-orbit space
$Z/G:=\{Gx:x\in Z\}$ of $Z$ as follows: we choose $\gamma\in\Gamma$ such that
$\phi(\gamma)=-1$ and we define
\[
(-1)\cdot Gx:=G(\gamma x)\text{\hspace{0.3in}for all }x\in Z.
\]
This action is well defined and it does not depend on the choice of $\gamma.$
We denote by
\[
\Sigma:=\{x\in\mathbb{R}^{N}:\left\vert x\right\vert =1\text{, }\#\Gamma
x=\ell(\Gamma)\},\text{\qquad}\Sigma_{0}:=\{x\in\Sigma:Gx=G(\gamma x)\}.
\]
If $Z$ is a nonempty $\Gamma$-invariant subset of $\Sigma\smallsetminus
\Sigma_{0}$, the action of $\mathbb{Z}/2$ on its $G$-orbit space $Z/G$ is free
and the \emph{Krasnoselskii genus} of $Z/G$, denoted genus$(Z/G)$, is defined
to be the smallest $k\in\mathbb{N}$ such that there exists a continuous map
$f:Z/G\rightarrow\mathbb{S}^{k-1}:=\{x\in\mathbb{R}^{k}:\left\vert
x\right\vert =1\}$ which is $\mathbb{Z}/2$-equivariant, i.e. $f((-1)\cdot
Gz)=-f(Gz)$ for every $z\in Z.$ We define genus$(\emptyset):=0.$

For each subgroup $K$ of $O(N)$ and each $K$-invariant subset $Z$ of
$\mathbb{R}^{N}\smallsetminus\{0\}$ we set
\[
\mu(Kz):=\left\{
\begin{array}
[c]{ll}%
\inf\{\left\vert gz-hz\right\vert :g,h\in K,\text{ }gz\neq hz\} & \text{if
}\#Kz\geq2,\\
2\left\vert z\right\vert  & \text{if }\#Kz=1,
\end{array}
\right.
\]%
\[
\mu_{K}(Z):=\inf_{z\in Z}\mu(Kz)\qquad\text{and}\qquad\mu^{K}(Z):=\sup_{z\in
Z}\mu(Kz).
\]
In the special case where $K=G$ and $Z=\Sigma$, we simply write
\[
\mu_{G}:=\mu_{G}(\Sigma)\text{\qquad and\qquad}\mu^{G}:=\mu^{G}(\Sigma).
\]
We only consider the case $\ell(\Gamma)<\infty$, because if all $\Gamma
$-orbits of $\Omega$ are infinite it was already shown in \cite[Theorem
1.1]{ccs} that (\ref{prob}) has infinitely many solutions. In this case,
$\mu_{G}>0.$

We denote by $c_{\infty}$ the energy of a ground state of the problem
\[
\left\{
\begin{array}
[c]{l}%
-\Delta u+V_{\infty}u=\left(  \frac{1}{|x|^{\alpha}}\ast|u|^{p}\right)
|u|^{p-2}u,\\
u\in H^{1}(\mathbb{R}^{N}).
\end{array}
\right.
\]
We shall look for solutions with small energy, i.e. which satisfy
\begin{equation}
\frac{p-1}{2p}\int_{\mathbb{R}^{N}}\int_{\mathbb{R}^{N}}\frac{|u(x)|^{p}%
|u(y)|^{p}}{|x-y|^{\alpha}}dx\,dy<\ell(\Gamma)c_{\infty}.\label{ener}%
\end{equation}
In what follows, we assume that $V$ satisfies $(V_{0})$ and we consider two
cases: the case in which $V$ is strictly negative at infinity, and that in
which $V$ takes on nonnegative values at infinity (which includes the case
$V=0$). We shall prove the following results:

\begin{theorem}
\label{slna1}If $p\geq2$, $\Omega$ is $G$-invariant and $V$ is a $G$-invariant
function which satisfies

\noindent$\mathbf{(}V_{1}\mathbf{)}$ There exist $r_{0}>0$, $c_{0}>0$ and
$\lambda\in(0,\mu^{G}\sqrt{V_{\infty}})$ such that
\[
V(x)\leq-c_{0}e^{-\lambda|x|}\text{\hspace{0.3in}for all }\,x\in\mathbb{R}%
^{N}\text{ with }\left\vert x\right\vert \geq r_{0},
\]
then \emph{(\ref{prob})} has at least one positive solution $u$ which is
$G$-invariant and satisfies \emph{(\ref{ener})} with $\Gamma=G.$
\end{theorem}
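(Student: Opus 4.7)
The approach is variational. Since $\phi\equiv 1$ we have $\Gamma=G$, and I look for $G$-invariant critical points of
\[
J(u):=\tfrac12\int_\Omega\bigl(|\nabla u|^2+(V_\infty+V)u^2\bigr)dx-\tfrac{1}{2p}\int_\Omega\int_\Omega\frac{|u(x)|^p|u(y)|^p}{|x-y|^\alpha}dx\,dy
\]
on the $G$-invariant Nehari manifold $\mathcal N^G\subset H_0^1(\Omega)^G$. By Palais' principle of symmetric criticality, such critical points solve (\ref{prob}). I show that $c^G:=\inf_{\mathcal N^G}J$ is attained by combining a compactness threshold with an energy estimate.

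The compactness step asserts that every Palais--Smale sequence $(u_n)\subset H_0^1(\Omega)^G$ for $J$ at a level $c<\ell(G)c_\infty$ is relatively compact. It relies on a Brezis--Lieb type splitting for the nonlocal Choquard nonlinearity in the spirit of \cite{ccs,ms}: after subtracting the weak limit, the residual decomposes into finitely many translated nontrivial profiles, each a solution of the limit equation
\[
-\Delta\omega+V_\infty\omega=\bigl(|x|^{-\alpha}*|\omega|^p\bigr)|\omega|^{p-2}\omega\quad\text{in }\mathbb{R}^N,
\]
and hence carrying energy at least $c_\infty$. The $G$-invariance of $u_n$ forces each escaping profile to appear along an entire $G$-orbit with mutually diverging centers, contributing at least $\ell(G)c_\infty$ in total to $\lim_n J(u_n)$; the assumption $c<\ell(G)c_\infty$ thus rules out any loss of mass and forces strong convergence.

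For the energy estimate, let $\omega$ denote the positive, radial ground state of the limit equation, whose existence and decay $\omega(x)\sim|x|^{(1-N)/2}e^{-\sqrt{V_\infty}|x|}$ at infinity are provided by Moroz--van Schaftingen \cite{ms}. Choose $\xi\in\Sigma$ with $\mu(G\xi)>\lambda/\sqrt{V_\infty}$, which is possible since $\lambda<\mu^G\sqrt{V_\infty}$, and for $R$ large set the $G$-invariant function
\[
\bar\omega_R(x):=\sum_{gG_\xi\in G/G_\xi}\omega(x-Rg\xi),
\]
smoothly truncated near the bounded set $\mathbb{R}^N\smallsetminus\Omega$ so that $\bar\omega_R\in H_0^1(\Omega)^G$, at the cost of only exponentially small perturbations of all relevant integrals. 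Estimating $\max_{t>0}J(t\bar\omega_R)$ as $R\to\infty$: the $\ell(G)$ individual bumps produce $\ell(G)c_\infty+o(1)$; the positive cross terms in the quadratic form decay like $e^{-\sqrt{V_\infty}R\mu(G\xi)}$ by the sharp decay of $\omega$; and hypothesis $(V_1)$ forces $\int V|\bar\omega_R|^2$ to be negative of order $-e^{-\lambda R}$. Since $\lambda<\sqrt{V_\infty}\mu(G\xi)$, the negative $V$-contribution dominates the positive cross interactions, whence $\max_{t>0}J(t\bar\omega_R)<\ell(G)c_\infty$ for $R$ sufficiently large, and therefore $c^G<\ell(G)c_\infty$.

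Standard Nehari manifold analysis now yields a Palais--Smale sequence on $\mathcal N^G$ at level $c^G$ which, by the compactness step, converges to a minimizer $u\in\mathcal N^G$ with $J'(u)=0$. Since $J$ and the Nehari constraint depend on $u$ only through $|u|$, I may assume $u\geq0$, and the strong maximum principle then gives $u>0$ in $\Omega$. The bound (\ref{ener}) with $\Gamma=G$ is equivalent to $J(u)<\ell(G)c_\infty$ for $u\in\mathcal N^G$ (by the Nehari identity), and this follows from $J(u)=c^G<\ell(G)c_\infty$. I expect the main obstacle to be the sharp asymptotic analysis in the energy estimate: balancing the two exponential decay rates arising from the Choquard cross interactions and from $V$ requires careful use of the decay asymptotics of $\omega$ from \cite{ms}, and the strict inequality $\lambda<\mu^G\sqrt{V_\infty}$ in $(V_1)$ is exactly what tips the balance below the compactness threshold.
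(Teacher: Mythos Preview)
Your proposal is correct and shares the paper's two-step strategy: the compactness threshold $c<\ell(G)c_\infty$ for $G$-invariant Palais--Smale sequences (Corollary~\ref{corps}, imported from \cite{ccs}), followed by a symmetric test function pushing $c_{\Omega,V}^G$ strictly below it. The difference is in the test function. The paper (Lemma~\ref{estim1} and Proposition~\ref{estimps}) truncates each translated ground state to a ball of radius $\rho R<\tfrac12 R\mu(Gz)$, so that the $\ell(G)$ bumps have \emph{pairwise disjoint supports}: then $\Vert\theta(z)\Vert_V^2=\ell(G)\Vert v_{R,z}\Vert_V^2$ exactly and $\mathbb D(\theta(z))\geq\ell(G)\mathbb D(v_{R,z})$, so everything reduces to the single-bump estimate $J_V(\pi(v_{R,z}))\leq c_\infty-d_0e^{-\lambda R}$, in which only the truncation error (not any bump interaction) competes with the negative $V$-contribution. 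You instead leave the bumps overlapping, truncating only near $\mathbb{R}^N\smallsetminus\Omega$, and argue that the interaction terms $\langle\omega_{Rg\xi},\omega_{Rh\xi}\rangle=I(R(g\xi-h\xi))$ are $o(e^{-\lambda R})$ because $\lambda<\mu(G\xi)\sqrt{V_\infty}$, so the $V$-term still dominates. Both routes work; the disjoint-support device is cleaner here since it sidesteps interaction estimates entirely, whereas your version is closer in spirit to Section~5, where (for $(V_3)$--$(V_4)$) the interactions themselves are the mechanism that lowers the energy and cannot be discarded. One minor imprecision: the asymptotic $\omega(x)\sim|x|^{(1-N)/2}e^{-\sqrt{V_\infty}|x|}$ you quote is valid only for $p>2$; for $p=2$ the exponent is $Q(|x|)$ (Theorem~\ref{asinf}), but Lemma~\ref{propasin} shows this makes no difference to your comparison of exponential rates.
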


\begin{theorem}
\label{slna2}If $p\geq2$, $\Omega$ is $\Gamma$-invariant, $\phi:\Gamma
\rightarrow\mathbb{Z}/2$ is an epimorphism, $Z$ is a $\Gamma$-invariant subset
of $\Sigma\smallsetminus\Sigma_{0},$ $V$ is a $\Gamma$-invariant function and
the following holds:

\noindent$\mathbf{(}V_{2}\mathbf{)}$ \ There exist $r_{0}>0$, $c_{0}>0$ and
$\lambda\in(0,\mu_{\Gamma}(Z)\sqrt{V_{\infty}})$ such that
\[
V(x)\leq-c_{0}e^{-\lambda|x|}\text{\hspace{0.3in}for all }\,x\in\mathbb{R}%
^{N}\text{ with }\left\vert x\right\vert \geq r_{0},
\]
then problem \emph{(\ref{prob})} has at least \emph{genus}$(Z/G)$ pairs of
sign changing solutions $\pm u$, which satisfy \emph{(\ref{equi})} and
\emph{(\ref{ener})}.
\end{theorem}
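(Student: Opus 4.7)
The plan is to prove Theorem \ref{slna2} by a minimax argument based on the Krasnoselskii genus, applied to the restriction of the energy functional to a Nehari manifold of $\phi$-equivariant functions. Let $J$ be the energy functional associated to (\ref{prob}) on $H_{0}^{1}(\Omega)$, and denote by $H_{0}^{1}(\Omega)^{\phi}$ the closed subspace of $\phi$-equivariant functions. I would work with the Nehari manifold $\mathcal{N}^{\phi}\subset H_{0}^{1}(\Omega)^{\phi}$; by the principle of symmetric criticality, critical points of $J|_{\mathcal{N}^{\phi}}$ yield $\phi$-equivariant solutions to (\ref{prob}), and since $\phi$ is an epimorphism every nontrivial $\phi$-equivariant function changes sign. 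The functional is even, so the natural free $\mathbb{Z}/2$-action $u\mapsto-u$ on $\mathcal{N}^{\phi}$ lets us speak of the genus of symmetric subsets.

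The first main ingredient is a compactness statement: $J|_{\mathcal{N}^{\phi}}$ satisfies the Palais-Smale condition strictly below the level $\ell(\Gamma)c_{\infty}$. The argument is the standard concentration-compactness alternative on exterior domains: a non-converging PS sequence splits off a finite number of ``bubbles,'' each a translate of a ground state of the limit problem on $\mathbb{R}^{N}$. Equivariance forces the bubble centers to cluster along $\Gamma$-orbits, and because $\phi$ is an epimorphism each such orbit contributes at least $\ell(\Gamma)$ bubbles; since every bubble carries energy at least $c_{\infty}$, the desired threshold follows. The sharp decay estimates for ground states from \cite{ms} enter here.

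The heart of the proof is the construction, for $R$ large, of a $\mathbb{Z}/2$-equivariant map from $Z/G$ into the sublevel set $\{J\leq c\}\cap\mathcal{N}^{\phi}$ for some $c<\ell(\Gamma)c_{\infty}$. To $z\in Z$ one associates a test function of the form
\[
\sigma_{R}(z)(x)=\sum_{g\Gamma_{z}\in\Gamma/\Gamma_{z}}\phi(g)\,\omega_{\infty}(x-Rgz),
\]
where $\omega_{\infty}$ is the positive ground state of the limit equation and $\Gamma_{z}$ is the isotropy subgroup of $z$. The condition $z\in\Sigma\smallsetminus\Sigma_{0}$ forces $\Gamma_{z}\subset G$, so the sign $\phi(g)$ descends to cosets and $\sigma_{R}(z)$ is $\phi$-equivariant. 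After truncation to fit inside $\Omega$ and projection onto $\mathcal{N}^{\phi}$ one obtains a continuous $\mathbb{Z}/2$-equivariant map $\iota_{R}\colon Z/G\to\mathcal{N}^{\phi}$. The crucial energy estimate pits the cross interactions between distinct bubbles, which by the exponential decay $\omega_{\infty}(x)\sim e^{-\sqrt{V_{\infty}}|x|}$ are of order $e^{-\mu_{\Gamma}(Z)\sqrt{V_{\infty}}\,R}$, against the negative contribution coming from $V$ at the bubble centers, which by $(V_{2})$ is at most $-c_{0}'e^{-\lambda R}$. Since $\lambda<\mu_{\Gamma}(Z)\sqrt{V_{\infty}}$, the negative term dominates for large $R$ and the total energy lies strictly below $\ell(\Gamma)c_{\infty}$.

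Once these pieces are in place, the standard genus-based deformation argument applied to the even $\mathcal{C}^{1}$ functional $J|_{\mathcal{N}^{\phi}}$ produces at least $\mathrm{genus}(\iota_{R}(Z/G))\geq\mathrm{genus}(Z/G)$ pairs $\pm u$ of critical points below the threshold, each satisfying (\ref{equi}) and (\ref{ener}). I expect the principal technical obstacle to be the construction and energy estimate of $\iota_{R}$: one must carefully balance the cross interactions between distant bubbles, using the sharp asymptotics of $\omega_{\infty}$ from \cite{ms}, against the decay of $V$, while simultaneously ensuring continuity as $z$ ranges over $Z$ (particularly near points whose isotropy jumps).
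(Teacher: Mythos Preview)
Your overall architecture matches the paper's: work on the Nehari manifold $\mathcal{N}_{\Omega,V}^{\phi}$ in $H_0^1(\Omega)^\phi$, invoke the $(PS)_c^\phi$ condition below $\ell(\Gamma)c_\infty$ (which the paper imports from \cite{ccs}), build a $\mathbb{Z}/2$-equivariant test map $Z/G\to\mathcal{N}_{\Omega,V}^{\phi}$ landing strictly below this level, and conclude via the genus.

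The one substantive difference is in how the energy of the test function is estimated. You propose to use untruncated bubbles $\omega(\cdot-Rgz)$ and let the negative potential contribution ($\sim -e^{-\lambda R}$) dominate the cross interactions ($\sim e^{-\mu_\Gamma(Z)\sqrt{V_\infty}\,R}$), which works since $\lambda<\mu_\Gamma(Z)\sqrt{V_\infty}$. The paper instead truncates each bubble to a ball of radius $\rho R$ with $2\rho<\mu_\Gamma(Z)$, so that the functions $v_{R,gz}$ have pairwise disjoint supports. Then $\|\theta(z)\|_V^2=\ell(\Gamma)\|v_{R,z}\|_V^2$ exactly, and because $|\theta(z)|^p=\sum_{gz}v_{R,gz}^p$ pointwise, the nonlocal cross terms in $\mathbb{D}(\theta(z))$ are all nonnegative, giving $\mathbb{D}(\theta(z))\geq\ell(\Gamma)\mathbb{D}(v_{R,z})$. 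This reduces the multi-bubble estimate to the single-bubble Lemma~\ref{estim1} (taken from \cite{ccs}), where only the truncation error competes with the negative potential.

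Your route is viable but forces you to control the sign cancellations in $|\sigma_{Rz}|^p$ (the nonlocal term sees $|u|^p$, and bubbles carrying opposite signs of $\phi$ interfere); the paper's disjoint-support trick eliminates this issue entirely and is the shorter path here. Your interaction-based estimate is, however, precisely the mechanism the paper employs in Section~5 for Theorems~\ref{slna3} and~\ref{slna4}, where $V$ is nonnegative at infinity and there is no dominant negative term, so that the attractive interactions between same-sign bubbles must themselves drive the energy below $\ell(\Gamma)c_\infty$. Finally, your concern about isotropy jumps is unnecessary: $Z\subset\Sigma$ forces every $\Gamma$-orbit in $Z$ to have cardinality exactly $\ell(\Gamma)$, so the sum defining $\sigma_R(z)$ has a constant number of terms and continuity in $z$ is immediate.
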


\begin{theorem}
\label{slna3}If $p\geq2$, $\ell(G)\geq3$, $\Omega$ is $G$-invariant and $V$ is
a $G$-invariant function which satisfies

\noindent$\mathbf{(}V_{3}\mathbf{)}$ There exist $c_{0}>0$ and $\kappa>\mu
_{G}\sqrt{V_{\infty}}$ such that
\[
V(x)\leq c_{0}e^{-\kappa|x|}\hspace{0.3in}\text{for all }\,x\in\mathbb{R}^{N},
\]
then \emph{(\ref{prob})} has at least one positive solution $u$ which is
$G$-invariant and satisfies \emph{(\ref{ener})} with $\Gamma=G.$
\end{theorem}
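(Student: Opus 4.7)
The plan is to find the required positive solution as a minimizer of the energy functional
\[
J(u):=\tfrac{1}{2}\int_{\Omega}\bigl(|\nabla u|^{2}+(V_{\infty}+V)u^{2}\bigr)\,dx-\tfrac{1}{2p}\iint\frac{|u(x)|^{p}|u(y)|^{p}}{|x-y|^{\alpha}}\,dx\,dy
\]
on the $G$-invariant Nehari manifold $\mathcal{N}_{G}^{\Omega}:=\{u\in H_{0}^{1}(\Omega)^{G}\setminus\{0\}:\langle J'(u),u\rangle=0\}$. Setting $c_{G}:=\inf_{\mathcal{N}_{G}^{\Omega}}J$, the strategy rests on a splitting lemma for $G$-invariant Palais--Smale sequences (the nonlocal analogue of the decomposition used in \cite{ccs}, which I take to be available from an earlier section of the paper) asserting that any such sequence at a level below $\ell(G)c_{\infty}$ is relatively compact in $H_{0}^{1}(\Omega)$. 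Once the strict inequality $c_{G}<\ell(G)c_{\infty}$ is known, $c_{G}$ is attained; symmetric criticality promotes the minimizer to a weak solution of \eqref{prob}, and replacing it by $|u|$ (which sits on $\mathcal{N}_{G}^{\Omega}$ with the same energy) together with the strong maximum principle delivers strict positivity.

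The whole difficulty thus concentrates in producing a test function in $\mathcal{N}_{G}^{\Omega}$ whose energy is strictly below $\ell(G)c_{\infty}$. Let $\omega$ be the positive radial ground state of the limit problem on $\mathbb{R}^{N}$, with the sharp decay $\omega(x)\sim c|x|^{-(N-1)/2}e^{-\sqrt{V_{\infty}}|x|}$ from \cite{ms}. Choose $z_{0}\in\Sigma$ realizing $\mu(Gz_{0})=\mu_{G}$, enumerate $Gz_{0}=\{g_{1}z_{0},\dots,g_{n}z_{0}\}$ with $n=\ell(G)\geq 3$, and for $R$ large define
\[
u_{R}(x):=\chi_{R}(x)\sum_{k=1}^{n}\omega(x-R g_{k}z_{0}),
\]
where $\chi_{R}$ is a $G$-invariant cutoff equal to $1$ outside a fixed ball containing $\mathbb{R}^{N}\setminus\Omega$; its contribution is absorbed into the error terms. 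Projecting onto $\mathcal{N}_{G}^{\Omega}$ by scaling yields $t_{R}u_{R}$ with
\[
J(t_{R}u_{R})=\tfrac{p-1}{2p}\,A_{R}^{p/(p-1)}B_{R}^{-1/(p-1)},
\]
where $A_{R}:=\int(|\nabla u_{R}|^{2}+(V_{\infty}+V)u_{R}^{2})\,dx$ and $B_{R}:=\iint|u_{R}(x)|^{p}|u_{R}(y)|^{p}|x-y|^{-\alpha}\,dx\,dy$.

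Using that each $\omega(\cdot-R g_{k}z_{0})$ solves the limit equation, the sharp decay of $\omega$, and the pointwise inequality $(\sum_{k}a_{k})^{p}\geq\sum_{k}a_{k}^{p}$ valid for $a_{k}\geq 0$ and $p\geq 1$, the standard bubble-interaction analysis yields
\begin{align*}
A_{R}&=nA_{\infty}+O\bigl(e^{-\mu_{G}\sqrt{V_{\infty}}R}\bigr)+\int_{\Omega}V\,u_{R}^{2}\,dx,\\
B_{R}&\geq nB_{\infty}+c_{1}R^{-\alpha}+o(R^{-\alpha}),
\end{align*}
where $A_{\infty}=\|\omega\|_{V_{\infty}}^{2}$, $B_{\infty}$ is the Choquard energy of $\omega$, and $c_{1}>0$ comes from pairwise polynomial cross terms $\|\omega^{p}\|_{L^{1}}^{2}|R g_{k}z_{0}-R g_{j}z_{0}|^{-\alpha}$ (for $k\neq j$), which are positive because $\omega>0$. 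A Laplace-type estimate combined with $(V_{3})$ gives $\bigl|\int_{\Omega}V u_{R}^{2}\,dx\bigr|\leq Ce^{-\kappa R}$; because $\kappa>\mu_{G}\sqrt{V_{\infty}}$, this is dominated by the exponential remainder in $A_{R}$. Exploiting the identity $n^{p/(p-1)}\cdot n^{-1/(p-1)}=n$ and Taylor-expanding the ratio,
\[
J(t_{R}u_{R})\leq nc_{\infty}-c_{2}R^{-\alpha}+O\bigl(e^{-\mu_{G}\sqrt{V_{\infty}}R}\bigr)
\]
with $c_{2}>0$; the polynomial gain overrides the exponential error for $R$ large, so the desired strict inequality $c_{G}<\ell(G)c_{\infty}$ follows.

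The main obstacle is precisely this fine asymptotic analysis of $B_{R}$: the nonlocality of the Choquard kernel produces pairwise cross interactions decaying only \emph{polynomially} as $R^{-\alpha}$, and one must expand $\bigl(\sum_{k}\omega(\cdot-R g_{k}z_{0})\bigr)^{p}$ for general $p\geq 2$ to isolate this leading contribution with the right sign while controlling the overlap corrections. The condition $\kappa>\mu_{G}\sqrt{V_{\infty}}$ is used precisely to keep the potential contribution exponentially smaller than the linear cross interactions between bubbles, and the hypothesis $\ell(G)\geq 3$ ensures that enough distinct orbit pairs of bubble centers contribute to make $c_{2}$ strictly positive.
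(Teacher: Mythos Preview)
Your overall strategy (show $c_G<\ell(G)c_\infty$ via a sum of translated ground states, then invoke the $G$-invariant Palais--Smale compactness below $\ell(G)c_\infty$) matches the paper's, but the heart of the argument---the energy estimate---is genuinely different. The paper never uses the polynomial cross term you isolate. Instead, it tracks the \emph{exponential} interaction $\varepsilon_{Rz}:=\sum_{gz\neq hz}I(R(gz-hz))$, where $I(\zeta)=\int(|x|^{-\alpha}\ast\omega^{p})\omega^{p-1}\omega_\zeta\sim e^{-|\zeta|}$, and obtains
\[
\Vert\chi\sigma_{Rz}\Vert_V^2\leq \ell(G)\Vert\omega\Vert^2+\varepsilon_{Rz}+o(\varepsilon_{Rz}),\qquad
\mathbb{D}(\chi\sigma_{Rz})\geq \ell(G)\mathbb{D}(\omega)+b_p\varepsilon_{Rz}+o(\varepsilon_{Rz}),
\]
with $b_p=2(p-1)$ for $p>2$ and $b_p=4$ for $p=2$. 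A calculus lemma (their Lemma~5.9) then shows the ratio drops because $b_p/p>1$. The hypotheses $\kappa>\mu_G\sqrt{V_\infty}$ and $\ell(G)\geq3$ are needed \emph{there} to make the potential term and the cutoff errors $o(\varepsilon_{Rz})$; in particular, $\ell(G)\geq3$ forces $\mu^G(Z)<2$, which is the technical input to their Lemmas~5.7--5.8.

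Your route exploits instead the purely nonlocal fact that $\iint\omega^p(x-\xi_j)\omega^p(y-\xi_k)|x-y|^{-\alpha}\,dx\,dy\sim\|\omega^p\|_1^2\,|\xi_j-\xi_k|^{-\alpha}$, giving a polynomial gain $cR^{-\alpha}$ in $B_R$ that swamps \emph{every} exponential correction in $A_R$. This is correct and more elementary; note, however, that your own argument then uses neither $\kappa>\mu_G\sqrt{V_\infty}$ nor $\ell(G)\geq3$ in the way you claim: any $\kappa>0$ already gives $\int V^+u_R^2=O(e^{-\min(\kappa,2\sqrt{V_\infty})R})=o(R^{-\alpha})$, and $\ell(G)\geq2$ already produces an off-diagonal pair with $c_1>0$. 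So your explanation of the role of those hypotheses is off---you have in fact sketched a proof of a stronger statement. The paper's more delicate exponential bookkeeping is what carries over to the sign-changing Theorem~1.4, where $\sigma_{Rz}$ is no longer positive and the crude bound $(\sum\omega_k)^p\geq\sum\omega_k^p$ is unavailable.
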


\begin{theorem}
\label{slna4}If $p\geq2$, $\Omega$ is $\Gamma$-invariant, $\phi:\Gamma
\rightarrow\mathbb{Z}/2$ is an epimorphism, $Z$ is a $\Gamma$-invariant subset
of $\Sigma$, $V$ is a $\Gamma$-invariant function and the following hold:

\noindent$(Z_{0})$ $\ $There exists $a_{0}>1$ such that
\[
\text{\emph{dist}}(\gamma z,Gz)\geq a_{0}\mu(Gz)\qquad\text{for all }z\in
Z\text{\ and\ }\gamma\in\Gamma\smallsetminus G.
\]
$\mathbf{(}V_{4}\mathbf{)}$ \ There exist $c_{0}>0$ and $\kappa>\mu^{\Gamma
}(Z)\sqrt{V_{\infty}}$ such that
\[
V(x)\leq c_{0}e^{-\kappa|x|}\hspace{0.3in}\text{for all }\,x\in\mathbb{R}^{N},
\]
then \emph{(\ref{prob})} has at least \emph{genus}$(Z/G)$ pairs of sign
changing solutions $\pm u$, which satisfy \emph{(\ref{equi})} and
\emph{(\ref{ener})}.
\end{theorem}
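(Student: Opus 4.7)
The plan is to realize the solutions as critical points of the energy functional associated to (\ref{prob}), restricted to the Nehari manifold inside the subspace of $\phi$-equivariant functions in $H_0^1(\Omega)$, and to count them via a Krasnoselskii genus minimax argument. By the principle of symmetric criticality, critical points in this symmetric class are classical solutions of (\ref{prob}) satisfying (\ref{equi}); since $\phi$ is onto, any such nontrivial critical point automatically changes sign. Condition $(Z_0)$ forces $Z \subset \Sigma\smallsetminus\Sigma_0$, so $\mathbb{Z}/2$ acts freely on $Z/G$ and $\text{genus}(Z/G)$ is well defined.

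The next step is to invoke the local Palais--Smale condition, which is proved in an earlier section of the paper and guarantees $(PS)_c$ on the $\phi$-equivariant Nehari manifold $\mathcal{N}_\phi$ for every $c < \ell(\Gamma)c_\infty$. The standard genus-based deformation lemma then reduces the theorem to producing a continuous $\mathbb{Z}/2$-equivariant map $\sigma : Z/G \to \mathcal{N}_\phi \cap \{J < c\}$ for some $c < \ell(\Gamma)c_\infty$. For $z \in Z$ and $R > 0$ large, I would define
\[
\widetilde\sigma_R(z)(x) := \sum_{[g]\in\Gamma/G} \phi(g)\,\omega(x - R g z),
\]
where $\omega$ is the positive radial ground state of the limit problem with constant potential $V_\infty$ provided by Moroz--van Schaftingen, with its pointwise decay $\omega(x) \le C e^{-\sqrt{V_\infty}\,|x|}$. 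The identity $\phi(\gamma g) = -\phi(g)$ for $\gamma\in\Gamma\smallsetminus G$ makes $\widetilde\sigma_R$ descend to a $\mathbb{Z}/2$-equivariant map on $Z/G$, and the standard projection onto $\mathcal{N}_\phi$ yields $\sigma_R$.

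The decisive requirement is the uniform strict inequality $\sup_{z\in Z} J(\sigma_R(z)) < \ell(\Gamma)c_\infty$ for some large $R$. Expanding, the energy equals $\ell(\Gamma)c_\infty$ plus cross interactions of three origins: the quadratic $V_\infty$-pairings, the $V$-contribution, and the nonlocal Choquard pairings. The first and third are both of order $e^{-\sqrt{V_\infty}\,R\,d(z)}$ with $d(z)\le\mu^\Gamma(Z)$; the $V$-contribution is controlled by $Ce^{-\kappa R}$ thanks to $(V_4)$, and since $\kappa > \mu^\Gamma(Z)\sqrt{V_\infty}$ this is of strictly higher exponential order and hence absorbed. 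The sign pattern imposed by $\phi$, combined with the geometric separation $(Z_0)$, is then used to show that the combined leading cross term from the $V_\infty$ and Choquard pairings is strictly negative and stays bounded away from $0$ uniformly in $z \in Z$.

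The main obstacle is precisely this last sign-versus-order analysis of the cross terms. It requires a Bessel-type asymptotic expansion for the $V_\infty$-interaction integrals, a matching expansion for the nonlocal Choquard cross integrals valid in the relevant range of $p$ and $\alpha$, and careful bookkeeping of the distances $|g_i z - g_j z|$ weighted by $\phi(g_i)\phi(g_j)$, with $(Z_0)$ serving to guarantee that the mixed-sign pairs stay in a configuration where the combined leading cross term keeps a definite negative sign. Once this strict inequality is secured uniformly in $z \in Z$, the genus minimax machinery immediately yields at least $\text{genus}(Z/G)$ pairs of critical points of $J|_{\mathcal{N}_\phi}$ with energy below $\ell(\Gamma)c_\infty$, which are the desired sign changing solutions $\pm u$ satisfying (\ref{equi}) and (\ref{ener}).
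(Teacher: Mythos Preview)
Your overall strategy coincides with the paper's: symmetric criticality, the Palais--Smale condition below $\ell(\Gamma)c_\infty$ (Corollary~\ref{corps}), a $\phi$-equivariant test map built from translates of the ground state $\omega$, and the Krasnoselskii genus argument. Two technical slips: the sum defining $\widetilde\sigma_R(z)$ must run over the full orbit $\Gamma z$ (that is, $\ell(\Gamma)$ summands), not over the two cosets in $\Gamma/G$; otherwise the function is not even $G$-invariant and its leading energy is $2c_\infty$, not $\ell(\Gamma)c_\infty$. And since $\Omega$ may be a proper exterior domain, the paper multiplies by a fixed cut-off $\chi$ vanishing on $\mathbb{R}^N\smallsetminus\Omega$; the extra error terms this produces are handled in Lemmas~\ref{descorte} and~\ref{op2}.

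The substantive gap is your explanation of why the energy drops below $\ell(\Gamma)c_\infty$. You attribute it to ``the sign pattern imposed by $\phi$'' and say $(Z_0)$ keeps ``the mixed-sign pairs in a configuration where the combined leading cross term keeps a definite negative sign.'' That is not the mechanism, and in fact $(Z_0)$ points the other way. In the paper, $(Z_0)$ (via $(Z_\ast)$) is used only to show that the opposite-sign interactions $\widehat\varepsilon_{Rz}$ are $o(\varepsilon_{Rz})$, i.e.\ \emph{negligible} compared with the same-sign interactions $\varepsilon_{Rz}$ coming from pairs inside a single $G$-cluster (Lemmas~\ref{epsilongorro} and~\ref{interD}). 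The leading correction is therefore driven entirely by \emph{positive} same-sign interactions: both $\Vert\chi\sigma_{Rz}\Vert_V^2$ and $\mathbb{D}(\chi\sigma_{Rz})$ \emph{increase}, by $\varepsilon_{Rz}$ and by $b_p\varepsilon_{Rz}$ respectively, where $b_p=2(p-1)$ for $p>2$ and $b_p=4$ for $p=2$. The strict inequality then follows from the elementary observation (Lemma~\ref{razon}) that $(a+t)(a+bt)^{-1/p}<a^{(p-1)/p}$ for small $t>0$ whenever $b/p>1$. Thus the energy decreases because the nonlocal attraction between nearby same-sign bumps outweighs their $H^1$ interaction---exactly the mechanism behind Theorem~\ref{slna3}, where $\phi\equiv1$ and no sign pattern is present at all. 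The potential contribution and the cut-off enter only as $o(\varepsilon_{Rz})$ (Lemmas~\ref{op1} and~\ref{op2}). A route based on sign cancellation from the opposite-$\phi$ pairs, as you outline, would require those pairs to be the \emph{closest} ones, which is precisely what $(Z_0)$ forbids.
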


Theorem \ref{slna1} was proved in \cite{ccs} for $\Omega=\mathbb{R}^{N},$
under additional assumptions on $\alpha$ and $p.$ As far as we know, Theorem
\ref{slna3} is the first existence result for potentials $V$ which are
nontrivial and take nonnegative values at infinity. In the local case, Bahri
and Lions proved existence for this type of potentials without any symmetries
\cite{bl}. Unfortunately, some of the facts used in their proof are not
available in the nonlocal case.

As we mentioned before, the existence of infinitely many solutions is known in
the radial case \cite{ln} and in the case where every $G$-orbit in $\Omega$ is
infinite \cite{ccs}. In contrast, Theorems \ref{slna2} and \ref{slna4} provide
multiple solutions when the data have only finite symmetries. The following
examples, which illustrate these results, are taken from \cite{cs}, where
similar results for the local case were recently obtained.

\begin{example}
\emph{Let }$\Gamma$\emph{\ be the group spanned by the reflection }%
$\gamma:\mathbb{R}^{N}\rightarrow\mathbb{R}^{N}$\emph{\ on a linear subspace
}$W$\emph{\ of }$\mathbb{R}^{N}$\emph{. If }$\Omega$\emph{\ and }%
$V$\emph{\ are invariant under this reflection, we may take }$\phi
:\Gamma\rightarrow\mathbb{Z}/2$\emph{\ to be the epimorphism given by }%
$\phi(\gamma):=-1$\emph{\ and }$Z$\emph{\ to be the unit sphere in the
orthogonal complement of }$W$\emph{. Then, Theorem \ref{slna2} yields }%
\[
\text{\emph{genus}}(Z)=N-\dim W
\]
\emph{pairs of solutions to problem (\ref{prob}) provided }$(V_{2}%
)$\emph{\ holds for some }$\lambda\in(0,2\sqrt{V_{\infty}})$\emph{. }
\end{example}

\begin{example}
\emph{If }$N=2n$\emph{\ we identify} $\mathbb{R}^{N}$ \emph{with}
$\mathbb{C}^{n}$ \emph{and take }$\Gamma$\emph{\ to be the cyclic group of
order }$2m$\emph{\ spanned by }$\rho(z_{1},\ldots,z_{n}):=(e^{\pi i/m}%
z_{1},\ldots,e^{\pi i/m}z_{n})$\emph{\ and} $\phi:\Gamma\rightarrow
\mathbb{Z}/2$ \emph{to be the epimorphism given by }$\phi(\rho):=-1.$%
\emph{\ Then }$G:=\ker\phi$\emph{\ is the cyclic subgroup of order }%
$m$\emph{\ spanned by} $\rho^{2},$ $\Sigma=\mathbb{S}^{N-1}$ \emph{and}
$\Sigma_{0}=\emptyset.$ \emph{So we may take} $Z:=\mathbb{S}^{N-1}.$ \emph{The
genus of} $\mathbb{S}^{N-1}/G$ \emph{can be estimated in many cases. For
example, if }$m=2^{k}$\emph{, Lemma 6.1 in \cite{cs} together with Theorem 1.2
in \cite{b} give }%
\[
\text{\emph{genus}}(\mathbb{S}^{N-1}/G)\geq\frac{N-1}{2^{k}}+1.
\]
\emph{Since }$\mu_{\Gamma}(\mathbb{S}^{N-1})=\left\vert e^{\pi i/m}%
-1\right\vert ,$\emph{\ if condition }$(V_{2})$\emph{\ holds for }$m=2^{k}%
,$\emph{\ it will also hold for }$m=2^{j}$\emph{\ with }$0\leq j<k.$\emph{\ An
easy argument shows that, if }$u_{j}$\emph{\ satisfies (\ref{equi}) for
}$m=2^{j},$\emph{\ }$u_{l}$\emph{\ satisfies (\ref{equi}) for }$m=2^{l}
$\emph{\ and }$j\neq l,$\emph{\ then }$u_{j}\neq u_{l}$\emph{, see
\cite[section 1]{cs}. Therefore, Theorem \ref{slna2} provides at least}%
\[
\sum_{j=0}^{k}\frac{N-1}{2^{j}}+k+1=(N-1)\frac{2^{k+1}-1}{2^{k}}+k+1
\]
\emph{pairs of sign changing solutions in this case.}
\end{example}

The group $G$ in the previous example satisfies $\ell(G)=m.$ This shows that
there are many groups satisfying the symmetry assumption in Theorem
\ref{slna3} when $N$ is even. If $N$ is odd not many groups satisfy
$\ell(G)\geq3.$ For example, if $N=3,$ the only subgroups of $O(3)$ which
satisfy this condition are the rotation groups of the icosahedron, octahedron
and tetrahedron, $I$, $O$ and $T$, and the groups $I\times\mathbb{Z}_{2}^{c}$,
$O\times\mathbb{Z}_{2}^{c}$, $T\times\mathbb{Z}_{2}^{c}$ and $O^{-}$ described
in \cite[Appendix A]{clm}.

Note that $(Z_{0})$ implies that $Z\subset\Sigma\smallsetminus\Sigma_{0}$.
Condition $(Z_{0})$ cannot be realized if $N=3$. Next, we give an example for
which $(Z_{0})$ holds.

\begin{example}
\emph{We identify} $\mathbb{R}^{4n}$ \emph{with} $\mathbb{C}^{n}%
\times\mathbb{C}^{n}$ \emph{and consider the subgroup }$\Gamma$\emph{\ of
}$O(4n)$\emph{\ spanned by }$\rho$\emph{\ and }$\gamma,$\emph{\ where }%
$\rho(y,z):=(e^{\pi i/m}y,e^{\pi i/m}z)$\emph{\ and }$\gamma(y,z):=(-\overline
{z},\overline{y})$\emph{\ for} $(y,z)\in\mathbb{C}^{n}\times\mathbb{C}^{n}$
\emph{and some }$m\geq3.$\emph{\ We define }$\phi:\Gamma\rightarrow
\mathbb{Z}/2$\emph{\ by }$\phi(\rho)=1,$\emph{\ }$\phi(\gamma)=-1.$%
\emph{\ Then }$G:=\ker\phi$\emph{\ is the cyclic subgroup of order }%
$2m$\emph{\ spanned by }$\rho.$\emph{\ Since }$m\geq3,$\emph{\ property
}$(Z_{0})$\emph{\ holds for} $Z:=\mathbb{S}^{4n-1}.$ \emph{We showed in
\cite[Proposition 6.1]{cs} that genus}$(\mathbb{S}^{4n-1}/G)\geq2n+1.$
\emph{Consequently, if }$\Omega$\emph{\ and }$V$\emph{\ are }$\Gamma
$\emph{-invariant and }$(V_{4})$\emph{\ holds, Theorem \ref{slna4} yields
}$2n+1$\emph{\ pairs of sign changing solutions to problem (\ref{prob}). Note
that} $\mu^{G}(\mathbb{S}^{4n-1})=\left\vert e^{\pi i/m}-1\right\vert ,$
\emph{hence }$(V_{4})$\emph{\ becomes less restrictive as }$m$%
\emph{\ increases.}
\end{example}

This paper is organized as follows: In section 2 we set the variational
framework for problem (\ref{prob}). In section 3 some preliminary asymptotic
estimates are established. In section 4 we consider potentials which are
strictly negative at infinity and prove Theorems \ref{slna1} and \ref{slna2}.
Finally, in section 5 we consider potentials which take on nonnegative values
at infinity and prove Theorems \ref{slna3} and \ref{slna4}.

\section{The variational setting}

From now on we shall assume without loss of generality that $V_{\infty}=1$.
Assumption $(V_{0})$ guarantees that
\begin{equation}
\left\langle u,v\right\rangle _{V}:=\int_{\Omega}\nabla u\cdot\nabla
v+\int_{\Omega}\left(  1+V(x)\right)  uv\label{pe}%
\end{equation}
is a scalar product in $H_{0}^{1}(\Omega)$ and that the induced norm
\begin{equation}
\left\Vert u\right\Vert _{V}:=\left(  \int_{\Omega}\left(  \left\vert \nabla
u\right\vert ^{2}+\left(  1+V(x)\right)  u^{2}\right)  \right)  ^{1/2}%
\label{normV}%
\end{equation}
is equivalent to the usual one. If $V=0$ we write $\left\langle
u,v\right\rangle $ and $\left\Vert u\right\Vert $ instead of $\left\langle
u,v\right\rangle _{0}$ and $\left\Vert u\right\Vert _{0}.$

As usual, we identify $u\in H_{0}^{1}(\Omega)$ with its extension to
$\mathbb{R}^{N}$ obtained by setting $u=0$ in $\mathbb{R}^{N}\smallsetminus
\Omega$. We define
\[
\mathbb{D}(u):=\int_{\Omega}\left(  \frac{1}{|x|^{\alpha}}\ast|u|^{p}\right)
|u|^{p}=\int_{\mathbb{R}^{N}}\int_{\mathbb{R}^{N}}\frac{|u(x)|^{p}|u(y)|^{p}%
}{|x-y|^{\alpha}}dx\,dy
\]
and set $r:=\frac{2N}{2N-\alpha}$. As $p\in(\frac{2N-\alpha}{N},\frac
{2N-\alpha}{N-2})$, one has that $pr\in(2,\frac{2N}{N-2})$. The
Hardy-Littlewood-Sobolev inequality \cite[Theorem 4.3]{ll} implies the
existence of a positive constant $\bar{C}$ such that
\begin{equation}
\mathbb{D}(u)\leq\bar{C}|u|_{pr}^{2p}\text{\qquad for all \ }u\in
H^{1}(\mathbb{R}^{N}),\label{desigualdad}%
\end{equation}
where $\left\vert u\right\vert _{q}:=\left(  \int_{\mathbb{R}^{N}}\left\vert
u\right\vert ^{q}\right)  ^{1/q}$ is the norm in $L^{q}(\mathbb{R}^{N}).$ This
shows that $\mathbb{D}$ is well defined in $H^{1}(\mathbb{R}^{N})$.

We shall assume from now on that $p\in\bigl[2,\frac{2N-\alpha}{N-2}\bigr)$.
Then the functional $J_{V}:H_{0}^{1}(\Omega)\rightarrow\mathbb{R}$ given by
\begin{equation}
J_{V}(u):=\frac{1}{2}\left\Vert u\right\Vert _{V}^{2}-\frac{1}{2p}%
\mathbb{D}(u)\label{J}%
\end{equation}
is of class $\mathcal{C}^{2}$. Its derivative is
\[
J_{V}^{\prime}(u)v:=\left\langle u,v\right\rangle _{V}-\int_{\Omega}\left(
\frac{1}{|x|^{\alpha}}\ast|u|^{p}\right)  |u|^{p-2}uv.
\]
Hence, the solutions to problem (\ref{prob}) are the critical points of
$J_{V}$.

The homomorphism $\phi:\Gamma\rightarrow\mathbb{Z}/2$ induces an orthogonal
action of $\Gamma$ on $H_{0}^{1}(\Omega)$ as follows: for $\gamma\in\Gamma$
and $u\in H_{0}^{1}(\Omega)$ we define $\gamma u\in H_{0}^{1}(\Omega)$ by
\[
(\gamma u)(x):=\phi(\gamma)u(\gamma^{-1}x).
\]
Since $\left\langle \gamma u,\gamma v\right\rangle _{V}=\left\langle
u,v\right\rangle _{V}$ and $\mathbb{D}(\gamma u)=\mathbb{D}(u)$ for all
$\gamma\in\Gamma$, $u,v\in H_{0}^{1}(\Omega)$, the functional $J_{V}$ is
$\Gamma$-invariant. By the principle of symmetric criticality \cite{Palais,
Willem} the critical points of the restriction of $J_{V}$ to the fixed point
space of this action, which we denote by
\begin{align*}
H_{0}^{1}(\Omega)^{\phi}: &  =\{u\in H_{0}^{1}(\Omega):\gamma u=u\,\ \forall
\gamma\in\Gamma\}\\
&  =\{u\in H_{0}^{1}(\Omega):u(\gamma x)=\phi(\gamma)u(x)\,\ \forall\gamma
\in\Gamma,\,\forall x\in\Omega\},
\end{align*}
are the solutions to problem (\ref{prob}) that satisfy (\ref{equi}). The
nontrivial ones lie on the Nehari manifold
\[
\mathcal{N}_{\Omega,V}^{\phi}:=\left\{  u\in H_{0}^{1}(\Omega)^{\phi}%
:u\neq0,\,\Vert u\Vert_{V}^{2}=\mathbb{D}(u)\right\}  ,
\]
which is of class $\mathcal{C}^{2}$ and radially diffeomorphic to the unit
sphere in $H_{0}^{1}(\Omega)^{\phi}$. The radial projection $\pi:H_{0}%
^{1}(\Omega)^{\phi}\smallsetminus\{0\}\rightarrow\mathcal{N}_{\Omega,V}^{\phi
}$ is given by
\begin{equation}
\pi(u):=\left(  \dfrac{\Vert u\Vert_{V}^{2}}{\mathbb{D}(u)}\right)  ^{\frac
{1}{2(p-1)}}u.\label{max}%
\end{equation}
Accordingly, for every $u\in H_{0}^{1}(\Omega)^{\phi}\smallsetminus\{0\},$%
\begin{equation}
J_{V}(\pi(u))=\frac{p-1}{2p}\left(  \dfrac{\Vert u\Vert_{V}^{2}}%
{\mathbb{D}(u)^{\frac{1}{p}}}\right)  ^{\frac{p}{p-1}}.\label{minmax}%
\end{equation}
We set
\[
c_{\Omega,V}^{\phi}:=\inf_{\mathcal{N}_{\Omega,V}^{\phi}}J_{V}.
\]
If $\phi\equiv1$ is the trivial homomorphism, then $\Gamma=G:=\ker\phi.$ In
this case we shall write $H_{0}^{1}(\Omega)^{G}$, $\mathcal{N}_{\Omega,V}^{G}$
and $c_{\Omega,V}^{G}\ $instead of $H_{0}^{1}(\Omega)^{\phi}$, $\mathcal{N}%
_{\Omega,V}^{\phi}$ and $c_{\Omega,V}^{\phi}$. If $G=\{1\}$ is the trivial
group, we shall omit it from the notation and write simply $H_{0}^{1}(\Omega
)$, $\mathcal{N}_{\Omega,V}$ and $c_{\Omega,V}$.

The problem
\begin{equation}
\left\{
\begin{array}
[c]{l}%
-\Delta u+u=\left(  \frac{1}{|x|^{\alpha}}\ast|u|^{p}\right)  |u|^{p-2}u,\\
u\in H^{1}(\mathbb{R}^{N}),
\end{array}
\right. \label{problim}%
\end{equation}
plays a special role: it is the limit problem for (\ref{prob}). In this case
we write $J_{\infty},$ $\mathcal{N}_{\infty}$ and $c_{\infty}\ $instead of
$J_{0},$ $\mathcal{N}_{\mathbb{R}^{N},0}$ and $c_{\mathbb{R}^{N},0}.$

It is known that $c_{\infty}$ is attained at a positive function $\omega\in
H^{1}(\mathbb{R}^{N})$ (see for example \cite[Theorem 3]{ms}). The following
result shows, however, that $c_{\Omega,V}^{\phi}$ is not necessarily attained.
We write
\[
B_{r}(\xi):=\{x\in\mathbb{R}^{N}:\left\vert x-\xi\right\vert <r\}.
\]

\begin{proposition}
If $V\geq0$, then $c_{\Omega,V}=c_{\infty}.$ If, additionally, $V\not \equiv $
$0$ when $\Omega=\mathbb{R}^{N}$, then $c_{\Omega,V}$ is not attained.
\end{proposition}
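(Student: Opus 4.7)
The plan is to prove $c_{\Omega,V}=c_\infty$ by two opposite inequalities, and then obstruct attainment by comparing any alleged minimizer with the ground state of the limit problem, using its strict positivity.

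For the lower bound $c_{\Omega,V}\geq c_\infty$ I would use the variational characterization \eqref{minmax}. For every $u\in H_0^1(\Omega)\setminus\{0\}$, extended by zero to $\mathbb{R}^N$, the hypothesis $V\geq 0$ yields $\|u\|_V^2\geq\|u\|^2$ while $\mathbb{D}(u)$ is unchanged. Since $H_0^1(\Omega)\subset H^1(\mathbb{R}^N)$ under this extension, applying \eqref{minmax} to both $J_V$ and $J_\infty$ and infimizing gives the inequality.

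For the upper bound $c_{\Omega,V}\leq c_\infty$ I would concentrate a positive ground state $\omega$ of the limit problem at infinity inside $\Omega$. Fix cut-offs $\chi_R\in\mathcal{C}_c^\infty(\mathbb{R}^N)$ equal to $1$ on $B_R(0)$ and vanishing outside $B_{R+1}(0)$, pick $\xi_n\in\Omega$ with $|\xi_n|\to\infty$ and $R_n\to\infty$ slowly enough that $B_{R_n+1}(\xi_n)\subset\Omega$ (possible because $\mathbb{R}^N\setminus\Omega$ is bounded), and set $\omega_n(x):=\chi_{R_n}(x-\xi_n)\,\omega(x-\xi_n)\in H_0^1(\Omega)$. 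The decay of $\omega$ at infinity from \cite{ms} implies $\chi_{R_n}\omega\to\omega$ in $H^1(\mathbb{R}^N)\cap L^{pr}(\mathbb{R}^N)$, so by translation-invariance of $\|\cdot\|$ and $\mathbb{D}$ combined with \eqref{desigualdad}, $\|\omega_n\|^2\to\|\omega\|^2$ and $\mathbb{D}(\omega_n)\to\mathbb{D}(\omega)$. Meanwhile $\int V\omega_n^2\to 0$ by dominated convergence, since $V(\xi_n+y)\to 0$ pointwise by $(V_0)$ and $V$ is bounded. Therefore $J_V(\pi(\omega_n))\to J_\infty(\omega)=c_\infty$ via \eqref{minmax}.

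To rule out attainment, suppose $u\in\mathcal{N}_{\Omega,V}$ realizes $c_{\Omega,V}=c_\infty$. Replacing $u$ by $|u|$ (same $\|\cdot\|_V$-norm, same value of $\mathbb{D}$), I may assume $u\geq 0$; a Nehari minimizer is a critical point of $J_V$, so $u$ solves the Euler--Lagrange PDE and the strong maximum principle forces $u>0$ in $\Omega$. Let $\tilde{u}$ denote the zero extension of $u$ to $\mathbb{R}^N$. Then $V\geq 0$ yields
\[
c_\infty=\frac{p-1}{2p}\Bigl(\frac{\|u\|_V^2}{\mathbb{D}(u)^{1/p}}\Bigr)^{\!p/(p-1)}\geq\frac{p-1}{2p}\Bigl(\frac{\|\tilde{u}\|^2}{\mathbb{D}(\tilde{u})^{1/p}}\Bigr)^{\!p/(p-1)}=J_\infty(\pi(\tilde{u}))\geq c_\infty,
\]
so both inequalities are equalities: the first gives $\int V u^2=0$, and the second says $\pi(\tilde{u})$ is a ground state of the limit problem. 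If $\Omega=\mathbb{R}^N$ and $V\not\equiv 0$, the continuity of $V$ combined with $u>0$ everywhere contradicts $\int V u^2=0$; if $\Omega\neq\mathbb{R}^N$, then $\pi(\tilde{u})$ vanishes on the nonempty open set $\mathbb{R}^N\setminus\overline{\Omega}$, contradicting the strict positivity of ground states of the limit problem established in \cite{ms}. The main technical point is the upper bound: because $\mathbb{D}$ is nonlocal, the convolution tails outside $B_{R_n+1}(\xi_n)$ must be handled using the explicit decay asymptotics of $\omega$ from \cite{ms}, whereas in the local case one can simply localize the nonlinear term.
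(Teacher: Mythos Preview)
Your argument is correct and, for the equality $c_{\Omega,V}=c_\infty$, it is essentially the same as the paper's: translated and truncated ground states give the upper bound, and $V\geq 0$ together with $H_0^1(\Omega)\subset H^1(\mathbb{R}^N)$ gives the lower bound. Your comment that the nonlocal tails of $\mathbb{D}$ require the explicit decay asymptotics of $\omega$ overstates the difficulty: the paper handles $\mathbb{D}(u_n)\to\mathbb{D}(\omega)$ by plain dominated convergence (equivalently, continuity of $\mathbb{D}$ on $L^{pr}$ via Hardy--Littlewood--Sobolev), without appealing to the pointwise asymptotics from \cite{ms}.

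The genuine methodological difference is in the non-attainment step. The paper observes that a minimizer $u$ would be a ground state of the limit problem vanishing on an open set (either $\{V>0\}$ when $\Omega=\mathbb{R}^N$, or $\mathbb{R}^N\setminus\Omega$ otherwise) and invokes the \emph{unique continuation principle} \cite{gl,jk} to reach a contradiction. You instead pass to $|u|$, use the strong maximum principle to get $u>0$ in $\Omega$, and then contradict either $\int Vu^2=0$ directly (when $\Omega=\mathbb{R}^N$) or the strict positivity of Choquard ground states from \cite{ms} (when $\Omega\neq\mathbb{R}^N$). Your route is more elementary in that it avoids unique continuation, at the cost of importing the positivity result for the limit problem; the paper's route is more uniform across the two cases and does not rely on any structure of the ground states of \eqref{problim}.
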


\begin{proof}
Since $H_{0}^{1}(\Omega)\subset H^{1}(\mathbb{R}^{N})$ and $V\geq0$ one easily
concludes that $c_{\Omega,V}\geq c_{\infty}.$ Let $R>0$ be such that $\left(
\mathbb{R}^{N}\smallsetminus\Omega\right)  \subset B_{R}(0)$, and let
$(x_{n})$ be a sequence in $\mathbb{R}^{N}$ such that $\left\vert
x_{n}\right\vert >R$ and $\left\vert x_{n}\right\vert \rightarrow\infty$. We
choose a cut-off function $\chi\in\mathcal{C}_{c}^{\infty}(\mathbb{R}^{N})$
such that $0\leq\chi\leq1,$ $\chi(x)=1$ if $\left\vert x\right\vert \leq1$ and
$\chi(x)=0$ if $\left\vert x\right\vert \geq2.$ We define $r_{n}:=\frac{1}%
{2}(\left\vert x_{n}\right\vert -R)$ and
\[
u_{n}(x):=\chi\Bigl(\frac{x-x_{n}}{r_{n}}\Bigr)\omega(x-x_{n}).
\]
Then $u_{n}\in H_{0}^{1}(\Omega)$, $u_{n}\neq0$, $u_{n}\rightharpoonup0$
weakly in $H_{0}^{1}(\mathbb{R}^{N})$ and $u_{n}\rightarrow0$ strongly in
$L_{loc}^{2}(\mathbb{R}^{N})$. An easy argument shows that
\[
\lim_{n\rightarrow\infty}\Vert u_{n}\Vert_{V}^{2}=\lim_{n\rightarrow\infty
}\Vert u_{n}\Vert^{2}=\Vert\omega\Vert^{2}\text{.}%
\]
Applying the Lebesgue dominated convergence theorem, we obtain that
\[
\lim_{n\rightarrow\infty}\mathbb{D}(u_{n})=\mathbb{D}(\omega).
\]
Consequently, from (\ref{minmax})\ we obtain that $J_{V}(\pi(u_{n}%
))\rightarrow J_{\infty}(\omega)=c_{\infty}$. Therefore $c_{\Omega,V}\leq
c_{\infty},$ and hence $c_{\Omega,V}=c_{\infty}.$

\noindent Now, if there were $u\in\mathcal{N}_{\Omega,V}$ satisfying
$J_{V}(u)=c_{\Omega,V}$, then $u$ would be a nontrivial solution of problem
(\ref{problim}) with minimum energy and $\Vert u\Vert_{V}^{2}=\Vert u\Vert
^{2}.$ We distinguish two cases: (1) If $\Omega=\mathbb{R}^{N}$ then, by
assumption, $V$ is strictly positive on some open set $U$ of $\mathbb{R}^{N}$.
Since
\[
0=\Vert u\Vert_{V}^{2}-\Vert u\Vert^{2}=\int_{\mathbb{R}^{N}}V(x)u^{2}\geq
\int_{U}V(x)u^{2}\geq0,
\]
we conclude that $u=0$ in $U.$ (2) If $\Omega\neq\mathbb{R}^{N}$ then $u=0$ in
$\mathbb{R}^{N}\smallsetminus\Omega.$ In both cases, we obtain a contradiction
to the unique continuation principle \cite{gl, jk}. As a result, $c_{\Omega
,V}$ is not attained.
\end{proof}

We say that $J_{V}$ \emph{satisfies condition }$(PS)_{c}^{\phi}$ if every
sequence $(u_{n})$ such that
\begin{equation}
u_{n}\in H_{0}^{1}(\Omega)^{\phi},\hspace{0.3in}J_{V}(u_{n})\rightarrow
c,\hspace{0.3in}J_{V}^{\prime}(u_{n})\rightarrow0\,\text{ in }\,H^{-1}%
(\Omega),\label{ps}%
\end{equation}
has a convergent subsequence in $H_{0}^{1}(\Omega)$. If $\phi\equiv1$, we
write $(PS)_{c}^{G}$ instead of $(PS)_{c}^{\phi}.$

\begin{proposition}
\label{propps}$J_{V}$ satisfies condition $(PS)_{c}^{\phi}$ for all
\[
c<\ell(\Gamma)c_{\infty}.
\]

\end{proposition}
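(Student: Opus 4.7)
The plan is to combine the standard concentration--compactness decomposition of a Palais--Smale sequence with a symmetry argument that turns any single escaping bubble into a full $\Gamma$-orbit of bubbles, producing an energy jump of at least $\ell(\Gamma)c_{\infty}$. Let $(u_{n})$ satisfy (\ref{ps}). Combining $J_{V}(u_{n})=O(1)$ with $|J_{V}'(u_{n})u_{n}|\leq\|J_{V}'(u_{n})\|_{H^{-1}}\|u_{n}\|_{V}$ in the identity $\tfrac{p-1}{2p}\|u_{n}\|_{V}^{2}=J_{V}(u_{n})-\tfrac{1}{2p}J_{V}'(u_{n})u_{n}$ yields boundedness in $H_{0}^{1}(\Omega)^{\phi}$, so along a subsequence $u_{n}\rightharpoonup u_{0}$ weakly; since the $\Gamma$-action is orthogonal, $u_{0}\in H_{0}^{1}(\Omega)^{\phi}$. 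Passing to the limit in $J_{V}'(u_{n})\varphi$ for $\varphi\in\mathcal{C}_{c}^{\infty}(\Omega)^{\phi}$ via (\ref{desigualdad}) and the local compactness of $H^{1}\hookrightarrow L^{pr}$ shows $J_{V}'(u_{0})=0$, hence $J_{V}(u_{0})=\tfrac{p-1}{2p}\|u_{0}\|_{V}^{2}\geq 0$.

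Setting $w_{n}:=u_{n}-u_{0}$, a Brezis--Lieb-type identity for $\mathbb{D}$ (a routine consequence of Hardy--Littlewood--Sobolev together with a.e.\ convergence) yields $\mathbb{D}(u_{n})=\mathbb{D}(u_{0})+\mathbb{D}(w_{n})+o(1)$, and because $V(x)\to 0$ at infinity and $w_{n}\rightharpoonup 0$, the lower-order term satisfies $\int V(x)w_{n}^{2}\to 0$. These splittings give
\[
J_{V}(u_{n})=J_{V}(u_{0})+J_{\infty}(w_{n})+o(1),\qquad J_{\infty}'(w_{n})\to 0 \text{ in } H^{-1}(\mathbb{R}^{N}),
\]
and $(w_{n})$ inherits the $\phi$-equivariance of $(u_{n})$ and $u_{0}$. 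It thus suffices to analyse this $\phi$-equivariant Palais--Smale sequence for $J_{\infty}$.

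Now I would invoke P.-L.~Lions' concentration lemma on $|w_{n}|^{pr}$. If vanishing holds, then $w_{n}\to 0$ in $L^{pr}(\mathbb{R}^{N})$, whence $\mathbb{D}(w_{n})\to 0$ by (\ref{desigualdad}) and $\|w_{n}\|^{2}=J_{\infty}'(w_{n})w_{n}+\mathbb{D}(w_{n})\to 0$, producing $u_{n}\to u_{0}$ as desired. Otherwise there exist $\xi_{n}\in\mathbb{R}^{N}$ with $|\xi_{n}|\to\infty$ (forced by $w_{n}\rightharpoonup 0$ and local compactness) and $v\neq 0$ with $w_{n}(\cdot+\xi_{n})\rightharpoonup v$ in $H^{1}(\mathbb{R}^{N})$; passing to the limit in $J_{\infty}'(w_{n})$ shows $v$ solves (\ref{problim}), so $J_{\infty}(v)\geq c_{\infty}$. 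The symmetry now produces copies of this bubble: for every $\gamma\in\Gamma$,
\[
w_{n}(\cdot+\gamma\xi_{n})=\phi(\gamma)\,w_{n}(\gamma^{-1}\cdot+\xi_{n})\rightharpoonup\phi(\gamma)\,v(\gamma^{-1}\cdot),
\]
so each distinct point of the $\Gamma$-orbit of $\xi_{n}$ yields a nontrivial profile of the same energy $J_{\infty}(v)$. After a subsequence, $\#\Gamma\xi_{n}=k$ is constant with $k\geq\ell(\Gamma)$, and for distinct orbit points $|\gamma\xi_{n}-\gamma'\xi_{n}|=|\xi_{n}|\,|\gamma(\xi_{n}/|\xi_{n}|)-\gamma'(\xi_{n}/|\xi_{n}|)|\to\infty$, since $\xi_{n}/|\xi_{n}|$ subconverges to a unit vector with finite $\Gamma$-orbit. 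Hence the $k$ bubbles are asymptotically orthogonal in $H^{1}$ and decouple in $\mathbb{D}$, so iterating the splitting once per bubble gives
\[
J_{\infty}(w_{n})\geq k\,c_{\infty}+o(1)\geq\ell(\Gamma)\,c_{\infty}+o(1),
\]
forcing $c\geq J_{V}(u_{0})+\ell(\Gamma)c_{\infty}\geq\ell(\Gamma)c_{\infty}$, contrary to the hypothesis. The main technical obstacle is precisely this decoupling of the nonlocal integral $\mathbb{D}$ across the $\Gamma$-orbit of centers: unlike the local $L^{2p}$-norm, one must show that every cross-term of the form $\iint |v(\gamma^{-1}(x-\gamma\xi_{n}))|^{p}\,|v(\gamma'^{-1}(y-\gamma'\xi_{n}))|^{p}\,|x-y|^{-\alpha}\,dx\,dy$ with $\gamma\xi_{n}\neq\gamma'\xi_{n}$ vanishes in the limit, which requires splitting each double integral into near- and far-field regions around the two centers and using $|\gamma\xi_{n}-\gamma'\xi_{n}|\to\infty$ together with the $L^{pr}$-integrability of $v$.
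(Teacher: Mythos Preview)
The paper's own proof is a one-line citation of Proposition~3.1 in \cite{ccs}; your argument is precisely the symmetric concentration--compactness proof that result encodes, so you are reconstructing the external reference rather than taking a different route.

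The only inaccuracy is in the orbit-counting step. You assert that along a subsequence $\#\Gamma\xi_{n}=k$ is constant and that $\xi_{n}/|\xi_{n}|$ converges to a unit vector with \emph{finite} $\Gamma$-orbit; neither need be true, since $\Gamma$ is an arbitrary closed subgroup of $O(N)$ and individual orbits may well be infinite even when $\ell(\Gamma)<\infty$. The correct (and simpler) argument is: pass to a subsequence with $\xi_{n}/|\xi_{n}|\to z$, use $\#\Gamma z\geq\ell(\Gamma)$ to fix $\gamma_{1},\dots,\gamma_{\ell(\Gamma)}\in\Gamma$ with the $\gamma_{i}z$ pairwise distinct, and observe that for $i\neq j$ one then has $|\gamma_{i}\xi_{n}-\gamma_{j}\xi_{n}|=|\xi_{n}|\,\bigl|\gamma_{i}(\xi_{n}/|\xi_{n}|)-\gamma_{j}(\xi_{n}/|\xi_{n}|)\bigr|\to\infty$. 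This supplies $\ell(\Gamma)$ pairwise diverging centers, each carrying a nontrivial profile of energy at least $c_{\infty}$, and the nonlocal decoupling you sketch then yields $J_{\infty}(w_{n})\geq\ell(\Gamma)c_{\infty}+o(1)$. With this fix the rest of your argument goes through.
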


\begin{proof}
This follows from Proposition 3.1 in \cite{ccs} taking $A=0,$ $G=\Gamma$,
$\tau=\phi$ (notice that $\mathbb{Z}/2$ is a subgroup of $\mathbb{S}^{1}$) and
$u_{n}\in H_{0}^{1}(\Omega)^{\phi}\subset H^{1}(\mathbb{R}^{N},\mathbb{C}%
)^{\phi}.$
\end{proof}

We denote by $\nabla J_{V}$ the gradient of $J_{V}$ with respect to the scalar
product (\ref{pe}), and by $\nabla_{\mathcal{N}}J_{V}(u)$ the orthogonal
projection of $\nabla J_{V}(u)$ onto the tangent space $T_{u}\mathcal{N}%
_{\Omega,V}^{\phi}$ to the Nehari manifold $\mathcal{N}_{\Omega,V}^{\phi}$ at
the point $u\in\mathcal{N}_{\Omega,V}^{\phi}.$ We shall say that $J_{V}$
\emph{satisfies condition} $(PS)_{c}^{\phi}$ \emph{on} $\mathcal{N}_{\Omega
,V}^{\phi}$ if every sequence $(u_{n})$ such that
\begin{equation}
u_{n}\in\mathcal{N}_{\Omega,V}^{\phi},\hspace{0.3in}J_{V}(u_{n})\rightarrow
c,\hspace{0.3in}\nabla_{\mathcal{N}}J_{V}(u_{n})\rightarrow0,\label{psN}%
\end{equation}
contains a convergent subsequence in $H_{0}^{1}(\Omega)$.

\begin{corollary}
\label{corps}$J_{V}$ satisfies condition$\ (PS)_{c}^{\phi}$ on $\mathcal{N}%
_{\Omega,V}^{\phi}$ for all
\[
c<\ell(\Gamma)c_{\infty}.
\]

\end{corollary}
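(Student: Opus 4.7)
The plan is to reduce the constrained Palais--Smale condition on $\mathcal{N}_{\Omega,V}^{\phi}$ to the free Palais--Smale condition given by Proposition~\ref{propps}. Given a sequence $(u_n)$ satisfying \eqref{psN}, I want to show $J_V'(u_n)\to 0$ in $H^{-1}(\Omega)$, after which the conclusion follows directly from Proposition~\ref{propps}. The strategy is the standard Lagrange multiplier argument for Nehari manifolds.

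First I would extract boundedness and a positive lower bound for $\|u_n\|_V$. Since $u_n\in \mathcal{N}_{\Omega,V}^{\phi}$, the identity $J_V(u_n)=\tfrac{p-1}{2p}\|u_n\|_V^2$ together with $J_V(u_n)\to c$ shows that $\|u_n\|_V^2\to \tfrac{2p}{p-1}c$, so $(u_n)$ is bounded. The Nehari constraint $\|u_n\|_V^2=\mathbb{D}(u_n)$ combined with the Hardy--Littlewood--Sobolev estimate \eqref{desigualdad} and the Sobolev embedding gives $\|u_n\|_V^2 \le C\|u_n\|_V^{2p}$ for some constant $C$, hence $\|u_n\|_V \ge C^{-1/(2p-2)}>0$ for all $n$.

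Next I would set up the Lagrange decomposition. Let $G(u):=\|u\|_V^2-\mathbb{D}(u)$, so $\mathcal{N}_{\Omega,V}^{\phi}=G^{-1}(0)\cap H_0^1(\Omega)^{\phi}\smallsetminus\{0\}$. A direct computation gives $G'(u_n)u_n=2\|u_n\|_V^2-2p\,\mathbb{D}(u_n)=-2(p-1)\|u_n\|_V^2$, which by the previous step is bounded away from zero; in particular $\mathcal{N}_{\Omega,V}^{\phi}$ is a $\mathcal{C}^2$-submanifold near the sequence and we may write
\[
\nabla J_V(u_n)=\nabla_{\mathcal{N}}J_V(u_n)+\lambda_n\nabla G(u_n)
\]
with $\nabla_{\mathcal{N}}J_V(u_n)\in T_{u_n}\mathcal{N}_{\Omega,V}^{\phi}$. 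Testing this equation against $u_n$ and using $J_V'(u_n)u_n=\|u_n\|_V^2-\mathbb{D}(u_n)=0$ produces
\[
0=\langle \nabla_{\mathcal{N}}J_V(u_n),u_n\rangle_V -2(p-1)\lambda_n\|u_n\|_V^2,
\]
which yields $|\lambda_n|\le \tfrac{\|\nabla_{\mathcal{N}}J_V(u_n)\|_V}{2(p-1)\|u_n\|_V}\to 0$ by hypothesis and the lower bound.

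Finally I would conclude. Since $(u_n)$ is bounded, $\nabla G(u_n)$ is bounded in $H_0^1(\Omega)$ (the map $u\mapsto \nabla G(u)$ is continuous and bounded on bounded sets, using Hardy--Littlewood--Sobolev to control the convolution term), so $\lambda_n\nabla G(u_n)\to 0$ in $H_0^1(\Omega)$, and $\nabla_{\mathcal{N}}J_V(u_n)\to 0$ by assumption. Thus $\nabla J_V(u_n)\to 0$ in $H_0^1(\Omega)$, and therefore $J_V'(u_n)\to 0$ in $H^{-1}(\Omega)$. Together with $u_n\in H_0^1(\Omega)^{\phi}$, $J_V(u_n)\to c<\ell(\Gamma)c_{\infty}$, this is exactly \eqref{ps}, so Proposition~\ref{propps} supplies a convergent subsequence. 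There is no serious obstacle: the only mildly delicate step is making sure the denominator in the Lagrange multiplier formula stays bounded away from zero, which is handled by the uniform lower bound on $\|u_n\|_V$ obtained from the Nehari constraint.
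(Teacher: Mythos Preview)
Your argument is correct and is precisely the standard Lagrange-multiplier reduction that the paper has in mind: the paper does not spell out the proof but simply refers to the analogous Corollary~3.8 in \cite{cs}, whose proof is exactly this passage from a constrained Palais--Smale sequence on the Nehari manifold to a free one via the bound on $\lambda_n$ coming from $G'(u_n)u_n=-2(p-1)\|u_n\|_V^2$ and the uniform lower bound on $\|u_n\|_V$. There is nothing to add.
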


\begin{proof}
The proof is completely analogous to that of Corollary 3.8 in \cite{cs}.
\end{proof}

\section{Asymptotic estimates}

The ground states of problem (\ref{problim}) have been recently studied in
\cite{ccs, ms}. The following result holds true.

\begin{theorem}
\label{asinf} Let $\omega$ be a ground state of problem \emph{(\ref{problim}%
)}. Then $\omega\in L^{1}(\mathbb{R}^{N})\cap\mathcal{C}^{\infty}%
(\mathbb{R}^{N})$, $\omega$ does not change sign and it is radially symmetric
and monotone decreasing in the radial direction with respect to some fixed
point. Moreover, $\omega$ has the following asymptotic behavior:

\begin{enumerate}
\item[(i)] If $p>2$ then
\[
\lim\limits_{|x|\rightarrow\infty}|\omega(x)||x|^{\frac{N-1}{2}}e^{|x|}%
\in(0,\infty).
\]

\item[(ii)] If $p=2$ then
\[
\lim\limits_{|x|\rightarrow\infty}|\omega(x)||x|^{\frac{N-1}{2}}e^{Q(|x|)}%
\in(0,\infty),
\]
where
\[
Q(t):=\int_{\delta}^{t}\sqrt{1-\frac{\delta^{\alpha}}{s^{\alpha}}}%
ds\quad\text{and}\quad\delta^{\alpha}:=(4-\alpha)c_{\infty}.
\]

\end{enumerate}
\end{theorem}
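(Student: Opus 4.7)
The plan is to split the theorem into qualitative properties (regularity, sign, symmetry, integrability) and the sharp exponential decay (parts (i)--(ii)), treating each with a different set of tools. The qualitative part follows from standard variational PDE arguments, while the asymptotics require comparison with explicit solutions of a model linear equation.

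For the qualitative properties, since $\omega$ minimizes $J_\infty$ on $\mathcal{N}_\infty$ and $\mathbb{D}(|u|) = \mathbb{D}(u)$, replacing $\omega$ by $|\omega|$ (and reprojecting via $\pi$) cannot increase the energy, so we may assume $\omega \geq 0$; applying the strong maximum principle to $-\Delta + 1$ with nonnegative right-hand side then upgrades this to $\omega > 0$. Radial symmetry and monotonicity about some point follow from the Riesz rearrangement inequality, which does not increase $\int|\nabla\omega|^{2}$ and strictly increases $\mathbb{D}(\omega)$ unless $\omega$ is already a radial decreasing function. Smoothness follows by a bootstrap: from $\omega \in H^{1}$, the Hardy--Littlewood--Sobolev inequality places $|x|^{-\alpha}*|\omega|^{p}$ in $L^{q}$ for progressively larger $q$, and standard $W^{2,q}$ elliptic regularity iterates to $\omega \in \mathcal{C}^{\infty}$. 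The statement $\omega \in L^{1}$ is then a direct consequence of the exponential decay proved next.

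Rewrite the equation as $-\Delta\omega + (1 - K(x))\omega = 0$, where
\[
K(x) := \left(\tfrac{1}{|x|^{\alpha}} * |\omega|^{p}\right)(x)\,|\omega(x)|^{p-2}.
\]
When $p > 2$, both factors of $K$ vanish at infinity (the convolution decays like $|x|^{-\alpha}$, while $|\omega|^{p-2}$ also decays), so the equation is an asymptotic perturbation of $-\Delta u + u = 0$. The fundamental solution $G$ of $-\Delta + 1$ satisfies $G(x) \sim c\,|x|^{-(N-1)/2}e^{-|x|}$, and standard super/sub-solution barriers built from multiples of $G$, combined with a preliminary polynomial decay estimate for $\omega$, yield a two-sided bound of the form $\omega(x) \sim C\,|x|^{-(N-1)/2}e^{-|x|}$, giving the limit in (i).

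For $p = 2$ the nonlinearity is linear in $\omega$ and $K(x) = (|x|^{-\alpha}*\omega^{2})(x)$. Since $\omega^{2} \in L^{1}$, dominated convergence gives $|x|^{\alpha}K(x) \to \|\omega\|_{L^{2}}^{2}$ as $|x| \to \infty$. A short computation combining the Nehari identity $\|\omega\|^{2} = \mathbb{D}(\omega)$ with the Pohozaev identity $\tfrac{N-2}{2}\|\nabla\omega\|^{2} + \tfrac{N}{2}\|\omega\|^{2} = \tfrac{2N-\alpha}{4}\mathbb{D}(\omega)$ and $c_{\infty} = \tfrac{1}{4}\mathbb{D}(\omega)$ gives $\|\omega\|_{L^{2}}^{2} = (4-\alpha)c_{\infty} = \delta^{\alpha}$, so the equation is asymptotically $-\Delta\omega + (1 - \delta^{\alpha}/|x|^{\alpha})\omega = 0$. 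A radial WKB analysis of the corresponding ODE $-u'' - \tfrac{N-1}{r}u' + (1 - \delta^{\alpha}/r^{\alpha})u = 0$ produces solutions of the form $u \sim c\,r^{-(N-1)/2}e^{-Q(r)}$ with $Q$ as in the statement, and this rate is transferred to $\omega$ by constructing matching barriers. The main obstacle is this last step: producing sub- and supersolutions sharp enough to match both the exponential factor $e^{-Q(|x|)}$ and the algebraic prefactor $|x|^{-(N-1)/2}$ with a common nonzero limit is the delicate work carried out in \cite{ccs, ms}.
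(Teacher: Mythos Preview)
The paper does not actually prove this theorem: its entire proof consists of the sentence ``See Theorems 3 and 4 in \cite{ms},'' together with the observation that $\omega$ solves (\ref{problim}) if and only if $u:=\lambda^{-\frac{1}{2(p-1)}}\omega$ solves the equation in \cite{ms}, where $\lambda=\frac{\Gamma(\alpha/2)}{\Gamma((N-\alpha)/2)\pi^{N/2}2^{N-\alpha}}$ is the constant in the Riesz kernel. In other words, Theorem \ref{asinf} is quoted from Moroz--van Schaftingen, not reproved.

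Your sketch is a correct outline of the argument that \cite{ms} actually carries out, and it goes well beyond what the paper itself does. The qualitative part (sign via $\mathbb{D}(|u|)=\mathbb{D}(u)$ plus the strong maximum principle, symmetry via Riesz rearrangement, regularity via HLS and elliptic bootstrap) is accurate. For the asymptotics, your computation that $\|\omega\|_{L^{2}}^{2}=(4-\alpha)c_{\infty}$ when $p=2$---obtained by combining the Nehari identity, the Pohozaev identity $\tfrac{N-2}{2}\|\nabla\omega\|_{2}^{2}+\tfrac{N}{2}\|\omega\|_{2}^{2}=\tfrac{2N-\alpha}{4}\mathbb{D}(\omega)$, and $c_{\infty}=\tfrac{1}{4}\mathbb{D}(\omega)$---is exactly the identification of $\delta^{\alpha}$ that makes the statement of (ii) meaningful. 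You are also right that the genuinely delicate step is the construction of matching barriers for the sharp limit; this is the content of \cite{ms}, and you end up deferring to the same source the paper cites. So your proposal is not wrong, but for the purposes of this paper the appropriate ``proof'' is simply the citation plus the rescaling remark.
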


\begin{proof}
See Theorems 3 and 4 in \cite{ms}. Note that $\omega$ is a solution of
(\ref{problim}) if and only if $u:=\lambda^{-\frac{1}{2(p-1)}}\omega$ is a
solution of problem $(1.1)$ in \cite{ms}, where $\lambda:=\frac{\Gamma
(\alpha/2)}{\Gamma((N-\alpha)/2)\pi^{N/2}2^{N-\alpha}}$ and $\Gamma$ denotes
here (and only here) the gamma function (and not the group).
\end{proof}

In what follows, $\omega$ will denote a positive ground state of problem
(\ref{problim}) which is radially symmetric with respect to the origin. We
continue to assume that $p\geq2$.

\begin{lemma}
\label{propasin}
\[
\lim\limits_{|x|\rightarrow\infty}\omega(x)|x|^{\frac{N-1}{2}}e^{a|x|}=
\begin{cases}
\infty & \text{if $a>1$},\\
0 & \text{if $a\in(0,1).$}%
\end{cases}
\]

\end{lemma}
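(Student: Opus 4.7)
The plan is to split into the two cases of Theorem \ref{asinf} and, in each, rewrite
\[
\omega(x)|x|^{\frac{N-1}{2}}e^{a|x|} = \Bigl(\omega(x)|x|^{\frac{N-1}{2}}e^{\beta(|x|)}\Bigr)\cdot e^{a|x|-\beta(|x|)},
\]
where the first factor converges to a finite positive limit and the second one is driven to $0$ or $\infty$ by a straightforward comparison.

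For $p>2$, I would use Theorem \ref{asinf}(i) directly with $\beta(t)=t$: the first factor tends to a positive constant, while the second is $e^{(a-1)|x|}$, which tends to $\infty$ when $a>1$ and to $0$ when $a\in(0,1)$. This case is immediate.

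For $p=2$, the main work is the asymptotic analysis of $Q(t)=\int_{\delta}^{t}\sqrt{1-\delta^{\alpha}/s^{\alpha}}\,ds$. I would establish the two-sided comparison
\[
(1-\varepsilon)t - C_{\varepsilon}\ \le\ Q(t)\ \le\ t\qquad\text{for all }t\ \text{large enough},
\]
for every $\varepsilon>0$. The upper bound is trivial since the integrand is $\le 1$. For the lower bound, fix $\varepsilon>0$; since $\sqrt{1-\delta^{\alpha}/s^{\alpha}}\to 1$ as $s\to\infty$, there exists $R_{\varepsilon}$ such that the integrand exceeds $1-\varepsilon$ for $s\geq R_{\varepsilon}$, and integrating from $R_{\varepsilon}$ to $t$ yields the claim (with $C_{\varepsilon}$ absorbing the bounded contribution on $[\delta,R_{\varepsilon}]$). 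In particular $Q(t)/t\to 1$.

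With this in hand, the case $p=2$ follows by applying Theorem \ref{asinf}(ii) with $\beta(t)=Q(t)$. If $a>1$, then $a|x|-Q(|x|)\geq (a-1)|x|\to\infty$, so the product diverges. If $a\in(0,1)$, pick $\varepsilon=(1-a)/2$; then for $|x|$ large,
\[
a|x|-Q(|x|)\ \le\ a|x|-\tfrac{1+a}{2}|x|+C_{\varepsilon}\ =\ -\tfrac{1-a}{2}|x|+C_{\varepsilon}\ \longrightarrow\ -\infty,
\]
so the product goes to $0$. The only mildly delicate step is the lower bound on $Q$, but it is a clean consequence of the pointwise convergence of the integrand together with the fact that $\alpha\in(0,N)$ ensures $Q$ is well-defined and nondecreasing; no real obstacle arises.
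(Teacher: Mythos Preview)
Your proof is correct and follows essentially the same route as the paper's: the paper also writes $\omega(x)|x|^{\frac{N-1}{2}}e^{a|x|}=\bigl(\omega(x)|x|^{\frac{N-1}{2}}e^{Q(|x|)}\bigr)e^{a|x|-Q(|x|)}$ and uses the bounds $Q(t)\le t$ and $Q(t)\ge\nu(t-s_{\nu})$ for any $\nu\in(0,1)$, which is exactly your two-sided estimate with $\nu=1-\varepsilon$. The arguments for $a>1$ and $a\in(0,1)$ are then identical to yours.
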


\begin{proof}
Set $b:=\frac{N-1}{2}$. We shall prove this result for $p=2$. The proof for
$p>2$ is an immediate consequence of Theorem \ref{asinf}. Observe that, for
every $\nu\in(0,1)$ it holds true that
\[
\sqrt{1-\frac{\delta^{\alpha}}{s^{\alpha}}}\leq1\quad\text{if }s\geq
\delta\qquad\text{and}\qquad\sqrt{1-\frac{\delta^{\alpha}}{s^{\alpha}}}\geq
\nu\quad\text{if }s\geq\frac{\delta}{(1-\nu^{2})^{1/\alpha}}=:s_{\nu},
\]
and, hence, that
\[
Q(t)\leq t\quad\text{if }t\geq\delta\qquad\text{and}\qquad\nu(t-s_{\nu})\leq
Q(t)\quad\text{if }t\geq s_{\nu}.
\]
Consequently, if $|x|\geq\delta$ then
\[
\omega(x)|x|^{b}e^{a|x|}=\omega(x)|x|^{b}e^{Q(|x|)}e^{a|x|-Q(|x|)}\geq
\omega(x)|x|^{b}e^{Q(|x|)}e^{(a-1)|x|}.
\]
If $a>1$, the conclusion follows from Theorem \ref{asinf}. If $a\in(0,1)$, we
fix $\nu\in(a,1)$. Then, for all $|x|\geq s_{\nu}$,
\[
\omega(x)|x|^{b}e^{a|x|}=\omega(x)|x|^{b}e^{Q(|x|)}e^{a|x|-Q(|x|)}\leq
\omega(x)|x|^{b}e^{Q(|x|)}e^{(a-\nu)|x|+\nu s_{\nu}},
\]
and using once more Theorem \ref{asinf} the conclusion follows.
\end{proof}

For $\zeta\in\mathbb{R}^{N}$ we set\
\begin{equation}
\omega_{\zeta}(x):=\omega(x-\zeta).\label{omegaz}%
\end{equation}

\begin{lemma}
\label{a<1}For each $a\in(0,1),$%
\[
\lim_{\left\vert \zeta\right\vert \rightarrow\infty}\int_{\mathbb{R}^{N}%
}\omega^{p-1}\omega_{\zeta}|\zeta|^{\frac{N-1}{2}}e^{a|\zeta|}=0.
\]

\end{lemma}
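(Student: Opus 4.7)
My plan is to split $\mathbb{R}^N$ into three regions according to which of $|x|$ or $|x-\zeta|$ is small compared with $|\zeta|$, and to estimate each contribution using the exponential decay of $\omega$ supplied by Lemma~\ref{propasin}. Fix $\nu\in(a,1)$; combining Lemma~\ref{propasin} with the continuity and boundedness of $\omega$ (a smooth, positive ground state) yields a constant $C=C_\nu>0$ with $\omega(x)\le C e^{-\nu|x|}$ for every $x\in\mathbb{R}^N$. Since $p\ge2$ and $\omega\in L^\infty$, this also gives $\omega^{p-1}(x)\le C\omega(x)\le C e^{-\nu|x|}$. Decompose $\mathbb{R}^N=A\cup B\cup D$ with $A:=\{|x|\le|\zeta|/2\}$, $B:=\{|x-\zeta|\le|\zeta|/2\}$, and $D$ the remaining region (where both $|x|,|x-\zeta|>|\zeta|/2$).

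On $A$, the estimate $|x-\zeta|\ge|\zeta|-|x|$ gives $\omega(x-\zeta)\le C e^{-\nu|\zeta|}e^{\nu|x|}$, so
$$\int_A \omega^{p-1}(x)\omega(x-\zeta)\,dx\le C e^{-\nu|\zeta|}\int \omega^{p-1}(x)\,e^{\nu|x|}\,dx.$$
The last integral converges: picking $\nu'\in(\nu,1)$, Lemma~\ref{propasin} yields $\omega^{p-1}(x)e^{\nu|x|}\le C e^{(\nu-(p-1)\nu')|x|}$, which is integrable since $(p-1)\nu'\ge\nu'>\nu$ whenever $p\ge2$. On $B$, substituting $y=x-\zeta$ and using $\omega^{p-1}(y+\zeta)\le C\omega(y+\zeta)\le C e^{-\nu|\zeta|}e^{\nu|y|}$ (valid because $p\ge2$ and $\omega$ is bounded) reduces the contribution to $C e^{-\nu|\zeta|}\int\omega(y)e^{\nu|y|}\,dy$, which is finite by the same reasoning. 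Multiplied by $|\zeta|^{(N-1)/2}e^{a|\zeta|}$, each of these contributions is $O\bigl(|\zeta|^{(N-1)/2} e^{(a-\nu)|\zeta|}\bigr)\to0$.

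On $D$ I would apply Cauchy--Schwarz,
$$\int_D \omega^{p-1}(x)\omega(x-\zeta)\,dx\le\Bigl(\int_D\omega^{2(p-1)}(x)\,dx\Bigr)^{1/2}\Bigl(\int_D\omega^2(x-\zeta)\,dx\Bigr)^{1/2}.$$
Since $|x|>|\zeta|/2$ on $D$, a polar-coordinate computation using $\omega^{2(p-1)}(x)\le C e^{-2(p-1)\nu|x|}$ gives $\int_D\omega^{2(p-1)}(x)\,dx\le C|\zeta|^{N-1}e^{-(p-1)\nu|\zeta|}$; substituting $y=x-\zeta$ and using $|y|>|\zeta|/2$ yields, analogously, $\int_D\omega^2(x-\zeta)\,dx\le C|\zeta|^{N-1}e^{-\nu|\zeta|}$. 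Therefore $\int_D\le C|\zeta|^{N-1}e^{-p\nu|\zeta|/2}$, and multiplying by $|\zeta|^{(N-1)/2}e^{a|\zeta|}$ produces $O\bigl(|\zeta|^{3(N-1)/2}e^{(a-p\nu/2)|\zeta|}\bigr)\to0$ because $p\nu/2\ge\nu>a$ for $p\ge2$.

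The main obstacle is the region $D$: since neither $|x|$ nor $|x-\zeta|$ alone is close to $|\zeta|$, a direct pointwise bound on $\omega(x-\zeta)$ or $\omega(x)$ cannot extract the full $e^{-\nu|\zeta|}$ factor needed. Cauchy--Schwarz resolves this by combining both decays, and the crucial inequality $p\nu/2\ge\nu>a$ (used to make the exponent negative after multiplying by $e^{a|\zeta|}$) relies on $p\ge2$, with $p=2$ being the tightest case.
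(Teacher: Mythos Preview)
Your proof is correct, but it takes a noticeably longer route than the paper's. The paper avoids any region decomposition or Cauchy--Schwarz by using \emph{two} exponents $\nu_1<\nu_2$ in $(a,1)$: from $\omega(x)\le C_\nu e^{-\nu|x|}$ one bounds the integrand by $Ce^{-\nu_1|x|}e^{-\nu_2|x-\zeta|}$, rewrites this as $e^{-\nu_1(|x|+|x-\zeta|)}e^{-(\nu_2-\nu_1)|x-\zeta|}$, and applies the triangle inequality $|x|+|x-\zeta|\ge|\zeta|$ to pull out a global factor $e^{-\nu_1|\zeta|}$ while the remaining $e^{-(\nu_2-\nu_1)|x-\zeta|}$ is integrable. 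This one-line trick replaces your three-region split and the Cauchy--Schwarz step on $D$. Your approach has the minor advantage of being more explicit about where the decay comes from in each regime, but the two-exponent device is shorter and makes clear that the ``hard'' region $D$ is not genuinely an obstacle: the triangle inequality already extracts the full $e^{-\nu_1|\zeta|}$ uniformly over $\mathbb{R}^N$, so no separate treatment is needed.
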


\begin{proof}
By Lemma \ref{propasin}\ we have that, for each $\nu\in(0,1),$ there exists a
constant $C_{\nu}>0$ such that
\[
\omega(x)\leq C_{\nu}e^{-\nu|x|}\qquad\text{for all }x\in\mathbb{R}^{N}.
\]
We fix $\nu_{1},\nu_{2}\in(a,1)$ with $\nu_{1}<\nu_{2}.$ In what follows, $C$
will denote different positive constants depending only on $\nu_{1}$ and
$\nu_{2}$. We have that
\begin{align*}
\int_{\mathbb{R}^{N}}\omega^{p-1}\omega_{\zeta} &  \leq C\int_{\mathbb{R}^{N}%
}e^{-\nu_{1}(p-1)|x|}e^{-\nu_{2}|x-\zeta|}\text{ }dx\leq C\int_{\mathbb{R}%
^{N}}e^{-\nu_{1}|x|}e^{-\nu_{2}|x-\zeta|}\text{ }dx\\
&  =C\int_{\mathbb{R}^{N}}e^{-\nu_{1}(|x|+|x-\zeta|)}e^{-(\nu_{2}-\nu
_{1})|x-\zeta|}\text{ }dx\leq Ce^{-\nu_{1}|\zeta|}\int_{\mathbb{R}^{N}%
}e^{-(\nu_{2}-\nu_{1})|x|}\text{ }dx\\
&  =Ce^{-\nu_{1}|\zeta|}.
\end{align*}
Therefore,
\[
0\leq\int_{\mathbb{R}^{N}}\omega^{p-1}\omega_{\zeta}|\zeta|^{\frac{N-1}{2}%
}e^{a|\zeta|}\leq C|\zeta|^{\frac{N-1}{2}}e^{-(\nu_{1}-a)|\zeta|},
\]
which implies the result.
\end{proof}

For $\zeta\in\mathbb{R}^{N}$ we define
\begin{equation}
I(\zeta):=\int_{\mathbb{R}^{N}}\left(  \frac{1}{|x|^{\alpha}}\ast\omega
^{p}\right)  \omega^{p-1}\omega_{\zeta}.\label{main}%
\end{equation}

\begin{lemma}
\label{Ia<1}For each $a\in(0,1),$
\[
\lim_{\left\vert \zeta\right\vert \rightarrow\infty}I(\zeta)\left\vert
\zeta\right\vert ^{\frac{N-1}{2}}e^{a\left\vert \zeta\right\vert }=0.
\]

\end{lemma}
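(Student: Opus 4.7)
The plan is to reduce Lemma \ref{Ia<1} directly to the already-proved Lemma \ref{a<1} by showing that the convolution factor $\frac{1}{|x|^{\alpha}}\ast\omega^{p}$ is bounded on $\mathbb{R}^{N}$. Writing $\Phi(y):=(\frac{1}{|x|^{\alpha}}\ast\omega^{p})(y)$, a uniform bound $\Phi(y)\le M$ would give
\[
I(\zeta)\;\le\;M\int_{\mathbb{R}^{N}}\omega^{p-1}\omega_{\zeta},
\]
and multiplying by $|\zeta|^{(N-1)/2}e^{a|\zeta|}$ and sending $|\zeta|\to\infty$ yields the claim by Lemma \ref{a<1}.

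To produce the uniform bound, I would first collect the integrability properties of $\omega$. By Theorem \ref{asinf}, $\omega\in L^{1}(\mathbb{R}^{N})\cap\mathcal{C}^{\infty}(\mathbb{R}^{N})$, and the asymptotic estimates together with Lemma \ref{propasin} show that $\omega(x)\to 0$ as $|x|\to\infty$. Since $\omega$ is continuous and vanishes at infinity, it is bounded, so $\omega\in L^{\infty}(\mathbb{R}^{N})$ and therefore $\omega^{p}\in L^{1}(\mathbb{R}^{N})\cap L^{\infty}(\mathbb{R}^{N})$.

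For the boundedness of $\Phi$, I would split the defining integral at radius $1$:
\[
\Phi(y)=\int_{|y-x|\le 1}\frac{\omega^{p}(x)}{|y-x|^{\alpha}}\,dx+\int_{|y-x|>1}\frac{\omega^{p}(x)}{|y-x|^{\alpha}}\,dx.
\]
The first integral is bounded by $\|\omega^{p}\|_{\infty}\int_{|z|\le 1}|z|^{-\alpha}\,dz$, which is finite because $\alpha<N$. The second integral is bounded by $\|\omega^{p}\|_{1}$. Both bounds are independent of $y$, so $\|\Phi\|_{\infty}\le M$ for some $M<\infty$.

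There is no real obstacle here; the argument is essentially a Young/Riesz-type estimate plus the already established Lemma \ref{a<1}. The only small subtlety is making sure the integrability hypotheses on $\omega$ are in hand, which follows from the asymptotics cited in Theorem \ref{asinf} and Lemma \ref{propasin}.
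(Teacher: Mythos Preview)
Your proposal is correct and follows essentially the same approach as the paper: both arguments show that $\frac{1}{|x|^{\alpha}}\ast\omega^{p}\in L^{\infty}(\mathbb{R}^{N})$ and then bound $I(\zeta)$ by a constant times $\int_{\mathbb{R}^{N}}\omega^{p-1}\omega_{\zeta}$, reducing the claim to Lemma~\ref{a<1}. The only difference is that the paper obtains the $L^{\infty}$ bound by invoking $\omega^{p}\in L^{N/(N-\alpha)}(\mathbb{R}^{N})$ (from \cite{ms}) together with \cite[Section 4.3 (9)]{ll}, whereas you give a self-contained splitting argument using $\omega^{p}\in L^{1}\cap L^{\infty}$; both are valid.
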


\begin{proof}
As $p<\frac{2N-\alpha}{N-2},$ we have that $\frac{N-\alpha}{N}\bigl(1-\frac
{1}{p}\bigr)-\frac{2}{N}<\frac{N-\alpha}{Np}$. By \cite[Section 4, Claim
1]{ms}, $|u|^{p}\in L^{\frac{N}{N-\alpha}}(\mathbb{R}^{N})$. Hence, $\frac
{1}{|x|^{\alpha}}\ast\omega^{p}\in L^{\infty}(\mathbb{R}^{N})$, cf.
\cite[Section 4.3 (9)]{ll}. Thus,
\[
0\leq I(\zeta)|\zeta|^{\frac{N-1}{2}}e^{a|\zeta|}\leq C\int_{\mathbb{R}^{N}%
}\omega^{p-1}\omega_{\zeta}|\zeta|^{\frac{N-1}{2}}e^{a|\zeta|}.
\]
From Lemma \ref{a<1} we obtain the conclusion.
\end{proof}

\begin{lemma}
\label{inf2}For every $a>1$, there exists a positive constant $k_{a}$ such
that
\[
I(\zeta)|\zeta|^{\frac{N-1}{2}}e^{a|\zeta|}\geq k_{a}\qquad\text{for all
}\left\vert \zeta\right\vert \geq1.
\]

\end{lemma}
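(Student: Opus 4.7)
The plan is to restrict the integration in the definition of $I(\zeta)$ to a fixed compact neighborhood of the origin, where the factor $\bigl(\frac{1}{|x|^{\alpha}}\ast\omega^{p}\bigr)(x)\omega^{p-1}(x)$ is bounded below by a positive constant, and then invoke Lemma \ref{propasin} to control $\omega(x-\zeta)$ pointwise from below.

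First, since $\omega$ is continuous and strictly positive by Theorem \ref{asinf}, and $\frac{1}{|x|^{\alpha}}\ast\omega^{p}$ is continuous and strictly positive on $\mathbb{R}^{N}$ (as the Riesz potential of a nonnegative nontrivial function), there exists a constant $c_{0}>0$ such that $\bigl(\frac{1}{|x|^{\alpha}}\ast\omega^{p}\bigr)(x)\omega^{p-1}(x)\geq c_{0}$ for all $x$ in the unit ball $B_{1}(0)$. Next, fix $a>1$. By Lemma \ref{propasin} there exists $R_{0}>0$ such that $\omega(y)\geq|y|^{-(N-1)/2}e^{-a|y|}$ whenever $|y|\geq R_{0}$. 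For $|\zeta|\geq R_{0}+1$ and $x\in B_{1}(0)$, the bounds $|\zeta|-1\leq|x-\zeta|\leq|\zeta|+1\leq 2|\zeta|$ combined with the estimate just mentioned yield a pointwise lower bound $\omega(x-\zeta)\geq C_{a}|\zeta|^{-(N-1)/2}e^{-a|\zeta|}$ for some positive constant $C_{a}$ independent of $x$. Integrating the resulting inequality over $B_{1}(0)$ gives $I(\zeta)\geq c_{0}C_{a}|B_{1}(0)|\,|\zeta|^{-(N-1)/2}e^{-a|\zeta|}$, which is the desired estimate on this range of $\zeta$.

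To cover the remaining annulus $1\leq|\zeta|\leq R_{0}+1$, observe that $\zeta\mapsto I(\zeta)$ is continuous (by dominated convergence, using that $\bigl(\frac{1}{|x|^{\alpha}}\ast\omega^{p}\bigr)\omega^{p-1}$ is bounded on $\mathbb{R}^{N}$ and $\omega\in L^{1}(\mathbb{R}^{N})$, as already exploited in the proof of Lemma \ref{Ia<1}) and strictly positive, so $\zeta\mapsto I(\zeta)|\zeta|^{(N-1)/2}e^{a|\zeta|}$ attains a positive minimum on this compact set. Taking $k_{a}$ to be the smaller of the two positive bounds finishes the argument.

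The conceptual point driving the proof is the choice to localize near the origin rather than near $\zeta$: around $\zeta$, the factor $\omega^{p-1}(x)$ would contribute an extra $e^{-(p-1)|\zeta|}$ of decay, which is strictly faster than the required $e^{-a|\zeta|}$ as soon as $p>2$ and $a\in(1,p-1)$, so that localization could not prove the lemma. Near the origin, on the other hand, $\omega^{p-1}(x)$ and the Riesz potential are simply bounded positive continuous functions, so all of the exponential decay in $|\zeta|$ comes from the single factor $\omega(x-\zeta)$, producing precisely the exponent $a$ supplied by Lemma \ref{propasin}. Getting the factor $|\zeta|^{(N-1)/2}$ right is then a straightforward matter of comparing $|x-\zeta|$ with $|\zeta|$ on $B_{1}(0)$.
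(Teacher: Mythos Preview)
Your proof is correct and follows essentially the same approach as the paper: both restrict the integral to the unit ball around the origin, bound the factor $\bigl(\tfrac{1}{|x|^{\alpha}}\ast\omega^{p}\bigr)\omega^{p-1}$ below by a positive constant there, and invoke Lemma~\ref{propasin} to get a pointwise lower bound on $\omega(x-\zeta)$. The only cosmetic difference is that the paper extends the lower bound $\omega(x)\geq C_{2}(1+|x|)^{-b}e^{-a|x|}$ to all of $\mathbb{R}^{N}$ (using positivity of $\omega$ on compacts) and thereby treats every $|\zeta|\geq 1$ at once, whereas you handle the compact range $1\leq|\zeta|\leq R_{0}+1$ separately via continuity of $I$.
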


\begin{proof}
Set $b:=\frac{N-1}{2}.$ Lemma \ref{propasin} asserts the existence of positive
constants $C_{a}$, $R_{a}$ such that $C_{a}|x|^{-b}e^{-a|x|}\leq\omega(x)$ if
$\left\vert x\right\vert \geq R_{a}.$ Let $C_{1}>0$ be such that
$\omega(x)\geq C_{1}e^{-a|x|}$ for all $\left\vert x\right\vert \leq R_{a}$.
Setting $C_{2}:=\min\{C_{a},C_{1}\}$ we conclude that
\[
\omega(x)\geq C_{2}(1+|x|)^{-b}e^{-a|x|}\text{\quad for all \ }x\in
\mathbb{R}^{N}.
\]
Hence,
\begin{align*}
\omega(x-\zeta)|\zeta|^{b}e^{a|\zeta|} &  \geq C_{2}(1+|x-\zeta|)^{-b}%
e^{-a|x-\zeta|}|\zeta|^{b}e^{a|\zeta|}\\
&  \geq C_{2}(1+|x-\zeta|)^{-b}|\zeta|^{b}e^{-a|x|}\qquad\text{for }x,\zeta
\in\mathbb{R}^{N}.
\end{align*}
Note that, if $\left\vert x\right\vert \leq1\leq\left\vert \zeta\right\vert $,
then $1+\left\vert x-\zeta\right\vert \leq1+\left\vert x\right\vert
+\left\vert \zeta\right\vert \leq3\left\vert \zeta\right\vert $ and so
\[
\omega(x-\zeta)|\zeta|^{b}e^{a|\zeta|}\geq C_{3}e^{-a|x|}\quad\text{for
}x,\zeta\in\mathbb{R}^{N}\text{ with }\left\vert x\right\vert \leq
1\leq\left\vert \zeta\right\vert ,
\]
where $C_{3}:=3^{-b}C_{2}$. Consequently,
\begin{align*}
I(\zeta)|\zeta|^{b}e^{a|\zeta|} &  =\int_{\mathbb{R}^{N}}\left(  \frac
{1}{|x|^{\alpha}}\ast\omega^{p}\right)  (x)\,\omega^{p-1}(x)\omega
(x-\zeta)|\zeta|^{b}e^{a|\zeta|}\,dx\\
&  \geq C_{3}\int_{|x|\leq1}\left(  \frac{1}{|x|^{\alpha}}\ast\omega
^{p}\right)  (x)\,\omega^{p-1}(x)e^{-a|x|}=:k_{a}\qquad\text{for }\left\vert
\zeta\right\vert \geq1,
\end{align*}
as claimed.
\end{proof}

\begin{remark}
\emph{As in the local case (see \cite[section 5]{cs}) it is possible to prove
that, for }$p>2,$\emph{\ there exists a positive constant }$k_{1}$\emph{\ such
that}
\[
\lim_{|\xi|\rightarrow\infty}I(\xi)|\xi|^{\frac{N-1}{2}}e^{|\xi|}=k_{1}.
\]
\emph{However, we will not need this fact.}
\end{remark}

For $\zeta\in\mathbb{R}^{N}$ we define
\begin{equation}
A(\zeta):=\int_{\mathbb{R}^{N}}V^{+}(x)\omega^{2}(x-\zeta)dx.\label{A}%
\end{equation}

\begin{lemma}
\label{apositiva}Let $M\in(0,2).$ If $V(x)\leq ce^{-\iota\left\vert
x\right\vert }$ for all $x\in\mathbb{R}^{N}$ with $c>0$ and $\iota>M,$ then
\[
\lim\limits_{\left\vert \zeta\right\vert \rightarrow\infty}A(\zeta)\left\vert
\zeta\right\vert ^{\frac{N-1}{2}}e^{M\left\vert \zeta\right\vert }=0.
\]

\end{lemma}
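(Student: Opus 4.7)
The plan is to mimic the argument of Lemma \ref{a<1}, combining an exponential upper bound for $\omega$ coming from Lemma \ref{propasin} with the exponential decay assumption on $V$, and then to use the triangle inequality $|x|+|x-\zeta|\geq|\zeta|$ to extract the factor $e^{-M|\zeta|}$.

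First, I would note that the hypothesis gives $V^{+}(x)\leq ce^{-\iota|x|}$ for all $x\in\mathbb{R}^{N}$, since $V^{+}=\max\{V,0\}$ is nonnegative and bounded above by $\max\{V,0\}\leq\max\{ce^{-\iota|x|},0\}=ce^{-\iota|x|}$. Next, Lemma \ref{propasin} implies that for every $\nu\in(0,1)$ there exists a constant $C_{\nu}>0$ such that $\omega(y)\leq C_{\nu}e^{-\nu|y|}$ for all $y\in\mathbb{R}^{N}$, exactly as exploited in the proof of Lemma \ref{a<1}. Squaring and translating gives $\omega^{2}(x-\zeta)\leq C_{\nu}^{2}e^{-2\nu|x-\zeta|}$.

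The key step is a careful choice of $\nu$. Since $M<2$ and $\iota>M$, the open interval $(M/2,\min\{1,\iota/2\})$ is nonempty, so I fix $\nu$ inside it. Then $2\nu>M$ and $\iota-2\nu>0$. Combining the two pointwise bounds,
\[
A(\zeta)=\int_{\mathbb{R}^{N}}V^{+}(x)\omega^{2}(x-\zeta)\,dx\leq cC_{\nu}^{2}\int_{\mathbb{R}^{N}}e^{-\iota|x|-2\nu|x-\zeta|}\,dx.
\]
Splitting the exponent as
\[
\iota|x|+2\nu|x-\zeta|=2\nu\bigl(|x|+|x-\zeta|\bigr)+(\iota-2\nu)|x|\geq 2\nu|\zeta|+(\iota-2\nu)|x|,
\]
by the triangle inequality, the integral is bounded by $e^{-2\nu|\zeta|}\int_{\mathbb{R}^{N}}e^{-(\iota-2\nu)|x|}\,dx=C_{\nu}'e^{-2\nu|\zeta|}$, where $C_{\nu}'<\infty$ because $\iota-2\nu>0$.

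Finally, multiplying by $|\zeta|^{(N-1)/2}e^{M|\zeta|}$ yields
\[
0\leq A(\zeta)|\zeta|^{(N-1)/2}e^{M|\zeta|}\leq cC_{\nu}^{2}C_{\nu}'\,|\zeta|^{(N-1)/2}e^{(M-2\nu)|\zeta|},
\]
and the right-hand side tends to $0$ as $|\zeta|\to\infty$ because $M-2\nu<0$. I expect no serious obstacle here; the only delicate point is checking that the interval $(M/2,\min\{1,\iota/2\})$ is nonempty, which is precisely what the hypotheses $M<2$ and $\iota>M$ guarantee.
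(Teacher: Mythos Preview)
Your argument is correct: the choice $\nu\in(M/2,\min\{1,\iota/2\})$ is exactly what is needed, and the triangle-inequality splitting of the exponent goes through cleanly. The paper does not prove this lemma directly but refers to \cite[Lemma~5.2]{cs}; your proof is the expected exponential-decay estimate and mirrors the proof of Lemma~\ref{a<1} almost verbatim, so it is in line with the paper's approach.
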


\begin{proof}
See \cite[Lemma 5.2]{cs}.
\end{proof}

\begin{lemma}
\label{h}If $f\in\mathcal{C}_{c}^{0}\left(  \mathbb{R}^{N}\right)  ,$ $q>1$
and $a\in(0,1)$, then
\[
\lim\limits_{\left\vert \zeta\right\vert \rightarrow\infty}\left(
\int_{\mathbb{R}^{N}}f(x)\omega^{q}(x-\zeta)dx\right)  \left\vert
\zeta\right\vert ^{\frac{N-1}{2}}e^{qa\left\vert \zeta\right\vert }=0.
\]

\end{lemma}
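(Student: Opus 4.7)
The plan is to combine the compact support of $f$ with the exponential decay rate of $\omega$ supplied by Lemma \ref{propasin}. The idea is that if we choose an exponential decay rate $b$ for $\omega$ that is strictly between $a$ and $1$, then the factor $e^{qa|\zeta|}$ is killed by $e^{-qb|\zeta|}$ with room to spare, while the polynomial factor $|\zeta|^{(N-1)/2}$ is dominated by the $|\zeta|^{-q(N-1)/2}$ arising from the decay estimate for $\omega^q$.

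First I would fix $b\in(a,1)$ and apply Lemma \ref{propasin} to obtain $\omega(y)|y|^{(N-1)/2}e^{b|y|}\to 0$ as $|y|\to\infty$. Combining this with the continuity (hence local boundedness) of $\omega$ on $\mathbb{R}^{N}$ yields a constant $C_{b}>0$ for which
\[
\omega(y)\leq C_{b}(1+|y|)^{-(N-1)/2}e^{-b|y|}\qquad\text{for all }y\in\mathbb{R}^{N}.
\]
Next, let $R>0$ be such that $\operatorname{supp}(f)\subset\overline{B}_{R}(0)$. For $|\zeta|\geq 2R$ and $x\in\overline{B}_{R}(0)$ one has $|x-\zeta|\geq |\zeta|-R\geq |\zeta|/2$, so
\[
\omega^{q}(x-\zeta)\leq C_{b}^{q}\bigl(1+|\zeta|/2\bigr)^{-q(N-1)/2}e^{-qb(|\zeta|-R)}.
\]
Inserting this into the integral,
\[
\left|\int_{\mathbb{R}^{N}}f(x)\omega^{q}(x-\zeta)\,dx\right|\leq\|f\|_{\infty}\,\mathrm{vol}(B_{R}(0))\,C_{b}^{q}\bigl(1+|\zeta|/2\bigr)^{-q(N-1)/2}e^{-qb(|\zeta|-R)}.
\]

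Finally, multiplying by $|\zeta|^{(N-1)/2}e^{qa|\zeta|}$ yields a bound of the form
\[
C'\,|\zeta|^{(1-q)(N-1)/2}e^{-q(b-a)|\zeta|},
\]
where $C'$ absorbs $\|f\|_{\infty}$, the volume of $B_{R}(0)$, $C_{b}^{q}$ and $e^{qbR}$. Since $q>1$ makes the exponent of $|\zeta|$ nonpositive (and in any case $N\geq 3$), and $b-a>0$ makes the exponential factor decay to zero, the right-hand side tends to $0$ as $|\zeta|\to\infty$, giving the claim. There is no real obstacle here; the only care needed is to choose $b$ strictly between $a$ and $1$, so that both the asymptotic decay estimate of Lemma \ref{propasin} applies and the cancellation with $e^{qa|\zeta|}$ leaves a genuine negative exponential.
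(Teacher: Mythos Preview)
Your proof is correct and follows essentially the same approach as the paper: both combine the compact support of $f$ with the decay estimate for $\omega$ furnished by Lemma~\ref{propasin}. The only minor tactical difference is that you pick a decay exponent $b\in(a,1)$ strictly larger than $a$, so that your final bound decays exponentially in $|\zeta|$, whereas the paper uses the exponent $a$ itself, lets the exponentials cancel exactly on $\operatorname{supp}(f)$, and concludes from the purely polynomial factor $|\zeta|^{(1-q)(N-1)/2}$.
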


\begin{proof}
Set $b:=\frac{N-1}{2}$. Let $T>0$ be such that supp$(f)\subset B_{T}(0).$ By
Lemma \ref{propasin} there exists $C>0$ such that
\[
\omega(x)\leq C(T+|x|)^{-b}e^{-a|x|}\text{\qquad for all }x\in\mathbb{R}^{N}.
\]
Therefore, if $\left\vert x\right\vert \leq T,$%
\begin{align*}
\omega^{q}(x-\zeta)\left\vert \zeta\right\vert ^{b}e^{qa\left\vert
\zeta\right\vert }  &  \leq C^{q}(T+|x-\zeta|)^{-qb}e^{-qa|x-\zeta|}\left\vert
\zeta\right\vert ^{b}e^{qa\left\vert \zeta\right\vert }\\
&  \leq C^{q}(\left\vert x\right\vert +|x-\zeta|)^{-qb}e^{-qa|x-\zeta
|}\left\vert \zeta\right\vert ^{b}e^{qa\left\vert \zeta\right\vert }\leq
C^{q}\left\vert \zeta\right\vert ^{(1-q)b}e^{qa\left\vert x\right\vert }.
\end{align*}
Consequently,
\[
\int_{\mathbb{R}^{N}}\left\vert f(x)\right\vert \omega^{q}(x-\zeta)\left\vert
\zeta\right\vert ^{b}e^{qa\left\vert \zeta\right\vert }dx\leq C^{q}\left\vert
\zeta\right\vert ^{(1-q)b}\int_{\left\vert x\right\vert \leq T}\left\vert
f(x)\right\vert e^{qa\left\vert x\right\vert }dx=:C_{1}\left\vert
\zeta\right\vert ^{(1-q)b},
\]
from which the assertion of Lemma \ref{h} follows.
\end{proof}

\section{Proof of Theorems \ref{slna1} and \ref{slna2}}

Let $Z$ be a $\Gamma$-invariant subset of $\Sigma$ and let $\lambda\in
(0,\mu_{\Gamma}(Z))$ be such that $(V_{2})$ holds (recall that we are assuming
that $V_{\infty}=1$). We choose $\nu\in(0,1)$ such that $\lambda\in
(0,\mu_{\Gamma}(Z)\nu)$, $\ \varepsilon\in\bigl(0,\frac{\mu_{\Gamma}%
(Z)\nu-\lambda}{\mu_{\Gamma}(Z)\nu+\lambda}\bigr)$ \ and a radially symmetric
cut-off function $\chi\in\mathcal{C}^{\infty}(\mathbb{R}^{N})$ such that
$0\leq\chi\leq1,$ $\chi(x)=1$ if $\left\vert x\right\vert \leq1-\varepsilon$
and $\chi(x)=0$ if $\left\vert x\right\vert \geq1.$ Let $\omega\in
H^{1}(\mathbb{R}^{N})$ be a positive ground state of problem (\ref{problim})
which is radially symmetric about the origin. For $S>0$ we define $\omega
^{S}\in H^{1}(\mathbb{R}^{N})$ by
\[
\omega^{S}(x):=\chi\left(  \frac{x}{S}\right)  \omega(x).
\]
Lemma \ref{propasin} allows to obtain the following asymptotic estimates:
\[
\left\vert \left\Vert \omega\right\Vert ^{2}-\left\Vert \omega^{S}\right\Vert
^{2}\right\vert =O\bigl(e^{-2\nu(1-\varepsilon)S}\bigr),\text{\hspace{0.3in}%
}\left\vert \mathbb{D}(\omega)-\mathbb{D}(\omega^{S})\right\vert
=O\bigl(e^{-p\nu(1-\varepsilon)S}\bigr)
\]
as $S\rightarrow\infty$, see \cite[Lemma 4.1]{ccs}. We set $\rho:=\frac
{\mu_{\Gamma}(Z)\nu+\lambda}{4\nu},$ and for every $z\in Z$ we consider the
function
\[
v_{R,z}(x):=\omega^{\rho R}(x-Rz).
\]
Note that \ supp$(v_{R,z})\subset\overline{B_{\rho R}(Rz)}$. Note also that
$\rho\in(0,1)$ because $\mu_{\Gamma}(Z)\leq2$. Therefore, since $\mathbb{R}%
^{N}\smallsetminus\Omega$ is bounded, there exists $R_{0}>0$ such that
$v_{R,z}\in H_{0}^{1}(\Omega)$ for all $z\in Z$ and $R\geq R_{0}.$

\begin{lemma}
\label{estim1}There exist $d_{0}>0$ and $\varrho_{0}>R_{0}$ such that
$v_{R,z}\in H_{0}^{1}(\Omega)$ and
\[
J_{V}(\pi(v_{R,z}))\leq c_{\infty}-d_{0}e^{-\lambda R}\hspace{0.3in}\text{for
all }z\in Z\text{ and\ }R\geq\varrho_{0}.
\]

\end{lemma}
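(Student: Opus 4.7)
The plan is to bound $J_V(\pi(v_{R,z}))$ through the projection formula (\ref{minmax}) by estimating the numerator $\|v_{R,z}\|_V^2$ and the denominator $\mathbb{D}(v_{R,z})^{1/p}$ separately, showing that the potential term $\int V v_{R,z}^2$ produces the decisive negative contribution of order $e^{-\lambda R}$ while the cut-off losses are $o(e^{-\lambda R})$. Since the support of $v_{R,z}$ lies in $\overline{B_{\rho R}(Rz)}$ with $|Rz|=R$ and $\rho<1$ (because $\mu_{\Gamma}(Z)\leq 2$ and $\lambda<\mu_{\Gamma}(Z)\nu$), the inclusion $v_{R,z}\in H_0^1(\Omega)$ holds uniformly in $z\in Z$ as soon as $(1-\rho)R$ exceeds the radius of $\mathbb{R}^{N}\smallsetminus\Omega$.

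For the translation-invariant pieces, the estimates quoted from \cite[Lemma 4.1]{ccs} with $S=\rho R$ give
\[
\|v_{R,z}\|^{2}=\|\omega\|^{2}+O\bigl(e^{-2\nu(1-\varepsilon)\rho R}\bigr),\qquad \mathbb{D}(v_{R,z})=\mathbb{D}(\omega)+O\bigl(e^{-p\nu(1-\varepsilon)\rho R}\bigr),
\]
uniformly in $z$. The identity $2\nu(1-\varepsilon)\rho=(1-\varepsilon)(\mu_{\Gamma}(Z)\nu+\lambda)/2$ together with the bound $\varepsilon<(\mu_{\Gamma}(Z)\nu-\lambda)/(\mu_{\Gamma}(Z)\nu+\lambda)$ fixed in the setup yields $2\nu(1-\varepsilon)\rho>\lambda$, and since $p\geq 2$ the second exponent is also $>\lambda$; hence both remainders are $o(e^{-\lambda R})$. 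For the potential term, since $|y|\geq(1-\rho)R\geq r_0$ on the support of $v_{R,z}$ for $R$ large, hypothesis $(V_2)$ gives $V(y)\leq -c_0 e^{-\lambda|y|}$ there. Changing variables $x=y-Rz$, using $|z|=1$ together with $|x+Rz|\leq|x|+R$, and restricting to a fixed ball $|x|\leq 1$ on which $\omega^{\rho R}=\omega$ for $R$ large,
\[
\int V v_{R,z}^{2}\leq -c_{0}\int(\omega^{\rho R}(x))^{2}e^{-\lambda|x+Rz|}\,dx\leq -c_{1}e^{-\lambda R},
\]
where $c_{1}:=c_{0}\int_{|x|\leq 1}\omega^{2}(x)e^{-\lambda|x|}\,dx>0$ is independent of $z\in Z$.

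Combining the three estimates yields, for $R$ sufficiently large,
\[
\|v_{R,z}\|_{V}^{2}\leq\|\omega\|^{2}-\tfrac{c_{1}}{2}e^{-\lambda R},\qquad \mathbb{D}(v_{R,z})=\mathbb{D}(\omega)+o(e^{-\lambda R}),
\]
uniformly in $z\in Z$. Since $\omega\in\mathcal{N}_{\infty}$ gives $\|\omega\|^{2}=\mathbb{D}(\omega)$ and $c_{\infty}=\tfrac{p-1}{2p}\|\omega\|^{2}$ via (\ref{minmax}) applied with $u=\omega$, a first-order Taylor expansion of $(X,Y)\mapsto X^{p/(p-1)}Y^{-1/(p-1)}$ around $(\|\omega\|^{2},\mathbb{D}(\omega))$, whose $X$-derivative at that point equals $p/(p-1)>0$, delivers $J_{V}(\pi(v_{R,z}))\leq c_{\infty}-d_{0}e^{-\lambda R}$ for a suitable $d_{0}>0$ and all $R\geq\varrho_{0}$. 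The only non-routine point is the bit of arithmetic tying the exponent $\lambda$ in the negative contribution of $V$ to the exponents $2\nu(1-\varepsilon)\rho$ and $p\nu(1-\varepsilon)\rho$ that govern the cut-off losses; this is precisely why $\rho$ and $\varepsilon$ were chosen as they were at the start of the section. Everything else is uniform in $z\in Z$ because $|z|=1$ makes all the estimates purely translation-invariant.
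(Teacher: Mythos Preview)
The paper does not actually prove this lemma; it simply cites \cite[Lemma~4.2]{ccs} (with $A=0$) and moves on. Your argument is a correct, self-contained reconstruction of what that citation delivers: you control the cut-off losses in $\Vert\cdot\Vert^{2}$ and $\mathbb{D}$ by the displayed $O(e^{-2\nu(1-\varepsilon)\rho R})$ and $O(e^{-p\nu(1-\varepsilon)\rho R})$ bounds, extract the negative $-c_{1}e^{-\lambda R}$ from the potential via $(V_{2})$ and the triangle inequality $|x+Rz|\leq|x|+R$, and then feed both into the projection formula~(\ref{minmax}) via a first-order expansion. The arithmetic you single out---that the choice of $\varepsilon$ forces $2\nu(1-\varepsilon)\rho>\lambda$---is precisely the reason the constants $\nu,\varepsilon,\rho$ were fixed as they were at the top of the section, so your proof dovetails with the paper's setup. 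One small point worth making explicit for the final Taylor step: you only have an \emph{inequality} $\|v_{R,z}\|_V^{2}\leq\|\omega\|^{2}-\tfrac{c_1}{2}e^{-\lambda R}$, so you are implicitly using that $X\mapsto X^{p/(p-1)}Y^{-1/(p-1)}$ is increasing in $X$ to pass to the bound before expanding; this is harmless but should be stated.
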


\begin{proof}
This is a special case of \cite[Lemma 4.2]{ccs}\ with $A=0.$
\end{proof}

Let $\phi:\Gamma\rightarrow\mathbb{Z}/2$ be a continuous group homomorphism
and set $G:=\ker\phi$. We fix $R\geq\varrho_{0},$ and for $z\in Z$ we define
\begin{equation}
\theta(z):={\ {\textstyle\sum\limits_{gz\in\Gamma z}} }\phi(g)v_{R,gz}%
.\label{theta}%
\end{equation}

\begin{proposition}
\label{estimps}If either $\phi\equiv1$ or $Z\subset\Sigma\smallsetminus
\Sigma_{0}$, then $\theta(z)$ is well defined. $\theta(z)$ is $\phi
$-equivariant and
\[
J_{V}(\pi(\theta(z)))\leq\ell(\Gamma)\left(  c_{\infty}-d_{0}e^{-\lambda
R}\right)  \hspace{0.3in}\text{for all }z\in Z.
\]
If moreover $Z\neq\emptyset$, then$\ c_{\Omega,V}^{\phi}<\ell(\Gamma
)c_{\infty}.$
\end{proposition}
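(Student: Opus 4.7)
The plan is to verify the three parts of the statement in order, then derive the strict inequality by evaluating at any $z\in Z$.

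\emph{Well-definedness.} For $\theta(z)$ to depend only on $z$ and not on the choice of representatives for the orbit elements $gz$, one needs $\phi$ to be trivial on the stabilizer $\Gamma_z:=\{g\in\Gamma:gz=z\}$. This is automatic when $\phi\equiv 1$. When $Z\subset\Sigma\smallsetminus\Sigma_0$, I would argue that any $h\in\Gamma_z$ with $\phi(h)=-1$ would permit the choice $\gamma:=h$ in the definition of $\Sigma_0$, giving $G(\gamma z)=G(hz)=Gz$ and contradicting $z\notin\Sigma_0$; hence $\Gamma_z\subset\ker\phi=G$, and the weight $\phi(g)$ is well defined on $gz\in\Gamma z$.

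\emph{Equivariance.} Because $\omega^{\rho R}$ is radial and every $h\in\Gamma\subset O(N)$ preserves Euclidean distance, a direct computation gives $v_{R,gz}(h^{-1}x)=\omega^{\rho R}(h^{-1}x-Rgz)=\omega^{\rho R}(x-Rhgz)=v_{R,hgz}(x)$. Substituting this into $\theta(z)(h^{-1}x)$ and reindexing via $g':=hg$ (using $\phi(g)=\phi(h)\phi(g')$ since $\phi$ takes values in $\{\pm 1\}$) yields $\theta(z)(h^{-1}x)=\phi(h)\theta(z)(x)$, so $\theta(z)\in H_0^1(\Omega)^\phi$.

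\emph{Energy estimate.} The key point is that the translates $v_{R,gz}$, $gz\in\Gamma z$, have pairwise disjoint supports. Since each $v_{R,gz}$ is supported in $\overline{B_{\rho R}(Rgz)}$, and the choice $\rho=(\mu_\Gamma(Z)\nu+\lambda)/(4\nu)$ combined with $\lambda<\mu_\Gamma(Z)\nu$ forces $2\rho<\mu_\Gamma(Z)\le\mu(\Gamma z)$, distinct centres $Rg_1z$ and $Rg_2z$ are separated by more than $2\rho R$. This disjointness immediately yields
\[
\|\theta(z)\|_V^2=\sum_{gz\in\Gamma z}\|v_{R,gz}\|_V^2\qquad\text{and}\qquad|\theta(z)(x)|^p=\sum_{gz\in\Gamma z}|v_{R,gz}(x)|^p\text{ for every }x,
\]
and the latter, combined with the nonnegativity of all cross terms in the double integral defining $\mathbb{D}$, gives $\mathbb{D}(\theta(z))\ge\sum_{gz\in\Gamma z}\mathbb{D}(v_{R,gz})$. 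The $\Gamma$-invariance of $V$ and the radiality of $\omega^{\rho R}$ make both $\|v_{R,gz}\|_V^2$ and $\mathbb{D}(v_{R,gz})$ independent of the representative $g\in\Gamma$, so, together with $\#\Gamma z=\ell(\Gamma)$ (which holds because $z\in\Sigma$), the two relations upgrade to $\|\theta(z)\|_V^2=\ell(\Gamma)\|v_{R,z}\|_V^2$ and $\mathbb{D}(\theta(z))\ge\ell(\Gamma)\mathbb{D}(v_{R,z})$. Substituting into (\ref{minmax}) and using $p/(p-1)-1/(p-1)=1$ gives
\[
J_V(\pi(\theta(z)))\le\ell(\Gamma)J_V(\pi(v_{R,z}))\le\ell(\Gamma)(c_\infty-d_0e^{-\lambda R}),
\]
with the last inequality coming from Lemma \ref{estim1}.

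If $Z\ne\emptyset$, pick any $z\in Z$; then $\theta(z)$ is nonzero (its summands are nonzero and have disjoint supports) and $\phi$-equivariant, so $\pi(\theta(z))\in\mathcal{N}_{\Omega,V}^\phi$ and $c_{\Omega,V}^\phi\le J_V(\pi(\theta(z)))<\ell(\Gamma)c_\infty$ because $d_0>0$. The main obstacle will be the bookkeeping in the energy step: one must exploit disjoint supports both for $\|\cdot\|_V^2$ (where it gives equality) and for $\mathbb{D}$ (where it only gives a lower bound via discarding positive cross terms), and verify that the $\ell(\Gamma)$-factors appearing in numerator and denominator of the min--max formula collapse to a single clean factor of $\ell(\Gamma)$ on $J_V\circ\pi$.
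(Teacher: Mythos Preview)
Your proof is correct and follows essentially the same approach as the paper's: the well-definedness argument via the stabilizer, the disjoint-support computation giving $\|\theta(z)\|_V^2=\ell(\Gamma)\|v_{R,z}\|_V^2$ and $\mathbb{D}(\theta(z))\ge\ell(\Gamma)\mathbb{D}(v_{R,z})$, and the substitution into (\ref{minmax}) together with Lemma~\ref{estim1} are exactly what the paper does. Your version simply spells out the equivariance calculation and the pointwise identity $|\theta(z)|^p=\sum_{gz}|v_{R,gz}|^p$ that the paper leaves implicit.
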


\begin{proof}
Let $z\in Z.$ If $g_{1},g_{2}\in\Gamma$ are such that $g_{1}z=g_{2}z$, then
$g_{2}^{-1}g_{1}z=z.$ Hence, if either $\phi\equiv1$ or $z\notin\Sigma_{0},$
it must be true that $\phi(g_{2}^{-1}g_{1})=1.$ Thus $\phi(g_{1})=\phi
(g_{2}).$ This shows that $\theta(z)$ is well defined. It is clearly $\phi$-equivariant.

\noindent On the other hand, since $\left\vert Rg_{1}z-Rg_{2}z\right\vert \geq
R\mu_{\Gamma}(Z)>2\rho R$ when $g_{1}z\neq g_{2}z,$ we have that
supp$(v_{R,g_{1}z})\cap$ supp$(v_{R,g_{2}z})=\emptyset.$ Consequently,
$\Vert\theta(z)\Vert_{V}^{2}=\ell(\Gamma)\Vert v_{R,z}\Vert_{V}^{2}$
and$\ \mathbb{D}(\theta(z))>\ell(\Gamma)\mathbb{D}(v_{R,z}).$ From
(\ref{minmax}) and Lemma \ref{estim1} we obtain
\begin{align*}
J_{V}(\pi(\theta(z)))  &  \leq\frac{p-1}{2p}\left(  \dfrac{\ell(\Gamma)\Vert
v_{R,z}\Vert_{V}^{2}}{\left[  \ell(\Gamma)\mathbb{D}(v_{R,z})\right]
^{\frac{1}{p}}}\right)  ^{\frac{p}{p-1}}\\
&  =\ell(\Gamma)J_{V}(\pi(v_{R,z}))\leq\ell(\Gamma)\left(  c_{\infty}%
-d_{0}e^{-\lambda R}\right)  .
\end{align*}
Finally, since $\pi(\theta(z))\in\mathcal{N}_{\Omega,V}^{\phi},$ we conclude
that $c_{\Omega,V}^{\phi}<\ell(\Gamma)c_{\infty}.$
\end{proof}

\smallskip

\begin{proof}
[Proof of Theorem \ref{slna1}]Let $\phi\equiv1,$ so that $\Gamma=G.$ If
assumption $(V_{1})$ holds for $\lambda\in(0,\mu^{G}),$ we choose $\zeta
\in\Sigma$ such that $\mu(G\zeta)\in(\lambda,\mu^{G}]$ and define $Z:=G\zeta.$
Thus $\mu_{G}(Z)=\mu(G\zeta)$ and assumption $(V_{2})$ holds for $\lambda
\in(0,\mu_{G}(Z)).$ Hence, we may apply Proposition \ref{estimps} to these
data to conclude that $c_{\Omega,V}^{G}<\ell(G)c_{\infty}.$ Corollary
\ref{corps}\ then asserts that $J_{V}$ satisfies condition $(PS)_{c}^{G}$ on
$\mathcal{N}_{\Omega,V}^{G}$ for $c:=c_{\Omega,V}^{G}.$ Therefore, there
exists $u\in\mathcal{N}_{\Omega,V}^{G}$ such that $J_{V}(u)=c_{\Omega,V}^{G}.$
Finally, observe that $\left\vert u\right\vert \in\mathcal{N}_{\Omega,V}^{G}$
and $J_{V}(\left\vert u\right\vert )=J_{V}(u).$ Hence problem (\ref{prob}) has
a $G$-invariant positive solution $\left\vert u\right\vert $ satisfying
$J_{V}(\left\vert u\right\vert )<\ell(G)c_{\infty}.$
\end{proof}

\smallskip

\begin{proof}
[Proof of Theorem \ref{slna2}]$\mathcal{N}_{\Omega,V}^{\phi}$ is a
$\mathcal{C}^{2}$-manifold and $J_{V}:\mathcal{N}_{\Omega,V}^{\phi}%
\rightarrow\mathbb{R}$ is an even $\mathcal{C}^{2}$-function, which is bounded
from below and satisfies $(PS)_{c}^{\phi}$ on $\mathcal{N}_{\Omega,V}^{\phi}$
for all $c<\ell(\Gamma)c_{\infty}$. Therefore, if $d:=\ell(\Gamma)\left(
c_{\infty}-d_{0}e^{-\lambda R}\right)  ,$ then$\ J_{V}$ has at least
\[
\text{genus}(\mathcal{N}_{\Omega,V}^{\phi}\cap J_{V}^{d})
\]
pairs of critical points $\pm u$ with $J_{V}(u)\leq d,$ where $J_{V}%
^{d}:=\{u\in H_{0}^{1}(\Omega):J_{V}(u)\leq d\}.$

\noindent The map $\theta:Z\rightarrow\mathcal{N}_{\Omega,V}^{\phi}\cap
J_{V}^{d}$ defined by (\ref{theta}) is continuous. Furthermore, $\theta
(gz)=\theta(z)$ for all $g\in G$\ and\ $\theta(\gamma z)=-\theta(z)$ if
$\phi(\gamma)=-1.$ Consequently, $\theta$ induces a continuous map
$\widehat{\theta}:Z/G\rightarrow\mathcal{N}_{\Omega,V}^{\phi}\cap J_{V}^{d},$
given by $\widehat{\theta}(Gz):=\theta(z),$ which satisfies $\widehat{\theta
}((-1)\cdot Gz)=-\widehat{\theta}(Gz)$ for all $z\in Z.$ This implies that
\[
\text{genus}(Z/G)\leq\text{ genus}(\mathcal{N}_{\Omega,V}^{\phi}\cap J_{V}%
^{d})
\]
and concludes the proof.
\end{proof}

\section{Proof of Theorems \ref{slna3} and \ref{slna4}}

Let $\phi:\Gamma\rightarrow\mathbb{Z}/2$ be a continuous group homomorphism
and set $G:=\ker\phi.$ Let $\omega\in H^{1}(\mathbb{R}^{N})$ be a positive
ground state of problem (\ref{problim}) which is radially symmetric about the
origin, and let $Z$ be a nonempty $\Gamma$-invariant subset of $\Sigma.$ If
$\phi$ is an epimorphism, we also assume that $Z\subset\Sigma\smallsetminus
\Sigma_{0}$. Thus, for $z\in Z$ and $R>0,$ the function
\begin{equation}
\sigma_{Rz}:={\textstyle\sum\limits_{gz\in\Gamma z}}\phi(g)\omega_{Rgz}%
,\quad\text{ where \ }\omega_{\zeta}(x):=\omega(x-\zeta),\label{sigma}%
\end{equation}
is well defined and $\phi$-equivariant (see Proposition \ref{estimps}). In
addition, we assume that \smallskip

\noindent$(Z_{\ast})$ $\ \mu^{\Gamma}(Z)<2$ \ and \ there exists $a_{0}>1$
such that
\[
\text{dist}(\gamma z,Gz)\geq a_{0}\mu(Gz)\quad\text{for any }z\in
Z\text{\ and\ }\gamma\in\Gamma\smallsetminus G.
\]

We choose $R_{0}>0$ such that $\left(  \mathbb{R}^{N}\smallsetminus
\Omega\right)  \subset B_{R_{0}}(0),$ and a radially symmetric cut-off
function $\chi\in\mathcal{C}^{\infty}(\mathbb{R}^{N})$ such that $0\leq
\chi(x)\leq1,$ $\chi(x)=0$ if $\left\vert x\right\vert \leq R_{0}$ and
$\chi(x)=1$ if $\left\vert x\right\vert \geq2R_{0}$. Observe that $\chi
\sigma_{R}\in H_{0}^{1}(\Omega)^{\phi}$. We shall prove the following result.

\begin{proposition}
\label{sub}If $Z$ and $V$ satisfy $(Z_{\ast})$ and $(V_{4})\ $then there exist
$c_{0},R_{0}>0$ and $\beta>1$ such that
\begin{equation}
\dfrac{\Vert\chi\sigma_{Rz}\Vert_{V}^{2}}{\mathbb{D}(\chi\sigma_{Rz}%
)^{\frac{1}{p}}}\leq\bigl(\ell(\Gamma)\left\Vert \omega\right\Vert
^{2}\bigr)^{\frac{p-1}{p}}-c_{0}e^{-\beta R}\quad\text{for any \ }R\geq
R_{0},\text{ }z\in Z.\label{lhs}%
\end{equation}
Consequently, $c_{\Omega,V}^{\phi}<\ell(\Gamma)c_{\infty}$.
\end{proposition}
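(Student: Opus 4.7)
The plan is to derive (\ref{lhs}) first; the bound $c_{\Omega,V}^\phi<\ell(\Gamma)c_\infty$ then follows automatically. Indeed, the ratio on the left hand side of (\ref{lhs}), raised to the power $p/(p-1)$ and multiplied by $(p-1)/(2p)$, equals $J_V(\pi(\chi\sigma_{Rz}))$ by (\ref{minmax}), which majorizes $c_{\Omega,V}^\phi$ since $\pi(\chi\sigma_{Rz})\in\mathcal{N}_{\Omega,V}^\phi$. A first order Taylor expansion of $t\mapsto t^{p/(p-1)}$, combined with the identities $\|\omega\|^2=\mathbb{D}(\omega)$ (from $\omega\in\mathcal{N}_\infty$) and $c_\infty=\frac{p-1}{2p}\|\omega\|^2$, turns (\ref{lhs}) into $c_{\Omega,V}^\phi\le \ell(\Gamma)c_\infty-c_1 e^{-\beta R}$ for large $R$.

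For the numerator, split $\|\chi\sigma_{Rz}\|_V^2=\|\sigma_{Rz}\|^2+(\|\chi\sigma_{Rz}\|^2-\|\sigma_{Rz}\|^2)+\int V(\chi\sigma_{Rz})^2$. The cutoff discrepancy is exponentially negligible by Lemma \ref{propasin}. Testing the Choquard equation for $\omega$ against each translate gives $\langle\omega_{Rgz},\omega_{Rhz}\rangle=I(R(hz-gz))$, so
\[
\|\sigma_{Rz}\|^2=\ell(\Gamma)\|\omega\|^2+\sum_{gz\neq hz}\phi(g)\phi(h)\,I(R(hz-gz))\;=:\;\ell(\Gamma)\|\omega\|^2+A.
\]
Condition $(Z_*)$ places same-sign pairs $gz,hz$ at separation $\ge R\mu(Gz)$ and opposite-sign pairs at separation $\ge a_0 R\mu(Gz)$ with $a_0>1$. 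Since $I$ decays exponentially (Lemmas \ref{Ia<1} and \ref{inf2}), the same-sign contributions dominate and $A>0$. The potential term is handled by $(V_4)$ together with Lemmas \ref{apositiva} and \ref{h}: the strict gap $\kappa>\mu^\Gamma(Z)$ lets us pick $M$ with $\mu^\Gamma(Z)<M<\min(\kappa,2)$ so that $|\int V(\chi\sigma_{Rz})^2|=o(e^{-MR})$, which is smaller than the $R^{-\alpha}$ gain produced below.

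For the denominator, I rely on the pointwise expansion, valid for large $R$ because each $\omega_{Rgz}$ is locally dominant near $Rgz$:
\[
|\sigma_{Rz}|^p=\sum_{gz}\omega_{Rgz}^p+p\sum_{gz\neq hz}\phi(g)\phi(h)\,\omega_{Rgz}^{p-1}\omega_{Rhz}+r_R,
\]
with remainder $r_R$ small in the $L^{2N/(2N-\alpha)}$ norm demanded by (\ref{desigualdad}). Substituting into $\mathbb{D}(\sigma_{Rz})=\int\!\!\int |\sigma_{Rz}|^p(x)|\sigma_{Rz}|^p(y)|x-y|^{-\alpha}\,dx\,dy$, the product of the sign-independent leading terms contributes $\ell(\Gamma)\mathbb{D}(\omega)+\sum_{gz\neq hz}\int\!\!\int\omega_{Rgz}^p(x)\omega_{Rhz}^p(y)|x-y|^{-\alpha}\,dx\,dy$; after translating $x\mapsto x+Rgz$, $y\mapsto y+Rhz$ and expanding $|\cdot|^{-\alpha}$ around $R(hz-gz)$, this cross sum equals $CR^{-\alpha}(1+o(1))$ with $C>0$. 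The first-order sign-dependent correction reproduces $2pA$ via the same integration by parts used for the norm, so $\mathbb{D}(\chi\sigma_{Rz})\ge \ell(\Gamma)\|\omega\|^2+2pA+CR^{-\alpha}+o(R^{-\alpha})$.

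Combining the two expansions through the elementary identity $(1+a)/(1+b)^{1/p}=1+a-b/p+O((|a|+|b|)^2)$ produces
\[
\frac{\|\chi\sigma_{Rz}\|_V^2}{\mathbb{D}(\chi\sigma_{Rz})^{1/p}}\le (\ell(\Gamma)\|\omega\|^2)^{(p-1)/p}-\frac{A+CR^{-\alpha}/p-o(e^{-MR})}{(\ell(\Gamma)\|\omega\|^2)^{1/p}},
\]
and the negative correction is at least of order $R^{-\alpha}$. Since $R^{-\alpha}\ge c_0e^{-\beta R}$ for every fixed $\beta>0$ once $R$ is large, (\ref{lhs}) holds for any chosen $\beta>1$ with suitable $c_0,R_0$. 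The main technical obstacle is making the pointwise expansion of $|\sigma_{Rz}|^p$ rigorous and bounding the remainder $r_R$ in the Hardy--Littlewood--Sobolev dual exponent $L^{2N/(2N-\alpha)}$, which is delicate in the overlap region where more than one bump is comparable; a secondary point is the rate calibration of $M$ in Lemmas \ref{apositiva} and \ref{h}, where the gap $\kappa>\mu^\Gamma(Z)$ is precisely what is needed for the $V$-loss to be absorbed by the polynomial gain.
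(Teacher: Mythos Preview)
Your approach is sound but diverges from the paper's in an interesting way. The paper never attempts a Taylor expansion of $|\sigma_{Rz}|^p$; it applies the elementary pointwise inequalities of Lemmas~\ref{l0}--\ref{l1} (together with Lemma~\ref{interD} for the mixed-sign terms) to obtain
\[
\mathbb{D}(\sigma_{Rz})\ge\ell(\Gamma)\mathbb{D}(\omega)+b_p\,\varepsilon_{Rz}+o(\varepsilon_{Rz}),\qquad b_p=\begin{cases}2(p-1)&p>2,\\4&p=2,\end{cases}
\]
and then the calculus fact in Lemma~\ref{razon}, whose hypothesis $b_p\cdot\tfrac1p>1$ is exactly what is needed, delivers a gain of the \emph{exponential} order $\varepsilon_{Rz}$. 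The entire machinery of Lemmas~\ref{epsilongorro}--\ref{op2}, together with the precise constants in $(Z_*)$ and $(V_4)$, exists to force all error terms to be $o(\varepsilon_{Rz})$, a stringent demand since $\varepsilon_{Rz}$ itself decays exponentially. You instead single out the off-diagonal Riesz interactions $\sum_{gz\ne hz}\int\!\!\int\omega_{Rgz}^p(x)\omega_{Rhz}^p(y)|x-y|^{-\alpha}\,dx\,dy\sim CR^{-\alpha}$, which are nonnegative and which the paper's inequality simply throws away. This polynomial gain is a genuinely nonlocal effect and, once your remainder is shown to be exponentially small in $L^{2N/(2N-\alpha)}$ (this is routine: split into near-bump and overlap regions and use Lemma~\ref{propasin}; in the overlap region every term is already $O(e^{-cR})$), it swamps every other correction and yields the stated bound with room to spare---indeed with only the crudest use of $(Z_*)$ and $(V_4)$. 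What the paper's route buys is a fully rigorous argument via global inequalities with no expansion to justify; what yours buys is a much stronger improvement $R^{-\alpha}$ in place of $e^{-\beta R}$.
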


We require some preliminary lemmas.

\begin{lemma}
\label{l0}\emph{(i)} If $p\geq2$ and $a_{1},\ldots,a_{n}\geq0,$ then
\[
\left\vert {\ {\textstyle\sum\limits_{i=1}^{n}} }a_{i}\right\vert ^{p}%
\geq{{\textstyle\sum\limits_{i=1}^{n}} }a_{i}^{p}+(p-1){\ {\textstyle\sum
\limits_{i\neq k}} }a_{i}^{p-1}a_{k}.
\]
\emph{(ii)} If $p\geq2$ and $a,b\geq0,$ then
\[
\left\vert a-b\right\vert ^{p}\geq a^{p}+b^{p}-p\left(  a^{p-1}b+ab^{p-1}%
\right)  .
\]

\end{lemma}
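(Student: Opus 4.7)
The plan is to prove (i) and (ii) as two independent elementary estimates, each reducing to a single convexity or monotonicity observation.

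For (i), my approach is to bypass induction on $n$ by applying the tangent-line inequality for the convex function $f(x) = x^{p-1}$ on $[0,\infty)$ (convex because $p - 1 \geq 1$). Setting $S := \sum_{i=1}^n a_i$, for each index $k$ the tangent at the base point $a_k$ evaluated at $S$ gives
$$S^{p-1} \geq a_k^{p-1} + (p-1)\, a_k^{p-2}(S - a_k).$$
I would then multiply by $a_k$ and sum over $k$, so that the left-hand side becomes $S \cdot S^{p-1} = S^p$ while the right-hand side becomes $\sum_k a_k^p + (p-1)\sum_k a_k^{p-1}(S - a_k)$. A direct relabeling $\sum_k a_k^{p-1}(S - a_k) = \sum_k a_k^{p-1}\sum_{l \neq k} a_l = \sum_{i \neq k} a_i^{p-1} a_k$ then produces exactly the stated inequality.

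For (ii), the inequality is symmetric in $a$ and $b$, so I may assume $a \geq b \geq 0$ and replace $|a-b|$ by $a-b$. The key estimate is the fundamental theorem of calculus bound
$$a^p - (a - b)^p \;=\; \int_{a-b}^{a} p\, t^{p-1}\, dt \;\leq\; p\, a^{p-1}\, b,$$
obtained by bounding $t^{p-1} \leq a^{p-1}$ on the interval of integration (using $p-1 \geq 0$ and monotonicity). Combining this with the trivial estimate $b^p = b \cdot b^{p-1} \leq p\, a\, b^{p-1}$, which holds because $b \leq a$ and $p \geq 1$, and adding the two bounds yields precisely $(a-b)^p \geq a^p + b^p - p\left(a^{p-1} b + a\, b^{p-1}\right)$.

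I do not expect any serious obstacle. The only minor subtlety is that in (i) the factor $a_k^{p-2}$ requires some interpretation when $a_k = 0$ and $p = 2$, but after multiplication by $a_k$ every term with $a_k = 0$ vanishes identically, so the summation step is routine and the argument goes through without modification.
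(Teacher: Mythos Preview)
Your argument is correct in both parts. The paper itself does not prove this lemma but simply cites Lemma~4 of Cerami--Clapp~\cite{cc}, so there is no in-paper proof to compare against; your write-up is a self-contained replacement. The tangent-line trick for $x\mapsto x^{p-1}$ in~(i), followed by multiplication by $a_k$ and summation, is clean and avoids induction; your remark that the $a_k=0$ terms vanish after multiplication handles the only delicate point. In~(ii) the mean-value bound $a^p-(a-b)^p\le p\,a^{p-1}b$ combined with $b^p\le p\,a\,b^{p-1}$ (valid since $b\le a$ and $p\ge1$) gives exactly the claim.
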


\begin{proof}
See Lemma 4 in \cite{cc}.
\end{proof}

\begin{lemma}
\label{l1}If $p\geq2,$ $A={{\textstyle\sum\limits_{i=1}^{n}} }a_{i},$
$\tilde{A}={\ {\textstyle\sum\limits_{i=1}^{n}} }\tilde{a}_{i},$
$B={\ {\textstyle\sum\limits_{i=1}^{n}} }b_{i}$ \text{ and }$\tilde
{B}={\ {\textstyle\sum\limits_{i=1}^{n}} }\tilde{b}_{i}$ with $a_{i},\tilde
{a}_{i},b_{i},\tilde{b}_{i}\geq0,$ then
\begin{align}
{A}^{p}{B}^{p}  &  \geq{\ {\textstyle\sum\limits_{i=1}^{n}} }a_{i}^{p}%
b_{i}^{p}+(p-1)\left(  {\textstyle\sum\limits_{j\neq m}} a_{j}^{p}b_{j}%
^{p-1}b_{m}+{\ {\textstyle\sum\limits_{i\neq k}} }b_{i}^{p}a_{i}^{p-1}%
a_{k}\right)  ,\label{1}\\
{A}^{2}{B}^{2}  &  \geq{\ {\textstyle\sum\limits_{i=1}^{n}} }a_{i}^{2}%
b_{i}^{2}+2\left(  {\textstyle\sum\limits_{j\neq m}} a_{j}^{2}b_{j}%
b_{m}+{\ {\textstyle\sum\limits_{i\neq k}} }b_{i}^{2}a_{i}a_{k}\right)
,\label{2}\\
\left\vert A-\tilde{A}\right\vert ^{p}\left\vert B-\tilde{B}\right\vert ^{p}
&  \geq{A}^{p}{B}^{p}+\tilde{A}^{p}\tilde{B}^{p}\label{3}\\
&  -pn^{p-1}\left(  B^{p}+\tilde{B}^{p}\right)  \left[  \left(
{\ {\textstyle\sum\limits_{i=1}^{n}} }a_{i}^{p-1}\right)  \tilde{A}+\left(
{\ {\textstyle\sum\limits_{i=1}^{n}} }\tilde{a}_{i}^{p-1}\right)  A\right]
\nonumber\\
&  -pn^{p-1}\left(  A^{p}+\tilde{A}^{p}\right)  \left[  \left(
{\ {\textstyle\sum\limits_{i=1}^{n}} }b_{i}^{p-1}\right)  \tilde{B}+\left(
{\ {\textstyle\sum\limits_{i=1}^{n}} }\tilde{b}_{i}^{p-1}\right)  B\right]
.\nonumber
\end{align}

\end{lemma}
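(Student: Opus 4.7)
Parts (i) and (ii) follow by a termwise expansion from Lemma \ref{l0}(i). For (i), I apply Lemma \ref{l0}(i) separately to $A^p$ and to $B^p$ and multiply the two lower bounds: the product $(\sum_i a_i^p)(\sum_j b_j^p)$ dominates the diagonal $\sum_i a_i^p b_i^p$; each cross term $(\sum_i a_i^p)\cdot(p-1)\sum_{j\neq m}b_j^{p-1}b_m$ dominates $(p-1)\sum_{j\neq m}a_j^p b_j^{p-1} b_m$ (by picking $i=j$ and discarding the remaining nonnegative summands); and the product of the two $(p-1)$-corrections is nonnegative and is thrown away. For (ii), I expand $A^2B^2$ directly as a fourfold sum over indices; the coefficient $2$ in front of the cross sums arises because in $(\sum_i a_i^2)(\sum_{j\neq m}b_j b_m)$ both $i=j$ and $i=m$ each contribute a copy of $\sum_{j\neq m}a_j^2 b_j b_m$ after using the symmetry $j\leftrightarrow m$.

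\textbf{Part (iii)} is the main obstacle. Set $F_A:=A^p+\tilde A^p-p(A^{p-1}\tilde A+A\tilde A^{p-1})$ and define $F_B$ analogously. Lemma \ref{l0}(ii) gives $|A-\tilde A|^p\geq F_A$ and $|B-\tilde B|^p\geq F_B$, but either of $F_A,F_B$ may be negative, so these inequalities cannot be multiplied directly. I would split into cases. If $F_A,F_B\geq 0$, then $|A-\tilde A|^p|B-\tilde B|^p\geq F_AF_B$; expanding the product and discarding the nonnegative terms $p^2(A^{p-1}\tilde A+A\tilde A^{p-1})(B^{p-1}\tilde B+B\tilde B^{p-1})$ and $A^p\tilde B^p+\tilde A^p B^p$ yields
\[
F_AF_B\geq A^pB^p+\tilde A^p\tilde B^p-p(B^p+\tilde B^p)(A^{p-1}\tilde A+A\tilde A^{p-1})-p(A^p+\tilde A^p)(B^{p-1}\tilde B+B\tilde B^{p-1}),
\]
and applying the crude bound $X^{p-1}\leq n^{p-1}\sum_i x_i^{p-1}$ (valid for $X=\sum_i x_i$ with $x_i\geq 0$, since $X\leq n\max_i x_i$) to each of $A^{p-1},\tilde A^{p-1},B^{p-1},\tilde B^{p-1}$ produces the desired inequality.

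If instead $F_A<0$, then $A^p+\tilde A^p<p(A^{p-1}\tilde A+A\tilde A^{p-1})\leq pn^{p-1}[(\sum_i a_i^{p-1})\tilde A+(\sum_i\tilde a_i^{p-1})A]$; combining this with the obvious bound $A^pB^p+\tilde A^p\tilde B^p\leq(A^p+\tilde A^p)(B^p+\tilde B^p)$ forces the right-hand side of (iii) to be nonpositive, so the inequality holds trivially because the left-hand side is nonnegative. The case $F_B<0$ is symmetric. Beyond this case split, the only remaining work is routine algebraic bookkeeping.
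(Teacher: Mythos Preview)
Your argument is correct and follows the paper's line closely. For (\ref{1}) the paper does exactly what you describe: apply Lemma~\ref{l0}(i) to each factor, multiply, and extract the displayed terms; the paper then reads off (\ref{2}) from the same expanded product (via the symmetry $j\leftrightarrow m$), which amounts to your direct four-index expansion.

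For (\ref{3}) your treatment is in fact \emph{more} careful than the paper's. The paper simply writes
\[
|A-\tilde A|^{p}|B-\tilde B|^{p}\ \geq\ F_{A}\,F_{B}
\]
with $F_{A}=A^{p}+\tilde A^{p}-p(A^{p-1}\tilde A+A\tilde A^{p-1})$ and $F_{B}$ defined analogously, and then expands $F_{A}F_{B}$. As you observed, this multiplication is not justified when both $F_{A}<0$ and $F_{B}<0$ (e.g.\ $A=\tilde A=B=\tilde B=1$ gives left side $0$ and $F_{A}F_{B}=(2-2p)^{2}>0$). Your case split repairs this: when $F_{A},F_{B}\geq0$ the paper's computation goes through verbatim, and when $F_{A}<0$ (or $F_{B}<0$) you show directly that the right-hand side of (\ref{3}) is nonpositive, so the inequality is trivial. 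The remaining passage from $A^{p-1}$ to $n^{p-1}\sum_{i}a_{i}^{p-1}$ is identical in both arguments. In short, your proof is the paper's proof with the missing sign discussion filled in.
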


\begin{proof}
Using Lemma \ref{l0}(i) we obtain
\begin{align*}
&  \left\vert {\ {\textstyle\sum\limits_{i=1}^{n}} }a_{i}\right\vert
^{p}\left\vert {\ {\textstyle\sum\limits_{j=1}^{n}} }b_{j}\right\vert ^{p}%
\geq\left(  {\ {\textstyle\sum\limits_{i=1}^{n}} }a_{i}^{p}%
+(p-1){\ {\textstyle\sum\limits_{i\neq k}} }a_{i}^{p-1}a_{k}\right)  \left(
{\ {\textstyle\sum\limits_{j=1}^{n}} }b_{j}^{p}+(p-1){\ {\textstyle\sum
\limits_{j\neq m}} }b_{j}^{p-1}b_{m}\right) \\
&  \geq{{\textstyle\sum\limits_{i=1}^{n}} }a_{i}^{p}b_{i}^{p}+(p-1)
{\textstyle\sum\limits_{j\neq m}} (a_{j}^{p}+a_{m}^{p})b_{j}^{p-1}%
b_{m}+(p-1){{\textstyle\sum\limits_{i\neq k}} }\left(  b_{i}^{p}+b_{k}%
^{p}\right)  a_{i}^{p-1}a_{k}.
\end{align*}
Inequalities (\ref{1}) and (\ref{2}) can be immediately deduced from the above expression.

\noindent On the other hand, applying Lemma \ref{l0}(ii) we obtain
\begin{align*}
&  \left\vert A-\tilde{A}\right\vert ^{p}\left\vert B-\tilde{B}\right\vert
^{p}\\
&  \geq\left[  A^{p}+\tilde{A}^{p}-p\bigl(A^{p-1}\tilde{A}+A\tilde{A}%
^{p-1}\bigr)\right]  \left[  B^{p}+\tilde{B}^{p}-p\bigl( B^{p-1}\tilde
{B}+B\tilde{B}^{p-1}\bigl) \right] \\
&  \geq{A}^{p}{B}^{p}+\tilde{A}^{p}\tilde{B}^{p}-p\bigl(B^{p}+\tilde{B}%
^{p}\bigr)\bigl(A^{p-1}\tilde{A}+A\tilde{A}^{p-1}\bigr)-p\bigl(A^{p}+\tilde
{A}^{p}\bigr)\bigl(B^{p-1}\tilde{B}+B\tilde{B}^{p-1}\bigr),
\end{align*}
which yields inequality (\ref{3}).
\end{proof}

\begin{lemma}
\label{descorte}For every $u\in H^{1}(\mathbb{R}^{N})$ the following
inequalities hold:
\begin{align*}
\Vert\chi u\Vert_{V}^{2} &  \leq\Vert u\Vert_{V}^{2}-\int_{\mathbb{R}^{N}%
}(\chi\Delta\chi)u^{2},\\
\mathbb{D}(\chi u) &  \geq\mathbb{D}(u)-2\int_{\mathbb{R}^{N}}\int
_{\mathbb{R}^{N}}\frac{(1-\chi^{p}(x))|u(x)|^{p}|u(y)|^{p}}{|x-y|^{\alpha}%
}dx\,dy.
\end{align*}

\end{lemma}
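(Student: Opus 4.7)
The plan is to handle the two inequalities separately by elementary manipulations: the first is a consequence of the product rule plus integration by parts, and the second is a pointwise algebraic inequality combined with the symmetry of the double integral.

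For the first inequality, I would expand $\nabla(\chi u) = u\nabla\chi + \chi\nabla u$ and compute
\[
\int_{\mathbb{R}^{N}}|\nabla(\chi u)|^{2} = \int_{\mathbb{R}^{N}}\chi^{2}|\nabla u|^{2} + \int_{\mathbb{R}^{N}}|\nabla\chi|^{2}u^{2} + \int_{\mathbb{R}^{N}}\chi\,\nabla\chi\cdot\nabla(u^{2}).
\]
The cross term rewrites as $\tfrac{1}{2}\int \nabla(\chi^{2})\cdot\nabla(u^{2})$, so integrating by parts (using that $\chi$ is smooth with bounded derivatives and $u\in H^{1}(\mathbb{R}^{N})$, so no boundary contribution appears) yields $-\int|\nabla\chi|^{2}u^{2} - \int(\chi\Delta\chi)u^{2}$. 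The $|\nabla\chi|^{2}u^{2}$ terms cancel, leaving
\[
\Vert\chi u\Vert_{V}^{2} = \int_{\mathbb{R}^{N}}\chi^{2}|\nabla u|^{2} + \int_{\mathbb{R}^{N}}(1+V)\chi^{2}u^{2} - \int_{\mathbb{R}^{N}}(\chi\Delta\chi)u^{2}.
\]
Since $0\leq\chi\leq 1$ and $1+V>0$ on $\mathbb{R}^{N}$, the first two integrals are bounded above by $\int|\nabla u|^{2}$ and $\int(1+V)u^{2}$, respectively, so their sum is at most $\Vert u\Vert_{V}^{2}$, giving the claimed inequality.

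For the second inequality, I would just expand
\[
\mathbb{D}(u)-\mathbb{D}(\chi u) = \int_{\mathbb{R}^{N}}\int_{\mathbb{R}^{N}}\frac{\bigl(1-\chi^{p}(x)\chi^{p}(y)\bigr)|u(x)|^{p}|u(y)|^{p}}{|x-y|^{\alpha}}\,dx\,dy,
\]
and use the pointwise estimate $1-\chi^{p}(x)\chi^{p}(y) \leq (1-\chi^{p}(x)) + (1-\chi^{p}(y))$, valid because $0\leq\chi^{p}\leq 1$ (write $1-ab = (1-a)+a(1-b)\leq(1-a)+(1-b)$ for $a,b\in[0,1]$). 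Splitting the integral accordingly and invoking the $(x,y)$-symmetry of the kernel $|x-y|^{-\alpha}|u(x)|^{p}|u(y)|^{p}$ collapses the two terms into twice a single integral, yielding
\[
\mathbb{D}(u)-\mathbb{D}(\chi u) \leq 2\int_{\mathbb{R}^{N}}\int_{\mathbb{R}^{N}}\frac{(1-\chi^{p}(x))|u(x)|^{p}|u(y)|^{p}}{|x-y|^{\alpha}}\,dx\,dy,
\]
which is the desired bound.

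Neither step has a real obstacle: the only mild point is justifying the integration by parts, which is standard since $\chi\in\mathcal{C}^{\infty}$ with $\chi$ and all its derivatives bounded and $u^{2}\in W^{1,1}(\mathbb{R}^{N})$ with sufficient decay (a density argument approximating $u$ by $\mathcal{C}_{c}^{\infty}$ functions handles the general case). The algebraic inequality $1-ab\leq(1-a)+(1-b)$ for $a,b\in[0,1]$ is the only ingredient needed beyond symmetry of the HLS kernel.
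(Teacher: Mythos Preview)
Your proof is correct and follows essentially the same route as the paper: the first inequality is obtained by expanding $|\nabla(\chi u)|^{2}$, integrating the cross term by parts, and using $\chi^{2}\leq 1$ with $1+V>0$; the second comes from the same algebraic identity (the paper writes it as $ab=1-(1-a)-(1-b)+(1-a)(1-b)$ and drops the nonnegative last term, which is exactly your inequality $1-ab\leq(1-a)+(1-b)$) combined with the symmetry of the kernel.
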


\begin{proof}
For every $u\in H^{1}(\mathbb{R}^{N})$ one has that
\begin{align*}
\Vert\chi u\Vert_{V}^{2} &  =\int_{\mathbb{R}^{N}}\left(  \left\vert
\chi\nabla u+u\nabla\chi\right\vert ^{2}+(1+V(x))\left\vert \chi u\right\vert
^{2}\right) \\
&  =\int_{\mathbb{R}^{N}}\chi^{2}\left(  |\nabla u|^{2}+(1+V(x))|u|^{2}%
\right)  +\int_{\mathbb{R}^{N}}\bigl(|\nabla\chi|^{2}-\frac{1}{2}\Delta
(\chi^{2})\bigr)u^{2}\\
&  \leq\Vert u\Vert_{V}^{2}-\int_{\mathbb{R}^{N}}(\chi\Delta\chi)u^{2}.
\end{align*}
Writing $ab=1-(1-a)-(1-b)+(1-a)(1-b)$ and taking $a:=\chi^{p}(x),$
$b:=\chi^{p}(y),$ we obtain
\begin{align*}
\mathbb{D}(\chi u) &  =\int_{\mathbb{R}^{N}}\int_{\mathbb{R}^{N}}\frac
{\chi^{p}(x)\chi^{p}(y)|u(x)|^{p}|u(y)|^{p}}{|x-y|^{\alpha}}dx\,dy\\
&  =\mathbb{D}(u)-2\int_{\mathbb{R}^{N}}\int_{\mathbb{R}^{N}}\frac{(1-\chi
^{p}(x))|u(x)|^{p}|u(y)|^{p}}{|x-y|^{\alpha}}dx\,dy\\
&  +\int_{\mathbb{R}^{N}}\int_{\mathbb{R}^{N}}\frac{(1-\chi^{p}(x))(1-\chi
^{p}(y))|u(x)|^{p}|u(y)|^{p}}{|x-y|^{\alpha}}dx\,dy.
\end{align*}
Notice that the last summand in the right-hand side of the above expression is
nonnegative. Then the second inequality follows.
\end{proof}

We shall apply this lemma to the function $\sigma_{Rz}$ to derive inequality
(\ref{lhs}). To this purpose we also require some asymptotic estimates, which
will be provided by the following four lemmas.

Since $\omega$ is a solution of problem (\ref{problim}), for any $z,z^{\prime
}\in\mathbb{R}^{N}$, one has that $J_{\infty}^{\prime}(\omega_{z}%
)\omega_{z^{\prime}}=0$, which is equivalent to
\[
\int_{\mathbb{R}^{N}}{\left[  \nabla\omega_{z}\cdot\nabla\omega_{z^{\prime}%
}+\omega_{z}\omega_{z^{\prime}}\right]  }=\int_{\mathbb{R}^{N}}\left(
\frac{1}{|x|^{\alpha}}\ast\omega_{z}^{p}\right)  \omega_{z}^{p-1}%
\omega_{z^{\prime}}.
\]
A change of variable in the right-hand side of this inequality allows us to
express it as
\begin{equation}
\left\langle \omega_{z},\omega_{z^{\prime}}\right\rangle =I(z^{\prime
}-z)\text{\qquad for all \ }z,z^{\prime}\in\mathbb{R}^{N},\label{wI}%
\end{equation}
where $\left\langle \cdot,\cdot\right\rangle $ is the usual scalar product in
$H^{1}(\mathbb{R}^{N})$ and $I$ is the function defined in (\ref{main}). We
denote by $Fz:=\{\left(  gz,hz\right)  \in\Gamma z\times\Gamma z:gz\neq hz\}$
and define
\begin{align*}
\varepsilon_{Rz} &  :={\textstyle\sum\limits_{\substack{\left(  gz,hz\right)
\in Fz \\\phi(g)=\phi(h)}}}I(Rgz-Rhz),\text{ }\\
\widehat{\varepsilon}_{Rz} &  :={\textstyle\sum\limits_{_{\substack{\left(
gz,hz\right)  \in Fz \\\phi(g)\neq\phi(h)}}}}I(Rgz-Rhz)\text{ \ if }%
\phi\not \equiv 1,\text{\quad and\quad}\widehat{\varepsilon}_{Rz}:=0\text{
\ if }\phi\equiv1.
\end{align*}
We choose $g_{z},h_{z}\in Gz$ such that $\left\vert g_{z}z-h_{z}z\right\vert
=\mu(\Gamma z):=\min\{\left\vert gz-hz\right\vert :g,h\in\Gamma,$ $gz\neq
hz\}$ and set
\[
\xi_{z}:=g_{z}z-h_{z}z.
\]

\begin{lemma}
\label{epsilongorro}If $(Z_{\ast})$ holds, then
\[
\widehat{\varepsilon}_{Rz}=o(\varepsilon_{Rz})
\]
uniformly in $z\in Z.$
\end{lemma}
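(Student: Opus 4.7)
The case $\phi\equiv 1$ is trivial since then $\widehat{\varepsilon}_{Rz}=0$ by definition, so I focus on the case where $\phi$ is an epimorphism. The guiding idea is that $(Z_{\ast})$ forces the cross terms in $\widehat{\varepsilon}_{Rz}$ to decay at a strictly larger exponential rate than the terms controlling $\varepsilon_{Rz}$, and the asymptotics of $I$ provided by Lemmas \ref{Ia<1} and \ref{inf2} make this gap quantitative.

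First I would record a geometric observation: if $\phi(g)\neq\phi(h)$, setting $g':=g^{-1}h\in\Gamma\smallsetminus G$ and using that $\Gamma$ acts by isometries,
\[
|gz-hz|=|z-g'z|\geq\text{dist}(g'z,Gz)\geq a_{0}\mu(Gz),
\]
since $g'z$ lies in a $G$-coset distinct from $Gz$ and $(Z_{\ast})$ applies to every element of $\Gamma\smallsetminus G$. In particular, no cross-pair realizes the infimum $\mu(\Gamma z)$, so $\mu(\Gamma z)=\mu(Gz)$; the distinguished pair $(g_{z}z,h_{z}z)$ therefore sits in a single $\phi$-class and contributes to $\varepsilon_{Rz}$, with $|\xi_{z}|=\mu(Gz)$.

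Next I would quantify both sums. By Lemma \ref{inf2}, for any $a_{2}>1$ there is $k_{a_{2}}>0$ with
\[
\varepsilon_{Rz}\geq I(R\xi_{z})\geq k_{a_{2}}(R\mu(Gz))^{-(N-1)/2}e^{-a_{2}R\mu(Gz)}
\]
as soon as $R\mu_{G}\geq 1$. By Lemma \ref{Ia<1}, for any $a_{1}\in(0,1)$ and any $\eta>0$, for all $R$ sufficiently large (uniformly in $z$, using $|gz-hz|\geq a_{0}\mu_{G}$),
\[
I(R(gz-hz))\leq\eta\,(Ra_{0}\mu(Gz))^{-(N-1)/2}e^{-a_{1}a_{0}R\mu(Gz)}
\]
for every cross-pair. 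Since the number of cross-pairs is bounded by $\ell(\Gamma)^{2}$, summing and dividing gives
\[
\frac{\widehat{\varepsilon}_{Rz}}{\varepsilon_{Rz}}\leq C\eta\,e^{-(a_{1}a_{0}-a_{2})R\mu(Gz)}
\]
for a constant $C$ depending only on $\ell(\Gamma)$, $a_{0}$, $k_{a_{2}}$.

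To close, since $a_{0}>1$ I would pick $a_{1}<1$ and $a_{2}>1$ so close to $1$ that $a_{1}a_{0}>a_{2}$, and then exploit the uniform lower bound $\mu(Gz)\geq\mu_{G}>0$ for $z\in Z\subset\Sigma$ (valid because $\ell(\Gamma)<\infty$) to turn the last exponential into a strictly negative multiple of $R$ independent of $z$; this yields $\widehat{\varepsilon}_{Rz}/\varepsilon_{Rz}\to 0$ uniformly in $z\in Z$ as $R\to\infty$. The main obstacle is precisely this uniformity in $z$, and it is resolved by restricting to $Z\subset\Sigma$ and by the strict gap $a_{0}>1$ built into $(Z_{\ast})$.
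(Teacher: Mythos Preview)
Your proof is correct and follows essentially the same approach as the paper's: both arguments bound $\varepsilon_{Rz}$ from below via Lemma~\ref{inf2} applied to the distinguished term $I(R\xi_{z})$, bound the cross-terms in $\widehat{\varepsilon}_{Rz}$ from above via Lemma~\ref{Ia<1}, and exploit the strict gap $a_{0}>1$ from $(Z_{\ast})$ to make the ratio vanish uniformly in $z$. The only cosmetic difference is that the paper packages the two estimates into a single chain of inequalities with one parameter $\widehat{a}\in(0,1)$ satisfying $\widehat{a}a_{0}>1$, whereas you introduce two parameters $a_{1}<1<a_{2}$ with $a_{1}a_{0}>a_{2}$ and estimate numerator and denominator separately; the content is identical.
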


\begin{proof}
For $a_{0}>1$ as in condition $(Z_{\ast})$\ we fix $\widehat{a}\in(0,1)$ such
that $a:=\widehat{a}a_{0}>1.$ Thus, $a\left\vert \xi_{z}\right\vert
=a\mu(Gz)\leq\widehat{a}\left\vert gz-hz\right\vert $ for any $z\in Z,$
$g,h\in\Gamma$ with $gz\neq hz$ and $\phi(g)\neq\phi(h).$ Lemma \ref{inf2}
yields a constant $k_{a}>0$ such that
\[
I(R\xi_{z})|R\xi_{z}|^{b}e^{a|R\xi_{z}|}\geq k_{a}\qquad\text{if \ }R\geq
\mu_{\Gamma}(Z)^{-1},
\]
where $b:=\frac{N-1}{2}.$ So, setting $C:=k_{a}^{-1}$ we obtain
\begin{align*}
\frac{I(Rgz-Rhz)}{I(R\xi_{z})} &  \leq\frac{I(Rgz-Rhz)\left\vert
Rgz-Rhz\right\vert ^{b}e^{\widehat{a}\left\vert Rgz-Rhz\right\vert }}%
{I(R\xi_{z})|R\xi_{z}|^{b}e^{a|R\xi_{z}|}}\\
&  \leq CI(Rgz-Rhz)\left\vert Rgz-Rhz\right\vert ^{b}e^{\widehat{a}\left\vert
Rgz-Rhz\right\vert }\qquad\text{if \ }R\geq\mu_{\Gamma}(Z)^{-1}.
\end{align*}
Let $\varepsilon>0.$ Lemma \ref{Ia<1} asserts that there exists $S>0$ such
that $I(\zeta)\left\vert \zeta\right\vert ^{b}e^{\widehat{a}\left\vert
\zeta\right\vert }<\varepsilon$ if $\left\vert \zeta\right\vert >S.$ As
$\widehat{a}\left\vert Rgz-Rhz\right\vert \geq Ra\mu_{G}>0,$ taking
$R_{0}:=\max\{\frac{\widehat{a}S}{a\mu_{G}},\mu_{\Gamma}(Z)^{-1}\}$ we
conclude that
\[
0\leq\frac{\widehat{\varepsilon}_{Rz}}{\varepsilon_{Rz}}\leq{\textstyle\sum
\limits_{\substack{gz\neq hz\in\Gamma z \\\phi(g)\neq\phi(h)}}}\frac
{I(Rgz-Rhz)}{I(R\xi_{z})}\leq\ell(G)^{2}C\varepsilon\text{\qquad if }R\geq
R_{0},
\]
which proves the assertion.
\end{proof}

\begin{lemma}
\label{interD}If $(Z_{\ast})$ holds then, for any $g,h\in\Gamma$ such that
$\phi(g)\neq\phi(h)$ and $\gamma\in\Gamma\smallsetminus G,$ we have that%
\[
\int_{\mathbb{R}^{N}}\biggl(\frac{1}{|x|^{\alpha}}\ast\Bigl(|{\textstyle\sum
\limits_{\zeta\in Gz}}\omega_{R\zeta}|^{p}+|{\textstyle\sum\limits_{\zeta\in
Gz}}\omega_{R\gamma\zeta}|^{p}\Bigr)\biggr)\omega_{Rgz}^{p-1}\omega
_{Rhz}=o(\varepsilon_{Rz})
\]
uniformly in $z\in Z.$
\end{lemma}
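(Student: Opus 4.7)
The plan is to follow the template of Lemma~\ref{epsilongorro}: bound the convolution pointwise by a constant independent of $R$ and $z$, reduce the remaining integral to one of the form $\int_{\mathbb{R}^N}\omega^{p-1}\omega_\zeta$ controlled by Lemma~\ref{a<1}, and finally divide by the lower bound for $\varepsilon_{Rz}$ furnished by Lemma~\ref{inf2}.

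First I would bound the convolution in $L^\infty$ uniformly in $R$ and $z$. The convexity inequality $|\sum_{\zeta\in Gz}\omega_{R\zeta}|^p\leq\ell(G)^{p-1}\sum_{\zeta\in Gz}\omega_{R\zeta}^p$ reduces matters to estimating $\frac{1}{|x|^\alpha}\ast\omega_{R\zeta}^p$; since each such term is a translate of $\frac{1}{|x|^\alpha}\ast\omega^p\in L^\infty(\mathbb{R}^N)$ (the $L^\infty$ membership is the key fact used in Lemma~\ref{Ia<1}), one obtains a constant $C>0$ depending only on $\ell(G)$ and $\omega$ bounding the whole convolution pointwise. Hence the integral in question is at most $C\int_{\mathbb{R}^N}\omega_{Rgz}^{p-1}\omega_{Rhz}$, and after the translation $y:=x-Rgz$ this equals $C\int_{\mathbb{R}^N}\omega^{p-1}\omega_{R(hz-gz)}$.

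Next I would reuse the exponential decay of $\omega$ exactly as in the proof of Lemma~\ref{a<1}: for any $\nu\in(0,1)$ there exists $C_\nu>0$ with $\int_{\mathbb{R}^N}\omega^{p-1}\omega_\zeta\leq C_\nu e^{-\nu|\zeta|}$ for every $\zeta\in\mathbb{R}^N$. The hypothesis $\phi(g)\neq\phi(h)$ means $g$ and $h$ lie in different cosets of $G$, so condition $(Z_\ast)$ gives $|R(hz-gz)|\geq a_0R\,\mu(Gz)$, whence
\[
\int_{\mathbb{R}^N}\omega^{p-1}\omega_{R(hz-gz)}\leq C_\nu e^{-\nu a_0 R\mu(Gz)}.
\]

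To conclude, I would mimic the end of the proof of Lemma~\ref{epsilongorro}: pick $\hat a\in(0,1)$ close enough to $1$ so that $a:=\hat a a_0>1$, and then $\nu\in(\hat a,1)$. Lemma~\ref{inf2} yields $\varepsilon_{Rz}\geq I(R\xi_z)\geq k_a|R\xi_z|^{-(N-1)/2}e^{-aR|\xi_z|}$ with $|\xi_z|\leq\mu(Gz)$, so the quotient of the integral by $\varepsilon_{Rz}$ is bounded by a constant times $R^{(N-1)/2}e^{-(\nu a_0-a)R\mu(Gz)}$, which tends to $0$ uniformly in $z\in Z$ as $R\to\infty$ thanks to $\mu(Gz)\geq\mu_G>0$. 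The main subtlety I expect is ensuring that every estimate along the way is uniform in $z\in Z$; this is handled, just as in the previous lemma, by the positive lower bound $\mu_G$ and by the finiteness of the number of pairs $(gz,hz)$ to consider.
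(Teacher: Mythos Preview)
Your proposal is correct and follows essentially the same route as the paper's proof: bound the convolution in $L^\infty$ uniformly (via the convexity inequality and $\frac{1}{|x|^\alpha}\ast\omega^p\in L^\infty$), reduce to $C\int_{\mathbb{R}^N}\omega^{p-1}\omega_{R(hz-gz)}$, and then compare with $\varepsilon_{Rz}$ by the same mechanism as in Lemma~\ref{epsilongorro}, invoking Lemma~\ref{a<1} in place of Lemma~\ref{Ia<1}. The only cosmetic difference is that you extract the intermediate exponential bound $\int\omega^{p-1}\omega_\zeta\leq C_\nu e^{-\nu|\zeta|}$ from inside the proof of Lemma~\ref{a<1} and combine it directly with the lower bound from Lemma~\ref{inf2}, whereas the paper phrases the final step as a straight repetition of the quotient argument in Lemma~\ref{epsilongorro}; the underlying arithmetic with $a_0>1$ is identical.
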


\begin{proof}
Since $\frac{1}{|x|^{\alpha}}\ast\omega^{p}\in L^{\infty}(\mathbb{R}^{N}) $,
we have that $\frac{1}{|x|^{\alpha}}\ast\Bigl(|{\textstyle\sum\limits_{\zeta
\in Gz}}\omega_{R\zeta}|^{p}+|{\textstyle\sum\limits_{\zeta\in Gz}}%
\omega_{R\gamma\zeta}|^{p}\Bigr)$ is bounded on $\mathbb{R}^{N}$ uniformly in
$z$. Hence,
\begin{align*}
0 &  \leq\int_{\mathbb{R}^{N}}\biggl(\frac{1}{|x|^{\alpha}}\ast
\Bigl(|{\textstyle\sum\limits_{\zeta\in Gz}}\omega_{R\zeta}|^{p}%
+|{\textstyle\sum\limits_{\zeta\in Gz}}\omega_{R\gamma\zeta}|^{p}%
\Bigr)\biggr)\omega_{Rgz}^{p-1}\omega_{Rhz}\\
&  \leq C\int_{\mathbb{R}^{N}}\omega_{Rgz}^{p-1}\omega_{Rhz}=C\int
_{\mathbb{R}^{N}}\omega^{p-1}\omega_{R(hz-gz)}.
\end{align*}
Arguing as in Lemma \ref{epsilongorro}, using this time Lemma \ref{a<1}, we
obtain the conclusion.
\end{proof}

\begin{lemma}
\label{op1}If $Z$ and $V$ satisfy $(Z_{\ast})$ and $(V_{4})$, then
\[
\int_{\mathbb{R}^{N}}V^{+}\sigma_{Rz}^{2}=o(\varepsilon_{Rz})
\]
uniformly in $z\in Z.$
\end{lemma}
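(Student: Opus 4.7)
The plan is to squeeze $\int V^{+}\sigma_{Rz}^{2}$ between an upper bound coming from Lemma~\ref{apositiva} and a lower bound on $\varepsilon_{Rz}$ coming from Lemma~\ref{inf2}, exploiting that $(V_{4})$ gives $\kappa>\mu^{\Gamma}(Z)$ while $(Z_{\ast})$ gives $\mu^{\Gamma}(Z)<2$ (recall $V_{\infty}=1$).

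First I would apply the elementary Cauchy--Schwarz inequality $(x_{1}+\cdots+x_{n})^{2}\leq n(x_{1}^{2}+\cdots+x_{n}^{2})$ to the signed sum of $\ell(\Gamma)$ translates in (\ref{sigma}) to get $\sigma_{Rz}^{2}\leq\ell(\Gamma)\sum_{gz\in\Gamma z}\omega_{Rgz}^{2}$, and integrate against $V^{+}$:
\[
\int_{\mathbb{R}^{N}}V^{+}\sigma_{Rz}^{2}\leq\ell(\Gamma)\sum_{gz\in\Gamma z}A(Rgz).
\]
Since $V^{+}(x)\leq c_{0}e^{-\kappa|x|}$, I would pick $M\in(\mu^{\Gamma}(Z),\min\{\kappa,2\})$, which is nonempty by $(Z_{\ast})$ and $(V_{4})$, and apply Lemma~\ref{apositiva} with exponent $M$: this yields $A(Rgz)\,R^{(N-1)/2}e^{MR}\to 0$ as $R\to\infty$, and because $|Rgz|=R$ the convergence is automatically uniform in $gz\in\Gamma z$ and $z\in Z$.

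Next I would bound $\varepsilon_{Rz}$ from below by a single term. The pair $(g_{z}z,h_{z}z)$ that realises $|\xi_{z}|=\mu(\Gamma z)$ lives in $Gz$, so $\phi(g_{z})=\phi(h_{z})=1$, and the term $I(R\xi_{z})$ really does contribute to $\varepsilon_{Rz}$. Fixing any $a\in(1,M/\mu^{\Gamma}(Z))$ (possible because $M>\mu^{\Gamma}(Z)$), Lemma~\ref{inf2} gives a constant $k_{a}>0$ with
\[
\varepsilon_{Rz}\geq I(R\xi_{z})\geq k_{a}\,|R\xi_{z}|^{-(N-1)/2}e^{-aR|\xi_{z}|}\geq k_{a}'\,R^{-(N-1)/2}e^{-aR\mu^{\Gamma}(Z)}
\]
for $R$ large, uniformly in $z\in Z$, where the last step uses $|\xi_{z}|=\mu(\Gamma z)\leq\mu^{\Gamma}(Z)$ (the factor $|\xi_{z}|^{-(N-1)/2}$ is bounded below by $\mu^{\Gamma}(Z)^{-(N-1)/2}$, while $e^{-aR|\xi_{z}|}\geq e^{-aR\mu^{\Gamma}(Z)}$).

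Combining the two estimates gives
\[
\frac{\int_{\mathbb{R}^{N}}V^{+}\sigma_{Rz}^{2}}{\varepsilon_{Rz}}\leq C\,o(1)\,e^{-(M-a\mu^{\Gamma}(Z))R},
\]
and the exponent is strictly negative by the choice of $a$, so the ratio vanishes. The only delicate point in the execution is keeping the estimates uniform in $z\in Z$; this works because the upper bound depends on $z$ solely through $|Rgz|=R$, and the lower bound depends on $z$ solely through $|\xi_{z}|$, which admits the uniform bound $\mu^{\Gamma}(Z)<2$. Everything else is direct bookkeeping.
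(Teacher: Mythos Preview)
Your proof is correct and follows essentially the same route as the paper: bound $\int V^{+}\sigma_{Rz}^{2}$ above by a constant times $\sum_{gz\in\Gamma z}A(Rgz)$, bound $\varepsilon_{Rz}$ below by the single term $I(R\xi_{z})$, and then compare exponents via Lemmas~\ref{apositiva} and~\ref{inf2} with parameters $M$ and $a$ chosen so that $a\mu^{\Gamma}(Z)\leq M<\min\{2,\kappa\}$ and $a>1$. The paper packages the two choices as $M:=a\mu^{\Gamma}(Z)$ while you pick $M$ first and then $a\in(1,M/\mu^{\Gamma}(Z))$, but these are the same constraint, and your explicit remarks on uniformity in $z$ (through $|Rgz|=R$ and $|\xi_{z}|\leq\mu^{\Gamma}(Z)$) match the paper's implicit use of the same facts.
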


\begin{proof}
Let $\kappa>\mu^{\Gamma}(Z)$ be as in assumption $(V_{4})$ (recall that
$V_{\infty}=1$ is assumed). We fix $a>1$ such that $M:=a\mu^{\Gamma}%
(Z)<\min\{2,\kappa\}$. Lemma \ref{inf2} implies that there exists a positive
constant $k_{a}$ such that
\[
I(R\xi_{z})|R\xi_{z}|^{b}e^{a|R\xi_{z}|}\geq k_{a}\qquad\text{if \ }R\geq
\mu_{\Gamma}(Z)^{-1},
\]
where $b:=\frac{N-1}{2}.$ Observing that $M|Rz|=MR=aR\mu^{\Gamma}(Z)\geq
a|R\xi_{z}|$ for all $z\in Z,$ we conclude that
\begin{align*}
\frac{\int_{\mathbb{R}^{N}}V^{+}\sigma_{Rz}^{2}}{\varepsilon_{Rz}} &  \leq
C{\textstyle\sum\limits_{gz\in\Gamma z}}\frac{A(Rgz)}{I(R\xi_{z})}\leq
C{\textstyle\sum\limits_{gz\in\Gamma z}}\frac{A(Rgz)|Rgz|^{b}e^{M|Rgz|}%
}{I(R\xi_{z})|R\xi_{z}|^{b}e^{a|R\xi_{z}|}}\\
&  \leq C{\textstyle\sum\limits_{gz\in\Gamma z}}A(Rgz)|Rgz|^{b}e^{M|Rgz|}%
\qquad\text{if \ }R\geq\mu_{\Gamma}(Z)^{-1},
\end{align*}
where $C$ denotes different positive constants and $A$ is the map defined in
(\ref{A}). Taking Lemma \ref{apositiva} into account, we obtain that
\[
\lim_{R\rightarrow\infty}\frac{\int_{\mathbb{R}^{N}}V^{+}\sigma_{Rz}^{2}%
}{\varepsilon_{Rz}}=0
\]
uniformly in $z\in Z,$ as claimed.
\end{proof}

\begin{lemma}
\label{op2}If $f\in\mathcal{C}_{c}^{0}\left(  \mathbb{R}^{N}\right)  $ and
$q>\max\{\mu^{\Gamma}(Z),1\}$, then
\[
\int_{\mathbb{R}^{N}}f\sigma_{Rz}^{q}=o(\varepsilon_{Rz})
\]
uniformly in $z\in Z.$
\end{lemma}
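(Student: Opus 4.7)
The plan is to combine a pointwise upper bound on $|\sigma_{Rz}|^q$ with Lemma \ref{h} to control the numerator, together with Lemma \ref{inf2} applied to the diagonal pair $\xi_z$ to bound $\varepsilon_{Rz}$ from below. The hypothesis $q>\max\{\mu^{\Gamma}(Z),1\}$ is exactly what is needed to balance the two resulting exponential rates.

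First I would use the power mean inequality (valid since $q\geq 1$) to write
\[
|\sigma_{Rz}|^{q}\leq \ell(\Gamma)^{q-1}\sum_{gz\in\Gamma z}\omega_{Rgz}^{q},
\]
reducing $\int f\,\sigma_{Rz}^{q}$ to a sum of at most $\ell(\Gamma)$ terms of the form $\int f\,\omega^{q}(\,\cdot-Rgz)$, each with $|Rgz|=R$ since $gz\in\Sigma$. Applying Lemma \ref{h} with any $a'\in(0,1)$ then yields, uniformly in $z\in Z$,
\[
\int_{\mathbb{R}^{N}}f\,\sigma_{Rz}^{q}=o\!\left(R^{-(N-1)/2}e^{-qa'R}\right)\quad\text{as }R\to\infty,
\]
the uniformity coming from the fact that the bound in the proof of Lemma \ref{h} depends on $\zeta$ only through $|\zeta|$.

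For the lower bound on $\varepsilon_{Rz}$ I would keep only the single term $I(R\xi_{z})$; this is permissible because $\xi_{z}=g_{z}z-h_{z}z$ with $g_{z},h_{z}\in G$, so $\phi(g_{z})=\phi(h_{z})=1$ and the pair appears in the sum defining $\varepsilon_{Rz}$. Lemma \ref{inf2} applied with any $a>1$, combined with $|\xi_{z}|=\mu(\Gamma z)\leq\mu^{\Gamma}(Z)$, gives
\[
\varepsilon_{Rz}\geq I(R\xi_{z})\geq k_{a}\bigl(R\mu^{\Gamma}(Z)\bigr)^{-(N-1)/2}e^{-aR\mu^{\Gamma}(Z)}
\]
for all $R\geq\mu_{\Gamma}(Z)^{-1}$ and every $z\in Z$. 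Dividing the two estimates, the ratio $\int f\,\sigma_{Rz}^{q}/\varepsilon_{Rz}$ is $o\bigl(e^{-(qa'-a\mu^{\Gamma}(Z))R}\bigr)$.

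It remains to pick the exponents: since $q>\mu^{\Gamma}(Z)$ the interval $(\mu^{\Gamma}(Z)/q,1)$ is nonempty, so I may fix $a'$ there and then choose $a>1$ close enough to $1$ that $a\mu^{\Gamma}(Z)<qa'$; this makes the exponent strictly negative and forces the ratio to $0$, uniformly in $z$. I expect the main obstacle to be not any single estimate but the bookkeeping needed to guarantee uniformity in $z\in Z$, which works only because the radial symmetry of $\omega$ makes both $I(\zeta)$ and the bound in Lemma \ref{h} depend on $\zeta$ only through its magnitude, together with the implicit positivity of $\mu_{\Gamma}(Z)$ needed to ensure $|R\xi_{z}|\geq 1$ holds uniformly once $R$ is large.
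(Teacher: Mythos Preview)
Your proposal is correct and follows essentially the same approach as the paper: both bound the numerator via Lemma~\ref{h} after reducing $|\sigma_{Rz}|^{q}$ to a sum of translated $\omega^{q}$'s, bound the denominator below by the single term $I(R\xi_{z})$ via Lemma~\ref{inf2}, and balance the exponents using $q>\mu^{\Gamma}(Z)$. The only organizational difference is that the paper makes a single choice of $a>1$ and sets $\widehat{a}:=a\mu^{\Gamma}(Z)/q$ (so that the exponential factors cancel exactly in the ratio, with the $o(1)$ coming straight from Lemma~\ref{h}), whereas you choose $a'$ and $a$ separately and absorb the small exponential gap; the content is the same.
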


\begin{proof}
Let us fix $a>1$ such that $\widehat{a}:=\frac{a\mu^{\Gamma}(Z)}{q}<1.$ Lemma
\ref{inf2} yields that there exists $k_{a}>0$ such that
\[
I(R\xi_{z})|R\xi_{z}|^{b}e^{a|R\xi_{z}|}\geq k_{a}\qquad\text{if \ }R\geq
\mu_{\Gamma}(Z)^{-1},
\]
where $b:=\frac{N-1}{2}.$ Since $q\widehat{a}|Rz|=q\widehat{a}R=aR\mu^{\Gamma
}(Z)\geq a|R\xi_{z}|$ for all $z\in Z,$ we conclude that
\begin{align*}
\frac{\int_{\mathbb{R}^{N}}\left\vert f\right\vert \sigma_{Rz}^{q}%
}{\varepsilon_{Rz}} &  \leq C\sum\limits_{gz\in\Gamma z}\frac{\int
_{\mathbb{R}^{N}}\left\vert f\right\vert \omega_{Rgz}^{q}}{I(R\xi_{z})}\leq
C\sum\limits_{gz\in\Gamma z}\frac{\int_{\mathbb{R}^{N}}\left\vert f\right\vert
\omega_{Rgz}^{q}|Rgz|^{b}e^{q\widehat{a}|Rgz|}}{I(R\xi_{z})|R\xi_{z}%
|^{b}e^{a|R\xi_{z}|}}\\
&  \leq C{\textstyle\sum\limits_{gz\in\Gamma z}}\int_{\mathbb{R}^{N}%
}\left\vert f\right\vert \omega_{Rgz}^{q}|Rgz|^{b}e^{q\widehat{a}|Rgz|}%
\qquad\text{if \ }R\geq\mu_{\Gamma}(Z)^{-1},
\end{align*}
where $C$ denote distinct positive constants. Hence, from Lemma \ref{h} we
get
\[
\lim_{R\rightarrow\infty}\frac{\int_{\mathbb{R}^{N}}f\sigma_{Rz}^{q}%
}{\varepsilon_{Rz}}=0
\]
uniformly in $z\in Z.$
\end{proof}

Finally, we need the following result.

\begin{lemma}
\label{razon}Let $\psi:(0,\infty)\rightarrow\mathbb{R}$ be the function given
by
\[
\psi(t):=\frac{a+t+o(t)}{\left(  a+bt+o(t)\right)  ^{\beta}},
\]
where $a>0$, $\beta\in(0,1)$ and $b\beta>1$. Then, there exist constants
$c_{0},t_{0}>0$ such that
\[
\psi(t)\leq a^{1-\beta}-c_{0}t\quad\text{for all }t\in(0,t_{0}).
\]

\end{lemma}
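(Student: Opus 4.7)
The plan is to do a first-order Taylor expansion of $\psi$ around $t=0$ and read off the sign of the linear term. Note that $\psi(0)=a/a^{\beta}=a^{1-\beta}$, so what we really need is that the derivative (or, more precisely, the first-order coefficient in the $o(t)$-expansion) of $\psi$ at $0$ is strictly negative; the hypothesis $b\beta>1$ is exactly what guarantees this.

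Concretely, I would first factor $a$ out of numerator and denominator, writing
\[
\psi(t)=a^{1-\beta}\,\frac{1+\tfrac{t}{a}+o(t)}{\bigl(1+\tfrac{bt}{a}+o(t)\bigr)^{\beta}}.
\]
Then I would apply the elementary expansion $(1+s)^{-\beta}=1-\beta s+o(s)$ as $s\to 0^{+}$, with $s=\tfrac{bt}{a}+o(t)$, to obtain
\[
\bigl(1+\tfrac{bt}{a}+o(t)\bigr)^{-\beta}=1-\tfrac{b\beta}{a}t+o(t).
\]
Multiplying by the numerator and collecting powers of $t$ yields
\[
\psi(t)=a^{1-\beta}\Bigl(1+\tfrac{1-b\beta}{a}t+o(t)\Bigr)=a^{1-\beta}+a^{-\beta}(1-b\beta)\,t+o(t).
\]

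Since $b\beta>1$, the coefficient $a^{-\beta}(1-b\beta)$ of $t$ is a strictly negative constant; call it $-2c_{0}$ with $c_{0}>0$. By the definition of the $o(t)$ remainder there exists $t_{0}>0$ such that $|o(t)|\leq c_{0}t$ for all $t\in(0,t_{0})$, and therefore
\[
\psi(t)\leq a^{1-\beta}-2c_{0}t+c_{0}t=a^{1-\beta}-c_{0}t\qquad\text{for all }t\in(0,t_{0}),
\]
which is the desired inequality. The only step requiring any care is justifying the binomial expansion with a genuine $o(t)$ remainder rather than $O(t)$; this is immediate since $\beta\in(0,1)$ makes $s\mapsto(1+s)^{-\beta}$ smooth near $s=0$, and the composition of $o(t)$ with a smooth function vanishing at $0$ to first order preserves the $o(t)$ estimate. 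No real obstacle arises.
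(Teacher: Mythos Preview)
Your proof is correct. The Taylor expansion you carry out is valid, and the justification you give for composing the $o(t)$ terms with the smooth map $s\mapsto(1+s)^{-\beta}$ is adequate.

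The paper proceeds somewhat differently. Instead of expanding $\psi$ directly, it first absorbs the $o(t)$ terms into slightly worsened linear coefficients: choosing $\tfrac{1}{\beta}<q<b$ and $1<s<r<\beta q$, one has $\psi(t)\le\dfrac{a+st}{(a+qt)^{\beta}}$ for small $t$. This is then rewritten as $\dfrac{a+rt}{(a+qt)^{\beta}}-\dfrac{(r-s)t}{(a+qt)^{\beta}}$; the first summand has negative derivative at $0$ (since $r<\beta q$) and hence stays below $a^{1-\beta}$, while the second contributes the explicit linear decrement. Your route is more direct and arguably cleaner, since it identifies the exact first-order coefficient $a^{-\beta}(1-b\beta)$ in one step; the paper's route trades this for working only with explicit rational functions and avoiding any discussion of how $o(t)$ behaves under composition. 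Either argument establishes the lemma.
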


\begin{proof}
Taking $\frac{1}{\beta}<q<b$ and $1<s<r<\beta q,$ we have that there exists
$t_{1}\in(0,1)$ such that
\[
\psi(t)\leq\frac{a+st}{(a+qt)^{\beta}}=\frac{a+rt}{(a+qt)^{\beta}}%
-\frac{(r-s)t}{(a+qt)^{\beta}}\quad\text{for all }t\in(0,t_{1}).
\]
We denote by $f(t):=\frac{a+rt}{(a+qt)^{\beta}}.$ Since $f^{\prime}(0)=\left(
r-\beta q\right)  a^{-\beta}<0,$ there exists $t_{0}\in(0,t_{1}) $ such that
\[
f(t)\leq f(0)=a^{1-\beta}\quad\text{for all }t\in(0,t_{0}).
\]
Consequently,%
\[
\psi(t)\leq a^{1-\beta}-\frac{(r-s)}{(a+q)^{\beta}}t\quad\text{for all }%
t\in(0,t_{0}),
\]
which concludes the proof.
\end{proof}

\smallskip

\begin{proof}
[Proof of Proposition \ref{sub}]Let $\gamma\in\Gamma\smallsetminus G.$ If
$Gz=\{z_{1},\ldots,z_{\ell}\}$ with $\ell:=\ell(G),$ we write
\[
\sigma_{Rz}=\sigma_{Rz}^{1}-\sigma_{Rz}^{2}\text{\qquad with\quad}\sigma
_{Rz}^{1}:={\textstyle\sum\limits_{i=1}^{\ell}}\omega_{Rz_{i}}\text{ \ and
\ }\sigma_{Rz}^{2}:={\textstyle\sum\limits_{i=1}^{\ell}}\omega_{R\gamma z_{i}%
}.
\]
Applying Lemma \ref{l1}\ to $a_{i}:=\omega_{Rz_{i}}(x),$ $\hat{a}_{i}%
:=\omega_{R\gamma z_{i}}(x),$ $b_{i}:=\omega_{Rz_{i}}(y),$ $\hat{b}%
_{i}:=\omega_{R\gamma z_{i}}(y)$ and using Lemma \ref{interD}\thinspace\ we
conclude that
\begin{align*}
\mathbb{D}(\sigma_{Rz}) &  \geq\mathbb{D}(\sigma_{Rz}^{1})+\mathbb{D}%
(\sigma_{Rz}^{2})+o(\varepsilon_{Rz})\\
&  \geq\left\{
\begin{array}
[c]{ll}%
\ell(\Gamma)\mathbb{D}(\omega)+2(p-1)\varepsilon_{Rz}+o(\varepsilon_{Rz}) &
\text{if\ }p>2,\\
\ell(\Gamma)\mathbb{D}(\omega)+4\varepsilon_{Rz}+o(\varepsilon_{Rz}) &
\text{if\ }p=2.
\end{array}
\right.
\end{align*}
Note that, since $\frac{1}{|x|^{\alpha}}\ast\omega^{p}\in L^{\infty
}(\mathbb{R}^{N})$, $\frac{1}{|x|^{\alpha}}\ast|\sigma_{Rz}|^{p}$ is bounded
uniformly in $z$. So, since $\mu^{\Gamma}(Z)<2\leq p$, $\ \chi\Delta\chi
\in\mathcal{C}_{c}^{0}\left(  \mathbb{R}^{N}\right)  $ and $1-\chi^{p}%
\in\mathcal{C}_{c}^{0}\left(  \mathbb{R}^{N}\right)  $, Lemma \ref{op2} yields
that
\[
\int_{\mathbb{R}^{N}}(\chi\Delta\chi)\sigma_{Rz}^{2}=o(\varepsilon
_{Rz})\text{\qquad and\qquad}\int_{\mathbb{R}^{N}}(1-\chi^{p})\left(  \frac
{1}{|x|^{\alpha}}\ast|\sigma_{Rz}|^{p}\right)  \sigma_{Rz}^{p}=o(\varepsilon
_{Rz})
\]
uniformly in $z.$ This, together with Lemmas \ref{descorte},
\ref{epsilongorro} and \ref{op1} and expression (\ref{wI}), yields
\begin{align*}
\Vert\chi\sigma_{Rz}\Vert_{V}^{2} &  \leq\Vert\sigma_{Rz}\Vert^{2}%
+\int_{\mathbb{R}^{N}}V\sigma_{Rz}^{2}-\int_{\mathbb{R}^{N}}(\chi\Delta
\chi)\sigma_{Rz}^{2}\\
&  \leq\ell(\Gamma)\left\Vert \omega\right\Vert ^{2}+\varepsilon_{Rz}%
-\widehat{\varepsilon}_{Rz}+\int_{\mathbb{R}^{N}}V^{+}\sigma_{Rz}%
^{2}+o(\varepsilon_{Rz})\\
&  \leq\ell(\Gamma)\left\Vert \omega\right\Vert ^{2}+\varepsilon
_{Rz}+o(\varepsilon_{Rz}),\\
\mathbb{D}(\chi\sigma_{Rz}) &  \geq\ell(\Gamma)\mathbb{D}(\omega
)+b_{p}\varepsilon_{Rz}+o(\varepsilon_{Rz})-2\int_{\mathbb{R}^{N}}(1-\chi
^{p})\left(  \frac{1}{|x|^{\alpha}}\ast|\sigma_{Rz}|^{p}\right)  \sigma
_{Rz}^{p}\\
&  \geq\ell(\Gamma)\mathbb{D}(\omega)+b_{p}\varepsilon_{Rz}+o(\varepsilon
_{Rz}),
\end{align*}
where $b_{p}:=2(p-1)$ if $p>2$ and $b_{p}:=4$ if $p=2$. Consequently, since
$\left\Vert \omega\right\Vert ^{2}=\mathbb{D}(\omega)$ and $\varepsilon
_{Rz}\rightarrow0$ as $R\rightarrow\infty$ uniformly in $z$, Lemma \ref{razon}
insures that there exist $c_{1},R_{1}>0$ such that
\[
\dfrac{\Vert\chi\sigma_{Rz}\Vert_{V}^{2}}{\mathbb{D}(\chi\sigma_{Rz}%
)^{\frac{1}{p}}}\leq\frac{\ell(\Gamma)\left\Vert \omega\right\Vert
^{2}+\varepsilon_{Rz}+o(\varepsilon_{Rz})}{\left(  \ell(\Gamma)\mathbb{D}%
(\omega)+b_{p}\varepsilon_{Rz}+o(\varepsilon_{Rz})\right)  ^{\frac{1}{p}}}%
\leq\bigl(\ell(\Gamma)\left\Vert \omega\right\Vert ^{2}\bigr)^{\frac{p-1}{p}%
}-c_{1}\varepsilon_{Rz}%
\]
for $R\geq R_{1}$ and $z\in Z.$ Using Lemma \ref{inf2} we conclude that there
exist $c_{0},R_{0}>0$ and $\beta>1$ such that
\[
\dfrac{\Vert\chi\sigma_{Rz}\Vert_{V}^{2}}{\mathbb{D}(\chi\sigma_{Rz}%
)^{\frac{1}{p}}}\leq\bigl(\ell(\Gamma)\left\Vert \omega\right\Vert
^{2}\bigr)^{\frac{p-1}{p}}-c_{0}e^{-\beta R}\quad\text{for any \ }R\geq
R_{0},\text{ }z\in Z,
\]
which is inequality (\ref{lhs}). Finally, since $\pi(\chi\sigma_{Rz}%
)\in\mathcal{N}_{\Omega,V}^{\phi}$ and
\[
J_{V}(\pi(\chi\sigma_{Rz}))=\frac{p-1}{2p}\left(  \dfrac{\Vert\chi\sigma
_{Rz}\Vert_{V}^{2}}{\mathbb{D}(\chi\sigma_{Rz})^{\frac{1}{p}}}\right)
^{\frac{p}{p-1}}<\frac{p-1}{2p}\ell(\Gamma)\left\Vert \omega\right\Vert
^{2}=\ell(\Gamma)c_{\infty},
\]
one has that $c_{\Omega,V}^{\phi}<\ell(\Gamma)c_{\infty}.$
\end{proof}

\smallskip

\begin{proof}
[Proof of Theorem \ref{slna3}]Let $\phi\equiv1,$ so that $\Gamma=G.$ If
assumption $(V_{3})$ holds for $\kappa>\mu_{G},$ we choose $\zeta\in\Sigma$
such that $\mu(G\zeta)\in[\mu_{G},\kappa)$ and set $Z:=G\zeta.$ Thus $\mu
^{G}(Z)=\mu(G\zeta)$ and assumption $(V_{4})$ holds for $\kappa.$ Moreover,
since $\ell(G)\geq3$, $\mu^{G}(Z)=\mu(G\zeta)<2.$ Therefore $(Z_{\ast})$ holds
and we can apply Proposition \ref{sub} to these data to conclude that
$c_{\Omega,V}^{G}<\ell(G)c_{\infty}.$ Corollary \ref{corps}\ then insures that
$J_{V}$ satisfies condition $(PS)_{c}^{G}$ on $\mathcal{N}_{\Omega,V}^{G}$ for
$c:=c_{\Omega,V}^{G}.$ Consequently, there exists $u\in\mathcal{N}_{\Omega
,V}^{G}$ such that $J_{V}(u)=c_{\Omega,V}^{G}.$ Since $\left\vert u\right\vert
\in\mathcal{N}_{\Omega,V}^{G}$ and $J_{V}(\left\vert u\right\vert )=J_{V}(u),$
$\left\vert u\right\vert $ is a positive solution of (\ref{prob}) which is
$G$-invariant and satisfies $J_{V}(\left\vert u\right\vert )<\ell(G)c_{\infty
}.$
\end{proof}

\smallskip

\begin{proof}
[Proof of Theorem \ref{slna4}]If $\phi$ is an epimorphism and $(Z_{0})$ holds,
then $Z\subset\Sigma\smallsetminus\Sigma_{0}$ and $2>\frac{2}{a_{0}}\geq
\mu(Gz)=\mu(\Gamma z).$ Therefore, $\mu^{\Gamma}(Z)<2,$ and hence $(Z_{\ast})$
holds. We choose $R>R_{0}$ and set $d:=\frac{p-1}{2p}\bigl[\bigl(\ell
(\Gamma)\left\Vert \omega\right\Vert ^{2}\bigr)^{\frac{p-1}{p}}-c_{0}%
\varepsilon^{-\beta R}\bigr]^{\frac{p}{p-1}}.$ Proposition \ref{sub}\ then
asserts that the map $\sigma:Z\rightarrow\mathcal{N}_{\Omega,V}^{\phi}\cap
J_{V}^{d}$ given by $\sigma(z):=\pi(\chi\sigma_{Rz})$ is well defined.
Furthermore, $\sigma(gz)=\sigma(z)$ for all $g\in G$\ and\ $\sigma(\gamma
z)=-\sigma(z)$ if $\phi(\gamma)=-1.$ Consequently, $\sigma$ induces a
continuous map $\widehat{\sigma}:Z/G\rightarrow\mathcal{N}_{\Omega,V}^{\phi
}\cap J_{V}^{d},$ given by $\widehat{\sigma}(Gz):=\sigma(z),$ which satisfies
$\widehat{\sigma}((-1)\cdot Gz)=-\widehat{\sigma}(Gz)$ for all $z\in Z.$ This
implies that
\[
\text{genus}(Z/G)\leq\text{ genus}(\mathcal{N}_{\Omega,V}^{\phi}\cap J_{V}%
^{d}).
\]
Since $\mathcal{N}_{\Omega,V}^{\phi}$ is a $\mathcal{C}^{2}$-manifold and
$J_{V}:\mathcal{N}_{\Omega,V}^{\phi}\rightarrow\mathbb{R}$ is an even
$\mathcal{C}^{2}$-function which is bounded from below and satisfies condition
$(PS)_{c}^{\phi}$ on $\mathcal{N}_{\Omega,V}^{\phi}$ for all $c<\ell
(\Gamma)c_{\infty}$, we conclude that$\ J_{V}$ has at least genus$(Z/G)$ pairs
of critical points $\pm u$ with $J_{V}(u)\leq d.$
\end{proof}


\begin{thebibliography}{99}                                                                                               %
\bibitem {a}Ackermann, N.: \textit{On a periodic Schr\"{o}dinger equation with
nonlocal superlinear part}. Math. Z. \textbf{248} (2004), 423--443.

\bibitem {bl}Bahri, A. and Lions, P.-L.: \textit{On the existence of a
positive solution of semilinear elliptic equations in unbounded domains}. Ann.
Inst. H. Poincar\'{e} Anal. Non Lin\'{e}aire \textbf{14} (1997), 365--413.

\bibitem {b}Bartsch, T.: \textit{On the genus of representation spheres}.
Comment. Math. Helv. \textbf{65} (1990), 85--95.

\bibitem {cc}Cerami, G. and Clapp, M.: \textit{Sign changing solutions of
semilinear elliptic problems in exterior domains}. Calc. Var. Partial
Differential Equations \textbf{30} (2007), 353--367.

\bibitem {clm}Chossat, P., Lauterbach, R. and Melbourne, I.:
\textit{Steady-state bifurcation with O(3)-symmetry}. Arch. Rational Mech.
Anal. \textbf{113} (1990), 313--376.

\bibitem {ccs}Cingolani, S., Clapp, M. and Secchi, S.: \textit{Multiple
solutions to a magnetic nonlinear Choquard equation}. Z. angew. Math. Phys.
\textbf{63} (2012), 233--248.

\bibitem {ccs2}Cingolani, S., Clapp, M. and Secchi, S.: \textit{Intertwining
semiclassical solutions to a Schr\"{o}dinger-Newton system}. Discrete and
Continuous Dynamical Systems Series S, to appear.

\bibitem {css}Cingolani, S., Secchi, S. and Squassina, M.:
\textit{Semi-classical limit for Schr\"{o}dinger equations with magnetic field
and Hartree-type nonlinearities}. Proc. Roy. Soc. Edinburgh Sect. A
\textbf{140} (2010), 973--1009.

\bibitem {cs}Clapp, M. and Salazar, D.: \textit{Multiple sign changing
solutions of nonlinear elliptic problems in exterior domains}. Advanced
Nonlinear Studies \textbf{12} (2012), 427--443.

\bibitem {f}Fr\"{o}hlich, J. and Lenzmann, E.: \textit{Mean-field limit of
quantum Bose gases and nonlinear Hartree equation}. In: S\'{e}minaire
\'{E}quations aux D\'{e}riv\'{e}es Partielles 2003--2004, Exp. No. XIX, 26
pp., \'{E}cole Polytech., Palaiseau, 2004.

\bibitem {gl}Garofalo N. and Lin F.-H.: \textit{Unique continuation for
elliptic operators: a geometric-variational approach}. Comm. Pure Appl. Math.
\textbf{40} (1987), 347--366.

\bibitem {jk}Jerison, D. and Kenig, C.E.: \textit{Unique continuation and
absence of positive eigenvalues for Schr\"odinger operators}. Ann. of Math.
\textbf{121} (1985), 463--494.

\bibitem {lb}Lieb, E.H.: \textit{Existence and uniqueness of the minimizing
solution of Choquard's nonlinear equation}. Studies in Appl. Math. \textbf{57}
(1976/77), 93--105.

\bibitem {ll}Lieb, E.H. and Loss, M.: Analysis. Graduate Studies in Math
\textbf{14}. American Mathematical Society (1997).

\bibitem {ls}Lieb, E.H. and Simon, B.: \textit{The Hartree-Fock theory for
Coulomb systems}. Comm. Math. Phys. \textbf{53} (1977), 185--194.

\bibitem {ln}Lions, P.-L.: \textit{The Choquard equation and related
equations}. Nonlinear Anal. \textbf{4} (1980), 1063--1073.

\bibitem {mz}Ma, L. and Zhao, L.: \textit{Classification of positive solitary
solutions of the nonlinear Choquard equation}. Arch. Ration. Mech. Anal.
\textbf{195} (2010), 455--467.

\bibitem {m}Menzala, G.P.: \textit{On regular solutions of a nonlinear
equation of Choquard's type}. Proc. Roy. Soc. Edinburgh. Sect. A \textbf{86}
(1980), 291--301.

\bibitem {mp}Moroz, I.M., Penrose, R. and Tod, P.:
\textit{Spherically-symmetric solutions of the Schr\"odinger-Newton
equations}. Classical Quantum Gravity \textbf{15} (1998), 2733--2742.

\bibitem {mt}Moroz, I.M. and Tod, P.: \textit{An analytical approach to the
Schr\"odinger-Newton equations}. Nonlinearity \textbf{12} (1999), 201--216.

\bibitem {ms}Moroz, V. and van Schaftingen, J.: \textit{Groundstates of
nonlinear Choquard equations: existence, qualitative properties and decay
asymptotics}. Preprint arXiv:1205.6286v1.

\bibitem {n}Nolasco, M.: \textit{Breathing modes for the Schr\"odinger-Poisson
system with a multiple-well external potential}. Commun. Pure Appl. Anal.
\textbf{9} (2010), 1411--1419.

\bibitem {Palais}Palais, R.S.: \textit{The principle of symmetric
criticality}. Comm. Math. Phys. \textbf{69} (1979), 19--30.

\bibitem {p}Penrose, R.: \textit{On gravity's role in quantum state
reduction.} Gen. Rel. Grav. \textbf{28} (1996), 581--600.

\bibitem {s}Secchi, S.: \textit{A note on Schr\"odinger-Newton systems with
decaying electric potential}. Nonlinear Anal. \textbf{72} (2010), 3842--3856.

\bibitem {ww}Wei, J. and Winter, M.: \textit{Strongly interacting bumps for
the Schr\"odinger-Newton equations}. J. Math. Phys. \textbf{50} (2009), 22 pp.

\bibitem {Willem}Willem, M.: Minimax theorems. Progress in Nonlinear
Differential Equations and their Applications \textbf{24}, Birkh\"{a}user,
Boston, 1996.
\end{thebibliography}
\end{document}